\documentclass[letterpaper,11pt,reqno]{amsart}

\usepackage{amsmath,amssymb,amscd,amsthm,amsxtra}

\usepackage[implicit=true]{hyperref}
\usepackage{bbm}
\allowdisplaybreaks

\setlength{\pdfpagewidth}{8.50in}
\setlength{\pdfpageheight}{11.00in}

\usepackage{tikz}
\usetikzlibrary{matrix} 

\usepackage{enumitem}

\headheight=8pt
\topmargin=0pt
\textheight=624pt
\textwidth=432pt
\oddsidemargin=18pt
\evensidemargin=18pt

\allowdisplaybreaks[2]


\hfuzz  = 0.5cm 


\usepackage{color}

\newtheorem{theorem}{Theorem} [section]

\newtheorem{lemma}[theorem]{Lemma}
\newtheorem{proposition}[theorem]{Proposition}
\newtheorem{remark}[theorem]{Remark}

\newtheorem{definition}[theorem]{Definition}

\newtheorem*{ack}{Acknowledgments}


\DeclareMathOperator*{\intt}{\int}

\DeclareMathOperator*{\supp}{supp}

%
\newcommand{\I}{\hspace{0.5mm}\text{I}\hspace{0.5mm}}
\newcommand{\II}{\text{I \hspace{-2.8mm} I} }
\newcommand{\III}{\text{I \hspace{-2.9mm} I \hspace{-2.9mm} I}}
\newcommand{\IV}{\text{I \hspace{-2.8mm} V} }

\newcommand{\noi}{\noindent}
\newcommand{\Z}{\mathbb{Z}}
\newcommand{\R}{\mathbb{R}}
\newcommand{\C}{\mathbb{C}}
\newcommand{\T}{\mathbb{T}}

\newcommand{\1}{\mathbbm{1}}

\let\Re=\undefined\DeclareMathOperator*{\Re}{Re}
\let\Im=\undefined\DeclareMathOperator*{\Im}{Im}

\let\P= \undefined
\newcommand{\P}{\mathbb{P}}
\newcommand{\PP}{\mathbf{P}}

\newcommand{\N}{\mathcal{N}}
\newcommand{\NB}{\mathbb{N}}

\newcommand{\FL}{\mathcal{F}L} 

\newcommand{\F}{\mathcal{F}}

\newcommand{\Gg}{\mathcal{G}}

\def\norm#1{\|#1\|}

\newcommand{\al}{\alpha}
\newcommand{\be}{\beta}
\newcommand{\dl}{\delta}

\DeclareMathOperator{\haar}{m}

\newcommand{\eps}{\varepsilon}

\newcommand{\s}{\sigma}

\newcommand{\ft}{\widehat}
\newcommand{\Ft}{{\mathcal{F}}}
\newcommand{\wt}{\widetilde}

\newcommand{\cj}{\overline}
\newcommand{\dx}{\partial_x}
\newcommand{\dt}{\partial_t}

\newcommand{\embeds}{\hookrightarrow}

\newcommand{\conj}[1]{\overline{#1}}

\renewcommand{\l}{\ell}

\newcommand{\les}{\lesssim}
\newcommand{\ges}{\gtrsim}

\newcommand{\jb}[1]
{\langle #1 \rangle}

\usepackage{bbm}

\DeclareMathOperator{\Id}{Id}

\numberwithin{equation}{section}
\numberwithin{theorem}{section}


\usepackage{todonotes}

\title[Invariance of the Gibbs measures for gKdV]{Invariance of the Gibbs measures for periodic generalized Korteweg-de Vries equations}
\date{\today}

\begin{document}
\baselineskip = 14pt

\author[A.~Chapouto]{Andreia Chapouto}
\address{
	Andreia Chapouto\\ School of Mathematics\\
	The University of Edinburgh\\
	and The Maxwell Institute for the Mathematical Sciences\\
	James Clerk Maxwell Building\\
	The King's Buildings\\
	Peter Guthrie Tait Road\\
	Edinburgh\\ 
	EH9 3FD\\United Kingdom}
\email{andreia.chapouto@ed.ac.uk}

\author[N.~Kishimoto]{Nobu Kishimoto}
\address{
	Nobu Kishimoto\\
	Research Institute for Mathematical Sciences\\
	Kyoto University\\
	Kitashirakawa Oiwake-cho\\
	Sakyo-ku\\
	Kyoto\\
	606-8502\\
	Japan}
\email{nobu@kurims.kyoto-u.ac.jp}

\subjclass[2010]{35Q53}

\keywords{generalized KdV equations; Gibbs measure; a.s. global well-posedness}


\begin{abstract}
	
	In this paper, we study the Gibbs measures for periodic generalized Korteweg-de Vries equations (gKdV) with quartic or higher nonlinearities. In order to bypass the analytical ill-posedness of the equation in the Sobolev support of the Gibbs measures, we establish deterministic well-posedness of the gauged gKdV equations within the framework of the Fourier-Lebesgue spaces. Our argument relies on bilinear and trilinear Strichartz estimates adapted to the Fourier-Lebesgue setting. Then, following Bourgain's invariant measure argument, we construct almost sure global-in-time dynamics and show invariance of the Gibbs measures for the gauged equations. These results can be brought back to the ungauged side by inverting the gauge transformation and exploiting the invariance of the Gibbs measures under spatial translations. We thus complete the program initiated by Bourgain (1994) on the invariance of the Gibbs measures for periodic gKdV equations.

\end{abstract}

\maketitle

\tableofcontents

\section{Introduction}
We study the Cauchy problem for the generalized Korteweg-de Vries equation (gKdV) on the one-dimensional torus $\T=\R / \Z$:
\begin{equation}\label{gkdv}
\begin{cases}
\partial_t u + \partial_x^3 u = \pm \dx (u^{k}), \\
u\vert_{t=0} = u_0,
\end{cases}
\quad (t,x)\in\R\times\T,
\end{equation}
where $k\geq 2$ is an integer. When $k=2$ and $k=3$, \eqref{gkdv} corresponds to the well-known Korteweg-de Vries (KdV) and modified Korteweg-de Vries (mKdV) equations, respectively. These two equations are known to be completely integrable, therefore satisfying infinitely many conservation laws, which is no longer true for \eqref{gkdv} with $k\geq4$.

The gKdV equation \eqref{gkdv} can be reformulated as a Hamiltonian system
\begin{equation*}
\dt u = \dx \frac{\delta H}{ \delta u}, \label{hamiltonian}
\end{equation*}
where $\frac{\delta H}{\delta u}$ denotes the Fr\'echet derivative and the Hamiltonian  is given by
\begin{align*}
H(u) : = \frac12 \int_\T (\dx u)^2 \,dx \pm \frac{1}{k+1} \int_\T u^{k+1} \, dx.
\end{align*}
In particular, $H(u)$ is conserved under the dynamics of \eqref{gkdv}. Note that the mean $\intt_\T u \, dx$ and the mass $M(u) = \intt_\T u^2 \, dx$ are also conserved quantities. Due to the conservation of the mean, we will restrict our discussion to mean zero initial data. See Remark~\ref{rm:mean} for further details on the non-zero mean case.

In view of the Hamiltonian structure of gKdV \eqref{gkdv}, we expect the Gibbs measure $\mu$ formally defined by
\begin{equation}
d \mu = Z^{-1} e^{-H(u)} \, du  = Z^{-1} e^{\mp\frac{1}{k+1}\int_\T u^{k+1}dx} e^{-\frac12\int_\T (\partial_x u )^2 dx} du, \label{measure}
\end{equation}
to be invariant under the dynamics of gKdV \eqref{gkdv}. In this paper, we complete the program initiated by Bourgain in \cite{BO94} by establishing invariance of the Gibbs measure $\mu$ in \eqref{measure} (under suitable normalization) for any $k \geq4$. Our result builds upon the work of Bourgain for KdV ($k=2$) and mKdV ($k=3$) \cite{BO93, BO94}, and of Richards \cite{R} for quartic gKdV ($k=4$).

The construction of Gibbs measures for Hamiltonian PDEs was initiated by Lebowitz-Rose-Speer \cite{LRS} in the context of the nonlinear Schr\"{o}dinger equation and has since been successfully pursued for other equations, see \cite{BO94, BO96, BO97, Tz06, Tzv08, BurqTzv07, BurqTzv08, OhKdv09, OhSBO09, Tzv10, ThTzv10, NahOhBelletSta12, BurqThoTzv13, BoBul14, Deng15, R, ORT16, OT18, DNY19, ORSW20, DNY21, GOTW} and references therein. The expression in \eqref{measure} is only formal, but it can be made rigorous by interpreting the Gibbs measure $\mu$ as a probability measure which is absolutely continuous with respect to the Gaussian measure~$\rho$ 
\begin{equation}\label{gaussian}
d\rho = Z_0^{-1} e^{-\frac12\int_\T (\dx u)^2 dx} du.
\end{equation}
The measure $\rho$ can be seen as the induced probability measure under the map
\begin{equation}
\omega \mapsto u^\omega(x) = \sum_{n\in \Z_*} \frac{g_n(\omega)}{|n|} e^{inx}, \label{induced}
\end{equation}
where $\{g_n\}_{n\in\Z_*}$, $\Z_* = \Z\backslash  \{0\}$, is a sequence of complex-valued independent Gaussian random variables on a probability space $(\Omega, \mathcal{F}, \P)$, satisfying $g_{-n} = \conj{g_n}$. Note that $u$ defined in \eqref{induced} lies in $\bigcap_{s<\frac12} H^s(\T)$ almost surely. Consequently, the support of $\rho$ and of $\mu$ (when well-defined) is included in this set.

In order to discuss the invariance of the Gibbs measure $\mu$, we must first construct a (globally-in-time) well-defined flow for gKdV \eqref{gkdv} on the support of $\mu$. Before proceeding, we recall some known well-posedness results of \eqref{gkdv}. 
In \cite{BO93}, Bourgain introduced the Fourier restriction norm method and proved local well-posedness of KdV in $L^2(\T) \supset \supp\mu$, which was immediately extended to global well-posedness due to the conservation of mass. Following the same method, for mKdV Bourgain \cite{BO94} established its local well-posedness in $H^{s_1}(\T) \cap \FL^{s_2,\infty}(\T) \supset \supp \mu$ for some $s_1<\frac12 < s_2<1$, where $\FL^{s,p}(\T)$ denotes the Fourier-Lebesgue space defined through the norm
\begin{equation}
\| f\|_{\FL^{s,p}} = \| \jb{n}^s \ft{f}(n) \|_{\l^p_n}, \label{FL}
\end{equation}
where $\jb{\cdot} = (1 + |\cdot|^2)^\frac12$ and $\ft{f}(n)$ denotes the Fourier coefficient of $f$. In fact, it was shown by B\'enyi-Oh \cite{BO2011} that the function $u$ defined in \eqref{induced} lies in $ \FL^{s,p}(\T)$ almost surely if $(s-1)p<-1$, and thus these spaces used in \cite{BO94} also include the support of the measure~$\mu$.  Unfortunately, the conservation laws of mKdV were not sufficient to globalize solutions.%
	\footnote{After \cite{BO94}, local well-posedness for mKdV on $\T$ has been obtained in less regular Sobolev and Fourier-Lebesgue spaces including the support of the Gibbs measure.
Furthermore, the local-in-time flow constructed on these spaces has been globalized (in the deterministic sense) by exploiting complete integrability of mKdV. For details, see, e.g., the recent works \cite{Ch21,Ch} by the first author and references therein.}
	Instead, Bourgain used a probabilistic argument to construct global-in-time solutions of mKdV. In the seminal work \cite{BO94}, he exploited the invariance of the finite dimensional Gibbs measures corresponding to the truncated dynamics to globalize solutions of mKdV. Moreover, he rigorously established the invariance of the Gibbs measure $\mu$ for KdV and mKdV. 
Here, with the solution map $\Psi (t)$ of \eqref{gkdv} given at least almost surely with respect to $\mu$, invariance of $\mu$ is understood as 
\begin{align}\label{intro_invariance}
	\mu\big( \Psi(-t) A\big) = \mu(A),
\end{align}
for any measurable set $A$ and $t\in\R$.
This approach is known as Bourgain's invariant measure argument. The main breakthrough in \cite{BO94} was the globalization argument, in particular, using the formal invariance of the Gibbs measure $\mu$ as a substitute for a conservation law. 


Regarding \eqref{gkdv} with $k\geq4$, in \cite{BO93}, Bourgain proved local existence of solutions (without uniqueness) in $H^s(\T)$ for $s \geq 1$. Later, Staffilani \cite{ST} upgraded this result to local well-posedness in $H^s(\mathbb{T})$, $s\geq 1$ (and hence global well-posedness under the presence of a priori $H^1$-control), which extended the former local well-posedness result for $s>\frac32$ obtained by the classical approach not exploiting dispersion (see Kato's results \cite{K75,K79} on quasilinear hyperbolic systems).
In \cite{CKSTT04}, Colliander-Keel-Staffilani-Takaoka-Tao further refined the relevant nonlinear estimate in the Fourier restriction norm to establish local well-posedness in $H^s(\T)$ for $s\geq \frac12$ and used the $I$-method to construct global solutions for $s>\frac56$ and $k=4$ (see also \cite{HuLi13,BaoWu17}). 
The common strategy in \cite{BO93,ST,CKSTT04} is to study the following gauged gKdV equation ($\mathcal{G}$-gKdV):
\begin{equation}
\dt u + \dx^3 u = \pm \dx\big( u^{k} - k\PP_0 (u^{k-1})u   \big), \label{gauged}
\end{equation}
where $\PP_0$ denotes the mean $\PP_0(f) = \int_\T f \, dx$. In fact, compared with the original equation \eqref{gkdv}, certain problematic frequency interactions have been removed in the nonlinearity of the gauged equation \eqref{gauged}.
Note that the two equations \eqref{gauged} and \eqref{gkdv} are equivalent\footnote{The equivalence is obvious for smooth solutions. Even for $u\in C([-T,T];\mathcal{F}L^{s,p}(\T))$, we can show the equivalence when $s\geq 0$ and $s+\frac{1}{p}>1-\frac{1}{k}$. In fact, since the embedding $\mathcal{F}L^{s,p}(\T)\hookrightarrow L^k(\T)$ holds in this case, the gauge transformation \eqref{gauge} can be defined in the sense of spatial translation of an $L^k$-function for each $t$, and it is continuous on $C([-T,T];\mathcal{F}L^{s,p}(\T))$, as we will show in Lemma~\ref{lem:cont-G}. Note also that both of the nonlinearities $\pm \dx ( u^{k} )$ and $\pm \dx (u^k - k\PP_0 (u^{k-1})u)$ make sense as distributions in $C([-T,T];\mathcal{F}L^{-1-\frac{1}{p}-,p}(\T))$ in view of the embedding $L^1\hookrightarrow \mathcal{F}L^{-\frac{1}{p}-,p}(\T)$. Then, it is not hard to verify that $u\in C([-T,T];\mathcal{F}L^{s,p}(\T))$ is a mild solution of \eqref{gkdv} (i.e., $u$ satisfies the associated integral equation in $\mathcal{F}L^{-1-\frac{1}{p}-,p}(\T)$ pointwise in $t$) if and only if $v=\mathcal{G}_{0,t}u$ is a mild solution of \eqref{gauged}. In Theorem~\ref{th:lwp} below, we only consider $s,p$ within this range.} in the following sense: $u$ is a solution of \eqref{gkdv} if and only if $v = \mathcal{G}(u)$ is a solution of \eqref{gauged}, where the gauge transformation\footnote{Although $\mathcal{G}$ is a spatial translation only when considered as a map on space-time functions, we intend to follow the literature \cite{CKSTT04, R, ORT16} and call it a gauge transformation. We also refer to the transformed equation \eqref{gauged} as the gauged equation. See Remark~\ref{rm:DNLS}(ii) for further discussion.} $\mathcal{G} = \mathcal{G}_{0,t}$ is given by 
\begin{equation}\label{gauge}
 \mathcal{G}_{0,t} (u) (t,x) =  u\Big(t, x \mp k \int_0^t \PP_0\big( u^{k-1}(t') \big)  \, dt' \Big).
\end{equation}

Concerning the construction of invariant Gibbs dynamics, we first recall that the Gibbs measure is supported in $H^s(\T)$ for $s<\frac12$. However, this range is exactly where the (gauged) gKdV equation on $\T$ is known to be analytically ill-posed.
In fact, it has been shown in \cite{BO97,CCT03, CKSTT04} that the data-to-solution map fails to be $C^k$-continuous, which means that one cannot use a contraction mapping argument to construct the flow. 
To bypass this difficulty, Richards \cite{R}, following the argument in \cite{BO94, CO12}, established probabilistic local well-posedness of \eqref{gkdv} with $k=4$ in $H^s(\T)$ for $s<\frac12$, and proved invariance of the Gibbs measure under the flow of \eqref{gauged}. 
To the knowledge of the authors, the question of invariance of $\mu$ in the sense of \eqref{intro_invariance} for $k\geq 5$ remains open (see Remark~\ref{rm:ORT}(ii) for a result on invariance in a weaker sense).

The main difficulty in applying the probabilistic approach in \cite{R} to gKdV with higher order nonlinearities is that the number of cases involved in analyzing the nonlinearity increases with its degree. 
It may be possible to bypass this difficulty and implement a probabilistic well-posedness argument in a unified manner for $k\geq 4$ by adapting the method recently introduced by Deng-Nahmod-Yue in \cite{DNY19}, where they handled the nonlinear Schr\"odinger equations in dimension two with an arbitrarily high power. 
However, if we have deterministic well-posedness in the support of the Gibbs measure, we can obtain a better approximation property for the solutions, when compared to probabilistic methods. In fact, by establishing continuity of the solution map, any approximating sequence of initial data leads to a good approximating sequence for solutions of gKdV. The probabilistic methods above are usually restricted to particular approximations of initial data, such as those obtained by truncation to low frequencies.
In this paper, we shall take an approach based on deterministic well-posedness 
as in \cite{BO94} for the case $k=3$, namely we 
establish deterministic local well-posedness of $\mathcal{G}$-gKdV \eqref{gauged} in the Fourier-Lebesgue spaces containing the support of the Gibbs measure $\mu$, i.e., $\FL^{s,p}(\T)$ with $(s-1)p < -1$. 
It is worthwhile to observe that the counterexample to analytical well-posedness given in \cite{CKSTT04} also applies to the Cauchy problem in the Fourier-Lebesgue spaces $\FL^{s,p}(\T)$ for $s<\frac12$ and any $p$. Note that for $p=2$, the regularity criterion $(s-1)p<-1$ implies $s<\frac12$ for which a contraction argument does not work.
However, by choosing $p>2$, it is possible to take regularity $s$ satisfying both $(s-1)p<-1$ and $s>\frac12$. Such a choice guarantees that $\FL^{s,p}(\T)$ contains the support of the Gibbs measure while avoiding the counterexample in \cite{CKSTT04}.\footnote{This idea of bypassing some ill-posedness results in $L^2$-based Sobolev spaces by instead considering alternative spaces that contain the support of the Gibbs measure can be found in the literature; see, e.g., \cite{O09a,O09b,O10,FOW20}.} Hence, our first result is the following local well-posedness in $\FL^{s,p}(\T)$ for some $s,p$ which meet the above requirements. Indeed, the main difficulty in our approach lies in this step.
\begin{theorem}\label{th:lwp}
	For $2 < p < \infty$ there exists $\frac{1}{2}<s_*(p)<1-\frac{1}{p}$ such that $\mathcal{G}$-gKdV \eqref{gauged} is locally well-posed in $\FL^{s,p}(\T)$ for any $s>s_*(p)$. Moreover, by inverting the gauge transformation, we also obtain local well-posedness of the gKdV equation \eqref{gkdv} in $\FL^{s,p}(\T)$.
\end{theorem}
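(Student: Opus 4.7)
The plan is to run a Picard iteration on the Duhamel formulation of the gauged equation in a Bourgain-type restriction-norm space built on $\FL^{s,p}(\T)$. I would work in a space $X^{s,b}_p$ with norm
\[\|u\|_{X^{s,b}_p} = \bigl\|\jb{n}^s \jb{\tau - n^3}^b \ft{u}(\tau,n)\bigr\|_{L^{p'}_\tau \l^p_n}\]
(or an appropriate $U^p$/$V^p$-type variant), together with its restriction to a time interval $[-T,T]$. The linear theory (free evolution, Duhamel, small-time gain in the $b$-parameter) goes through once one has a Christ--Kiselev lemma in the $\l^p_n$ setting; the whole problem then reduces to proving a $k$-linear estimate of the shape
\[\bigl\|\dx\bigl(\textstyle\prod_{j=1}^k u_j - k\PP_0(\prod_{j=1}^{k-1}u_j)\,u_k\bigr)\bigr\|_{X^{s,b-1}_p} \les \prod_{j=1}^k \|u_j\|_{X^{s,b}_p},\]
for some $\tfrac12 < b < 1 - \tfrac{1}{p}$, with $s$ only slightly above $\tfrac12$. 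Because $p>2$, one has genuine room between $\tfrac12$ and $1-\tfrac1p$, and the threshold $s_*(p)$ will be dictated by the sharpness of this multilinear estimate.

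On the Fourier side the nonlinearity is a convolution sum over $n = n_1 + \cdots + n_k$ multiplied by $n$. Using the KdV resonance identity, which factors $n^3 - \sum_j n_j^3$ through sums of the pairwise frequencies, a frequency interaction is \emph{non-resonant} when the maximum modulation $\jb{\tau-n^3}$ or $\max_j \jb{\tau_j - n_j^3}$ is large compared with the product of the two highest frequencies, so that the derivative in $n$ is bought back through the modulation weights. The \emph{resonant} contributions are exactly those where two (or more) of the frequencies collapse to $\pm n$, and the counter-term $k\PP_0(u^{k-1})u$ has been tailored precisely to cancel these diagonal resonances. After gauging, the remaining interactions either satisfy $n_i + n_j = 0$ for some pair while still having sizeable modulation, or are generically non-resonant.

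The main obstacle, and the heart of the argument, is the $k$-linear estimate in the non-resonant regime; this is where the bilinear and trilinear Strichartz estimates adapted to the Fourier--Lebesgue setting (as advertised in the abstract) enter. I would establish building blocks of the form
\[\|u_1 u_2\|_{L^q_{t,x}} \les \|u_1\|_{X^{s_1,b}_p}\|u_2\|_{X^{s_2,b}_p}, \qquad \|u_1 u_2 u_3\|_{L^q_{t,x}} \les \prod_{j=1}^3\|u_j\|_{X^{s_j,b}_p},\]
with frequency-dependent constants reflecting the KdV transversality gain (analogues of the Kenig--Ponce--Vega / Bourgain bilinear estimate) together with divisor-type counting and the Hausdorff--Young embedding $\FL^{0,p'}\hookrightarrow L^p$. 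Dyadically decomposing with $N_1\ge N_2\ge\cdots\ge N_k$, I would pair the two (or three) highest-frequency factors through a bilinear (resp.\ trilinear) Strichartz estimate, treat the remaining factors by Sobolev-type embeddings in $\FL^{s,p}$, and sum the dyadic pieces. The condition $s>s_*(p)$ emerges as the regularity threshold needed to absorb the derivative from $\dx$ and to close the dyadic summation. The case $k\ge 5$ is expected to be the most delicate because the number of frequency configurations grows with $k$, but the scheme of always discharging the two largest frequencies through a bilinear Strichartz, and (when available) the third through a trilinear one, should organize the case analysis uniformly.

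Once the multilinear estimate is in place, the contraction mapping theorem delivers local well-posedness of $\mathcal{G}$-gKdV~\eqref{gauged} in $\FL^{s,p}(\T)$ together with continuous dependence. To transfer the result to the ungauged equation \eqref{gkdv}, I appeal to the equivalence recalled in the footnote preceding the theorem: for $s,p$ in the stated range one has $\FL^{s,p}(\T)\hookrightarrow L^k(\T)$, so that $\PP_0(u^{k-1})$ is well defined, and the gauge transformation $\mathcal{G}_{0,t}$, being a time-dependent spatial translation by a quantity continuous in $u\in C([-T,T];\FL^{s,p}(\T))$ (Lemma~\ref{lem:cont-G}), is a homeomorphism on that space. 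Inverting it therefore transports local well-posedness from the gauged equation back to \eqref{gkdv}, finishing the proof.
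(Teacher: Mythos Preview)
Your overall architecture---contraction in a Fourier restriction space, reduction to a $k$-linear estimate, bilinear/trilinear Strichartz building blocks, then undoing the gauge---matches the paper's. But two concrete points separate your sketch from what actually makes the argument close.

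First, and most importantly, you write that the ``KdV resonance identity\ldots factors $n^3-\sum_j n_j^3$ through sums of the pairwise frequencies.'' This is exactly what \emph{fails} for $k\ge 4$: the phase function $\phi_k(n,n_1,\ldots,n_k)=n^3-n_1^3-\cdots-n_k^3$ does not factorize, and the paper states this explicitly. The replacement is Lemma~\ref{lm:resonance}, which compares $\phi_k$ to the KdV phase $\phi_2$ and the mKdV phase $\phi_3$ (which \emph{do} factor via \eqref{factor1}--\eqref{factor2}) and shows that either $|\phi_k|$ dominates a product like $|n_1|^2|n-n_1|$ or else this product is controlled by lower frequencies such as $|n_2n_3n_4|$. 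This dichotomy drives the entire case analysis (Cases~1--6 in Section~3.3); without it you have no mechanism to recover the derivative in the genuinely resonant regions where $|\phi_k|$ is small but no pair of frequencies collapses. Your description of the resonant set (``two or more frequencies collapse to $\pm n$'') captures only the part removed by the gauge; the hard remaining contribution $\mathcal{N}_0$ lives precisely where all the non-collapse conditions hold yet $\phi_k$ can still be small.

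Second, your function space $\|\jb{n}^s\jb{\tau-n^3}^b\ft u\|_{L^{p'}_\tau\ell^p_n}$ is not what the paper uses. The paper works in $Z^{s,\frac12}_p=X^{s,\frac12}_{p,2}\cap X^{s,0}_{p,1}$ with $\ell^p_nL^2_\tau$ (and $\ell^p_nL^1_\tau$) norms; the $L^2_\tau$ structure is what makes the bilinear/trilinear Strichartz estimates \eqref{new_l4}, \eqref{bilinear_new}, \eqref{l6-p} take the form $\|\cdot\|_{X^{0,0}_{p,2}}\lesssim\prod\|\cdot\|_{X^{\cdot,b}_{\cdot,2}}$ rather than $L^q_{t,x}$ bounds. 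Your proposed $L^q_{t,x}$ Strichartz blocks would need to be reformulated accordingly, and the Christ--Kiselev step you mention is not how the Duhamel estimate is handled here (see Lemma~\ref{lm:linear}). These are fixable choices, but the absence of a substitute for Lemma~\ref{lm:resonance} is the real gap.
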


\begin{remark}
	\rm 
	We start by clarifying our notion of local well-posedness of $\mathcal{G}$-gKdV \eqref{gauged} in $\FL^{s,p}(\T)$: for any $u_0\in\FL^{s,p}(\T)$ there exists $T=T(\|u_0\|_{\FL^{s,p}})>0$ and a unique solution $u$ in $X^{s,\frac12}_{p,2}(T) \cap X^{s,0}_{p,1}(T) \embeds C\big([-T,T]; \FL^{s,p}(\T)\big)$ (see Definition~\ref{def:xsb}) which satisfies the Duhamel formulation of \eqref{gauged}:
	\begin{align*}
	u(t) = S(t) u_0 \pm \int_0^t S(t-t') \dx \big(u^k - k \PP_0(u^{k-1})u\big) (t') \, dt', \quad t\in [-T,T], 
	\end{align*}
	where $S(t)$ denotes the linear propagator. Moreover, the data-to-solution map $\Phi$ is (locally Lipschitz) continuous. 
	 Note that $\mathcal{G}_{0,t}$ is a bijection on $C\big([-T,T];\FL^{s,p}(\T)\big)$ with inverse given by
	$$\mathcal{G}^{-1}_{0,t} (u) (t,x) = u\Big( t, x \pm k \int_0^t \PP_0 \big( u^{k-1}(t') \big) \, dt' \Big).$$
	Consequently, Theorem~\ref{th:lwp} asserts the following notion of local well-posedness for the original gKdV equation \eqref{gkdv}: for any $u_0\in\FL^{s,p}(\T)$ there exist $T=T(\|u_0\|_{\FL^{s,p}})>0$ and a unique solution $u \in \mathcal{G}^{-1}_{0,t} \big( X^{s,\frac12}_{p,2}(T) \cap X^{s, 0}_{p,1}(T)\big) \subset C\big([-T,T]; \FL^{s,p}(\T) \big)$ which satisfies the Duhamel formulation of \eqref{gkdv}.
	The data-to-solution map $\Psi$ of gKdV \eqref{gkdv} can be defined as $\Psi(t) = \mathcal{R}_t \circ \mathcal{G}^{-1}_{0,t} \circ \Phi$, where $\mathcal{R}_t$ denotes the evaluation map at time $t$.
	The map $\Psi(t)$ is defined on a neighborhood of the origin in $\FL^{s,p}(\T)$ and it is continuous, but not Lipschitz or uniformly continuous in the $\FL^{s,p}$ topology due to the properties of $\mathcal{G}^{-1}_{0,t}$. Moreover, it satisfies the group property $\Psi(t+s) = \Psi(t) \Psi(s)$ for any $t,s$. See Appendix~\ref{ap:gauge} for more details on this map.

\end{remark}

\begin{remark}\label{rm:sp}
\rm
The lower bound $s_*(p)$ in Theorem~\ref{th:lwp} is the same as that for the main multilinear estimates in Proposition~\ref{prop:nonlinear}. We see from the proof of Proposition~\ref{prop:nonlinear} that $s_*(p)$ can be chosen as
\[ s_*(p)=1-\frac{1}{p}-\frac{\min (p-2,2)}{\max (2(k-1)p,8p)}.\]
This lower bound is not likely to be sharp; indeed, the critical exponents $s,p$ suggested by the scale invariance of the equation satisfy $s=1-\frac{1}{p}-\frac{2}{k-1}$.
It may be possible to improve the range of $s$ by adapting the method of \cite{DNY21, Ch}, for instance.
However, in this paper, we do not intend to determine the optimal range of $s$ and $p$ for local well-posedness, since our focus is on constructing a global-in-time flow on the support of the Gibbs measure.
\end{remark}

We prove Theorem~\ref{th:lwp} by applying the Fourier restriction norm method with the $X^{s,b}$ spaces adapted to the Fourier-Lebesgue setting (see Definition~\ref{def:xsb}). The method reduces to establishing a fundamental nonlinear estimate, where the main difficulty lies in controlling the derivative in the nonlinearity. To overcome this derivative loss, we want to exploit the multilinear dispersion by analyzing the phase function
\begin{equation*}
\phi_k(n,n_1, \ldots, n_k) = n^3 - n_1^3 - \ldots - n_k^3
\end{equation*}
on the hyperplane $n=n_1+\ldots+n_k$. For KdV ($k=2$) and mKdV ($k=3$), the corresponding phase functions $\phi_2$ and $\phi_3$ are known to factorize, providing an explicit characterization of the resonant set, where $\phi_k(n,n_1,\ldots,n_k) =0$. Unfortunately, such factorizations are no longer available for $\phi_k$ when $k\geq 4$, complicating the study of the resonant frequency regions. In fact, the failure of analyticity of the solution map in $H^s(\T)$ and $\FL^{s,p}(\T)$ with $s<\frac12$ in \cite{CKSTT04} is due to the failure of the corresponding nonlinear estimate in the region where $|\phi_k(n,n_1, \ldots, n_k) | \ll \max(|n_1|, \ldots, |n_k|)$. Our approach is inspired by the ``bilinear+multilinear'' strategy in the work of Colliander-Keel-Staffilani-Takaoka-Tao \cite{CKSTT03, CKSTT04}. Instead of starting by showing a bilinear estimate, we first pursue a more careful description of the frequency space by comparing $\phi_k(n,n_1, \ldots,n_k)$ with the phase functions $\phi_2(n,n_1,n-n_1)$ and $\phi_3(n,n_1,n_2,n-n_1-n_2)$ associated with KdV and mKdV, respectively. 
Moreover, we further exploit the multilinear dispersion in the form of bilinear and trilinear Strichartz estimates, which are Fourier-Lebesgue analogues of the periodic $L^4$- and $L^6$-Strichartz estimates, respectively. 
The idea of multilinearizing periodic Strichartz estimates has been used in $L^2$-based Sobolev spaces; see \cite{CKSTT04} for gKdV, \cite{HerrTataruTzv2011} for the nonlinear Schr\"odinger (NLS) equation on $\T^3$ and \cite{GH} for the derivative NLS on $\T$, for example.

	Before discussing its invariance, we must guarantee that the Gibbs measure $\mu$ is a well-defined probability measure on $\FL^{s,p}(\T)$. In particular, we need the weight $e^{\mp \frac{1}{k+1}\int_\T u^{k+1} dx}$ to be integrable with respect to the Gaussian measure $\rho$ in \eqref{gaussian}. 
	 In the defocusing case, `$+$' sign in \eqref{gkdv} and odd $k \geq 3$, it follows from the Sobolev embedding that $\mu$ is a well-defined probability measure on $\FL^{s,p}(\T)$ for $1\leq p \leq \infty$ and $s \in (1 - \frac1p - \frac{1}{k+1}, 1 - \frac1p)$. 
	 However, in the non-defocusing case, `$-$' sign in \eqref{gkdv} or even $k\geq 2$, the quantity $e^{\mp \frac{1}{k+1}\int_\T u^{k+1} dx}$ is unbounded on $\FL^{s,p}(\T)$ and the measure \eqref{measure} is not normalizable. To bypass this difficulty, Lebowitz-Rose-Speer \cite{LRS} and Bourgain \cite{BO94} introduced a mass cutoff and studied the following Gibbs measure instead
	 \begin{equation}
	 	d\mu = Z^{-1} \1_{\{\|u\|_{L^2} \leq R\}} e^{-H(u)} du.\label{mu_focusing}
	 \end{equation}
	They showed that the measure $\mu$ in \eqref{mu_focusing} is only normalizable for $1\leq k \leq 5$ and an appropriate choice of $R$. The normalizability at the optimal threshold for $k=5$ was recently shown by Oh-Sosoe-Tolomeo \cite{OhSosoeTolomeo}. See Theorem~\ref{th:focusing} for more details.

Following the strategy in \cite{BO94}, we start by proving the invariance of the Gibbs measures associated with the following truncated dynamics
\begin{equation}\label{truncated}
\begin{cases}
\dt u_N + \dx^3 u_N = \pm \PP_{\leq N} \dx \big( (\PP_{\leq N} u_N)^{k} - k \PP_0\big((\PP_{\leq N} u_N)^{k-1}\big) \PP_{\leq N} u_N \big),\\
u_N\vert_{t=0} =  u_0,
\end{cases}
\end{equation}
where $\PP_{\leq N}$ denotes the Dirichlet projection onto frequencies $\{|n| \leq N\}$. 
Unfortunately, the Hamiltonian structure of \eqref{truncated} is disrupted by the gauge transformation.
Therefore, the invariance of the corresponding Gibbs measures does not follow immediately from Liouville's Theorem. A similar difficulty was found by Nahmod-Oh-Rey-Bellet-Staffilani when studying the Gibbs measure for derivative nonlinear Schr\"odinger equation in \cite{NahOhBelletSta12}. See Remark~\ref{rm:DNLS} for additional details. As a consequence, we must establish conservation of the mass and of the Hamiltonian for \eqref{truncated} as well as the invariance of the finite dimensional Lebesgue measures under the flow of \eqref{truncated}. Then, using the invariance of the finite dimensional Gibbs measures for \eqref{truncated}, we extend solutions of \eqref{gauged} globally-in-time and also establish the invariance of $\mu$ under its flow.

\begin{theorem}\label{th:invariance}
	Assume one of the following conditions:
	
	\noi {\rm(a)} defocusing case: `$+$' sign in \eqref{gkdv} and $k$ odd;

	\noi {\rm(b)} non-defocusing case: `$+$' sign in \eqref{gkdv} and $k=4$, or `$-$' sign in \eqref{gkdv} and $3 \leq k \leq 5$, with mass $0<R\leq \|Q\|_{L^2(\R)}$ if $k=5$ and $0<R<\infty$ otherwise. Here, $Q$ denotes the (unique) optimizer of the Gagliardo-Nirenberg-Sobolev inequality on $\R$ \eqref{gagliardo-nirenberg} with $\|Q\|_{L^6(\R)}^6 = 3 \|\dx Q\|_{L^2(\R)}^2$.

	\noi Then, the $\Gg$-gKdV equation \eqref{gauged} is almost surely globally well-posed with respect to the Gibbs measure $\mu$ defined by \eqref{measure} for the case (a) or \eqref{mu_focusing} for the case (b): More precisely, for $2<p<\infty$, there exists a $\mu$-measurable set $\Sigma\subset \bigcap_{s<1-\frac1p}\FL^{s,p}(\T)$ of full $\mu$-measure such that for every $u_0\in\Sigma$, the $\mathcal{G}$-gKdV equation \eqref{gauged} with initial data $u_0$ has a uniquely defined global-in-time solution $u\in \bigcap_{s<1-\frac1p}C\big(\R; \FL^{s,p}(\T)\big)$. The obtained solution map $\Phi(t):u_0\mapsto u(t)$ for $\mathcal{G}$-gKdV defined on $\Sigma$ is $\mu$-measurable and satisfies the flow property
	\begin{equation}\label{flowproperty}
	\Phi(t)\Sigma = \Sigma~ \text{for all $t\in\R$,}\quad \Phi(t+s)=\Phi (t)\Phi(s) ~\text{for all $t,s\in \R$.}
	\end{equation}
Moreover, the Gibbs measure $\mu$ is invariant under the flow of $\mathcal{G}$-gKdV \eqref{gauged} in the sense that $\mu \big( \Phi(-t)A\big) =\mu (A)$ for any $\mu$-measurable set $A\subset \Sigma$ and $t\in \R$.
\end{theorem}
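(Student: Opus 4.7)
The proof follows Bourgain's invariant measure scheme. Consider the truncated equation \eqref{truncated}, which decouples into the linear Airy flow on frequencies $\{|n|>N\}$ and a finite-dimensional ODE on $\C^{2N}$ for the low-frequency component $v_N := \PP_{\leq N}u_N$. Let $\mu_N$ denote the truncated Gibbs measure obtained by replacing $H(u)$ (and, in the non-defocusing case, the mass cutoff $\1_{\{\|u\|_{L^2}\leq R\}}$) with its low-frequency counterpart $H(v_N)$ (respectively $\1_{\{\|v_N\|_{L^2}\leq R\}}$). Because the gauge destroys the canonical Hamiltonian form, invariance of $\mu_N$ must be verified directly rather than via Liouville's theorem. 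I would check three facts: (i) the truncated vector field on $\C^{2N}$ is divergence-free, since the dispersion, the projected power $\pm\PP_{\leq N}\dx(v_N^k)$, and the transport-type gauge correction $\mp k\PP_0(v_N^{k-1})\dx v_N$ are each volume-preserving; (ii) $M(v_N)$ is conserved, as both the truncated power and the spatially-constant transport vanish after integration by parts; (iii) $H(v_N)$ is conserved, using self-adjointness of $\PP_{\leq N}$ and its commutativity with $\dx$ to cancel the dispersion against the truncated nonlinear contribution, together with the observation that the transport term contributes zero against both $-\dx^2 v_N$ and $\pm v_N^k$. Combined with invariance of the cylindrical Gaussian on high frequencies under the Airy flow, this yields $\mu_N$-invariance of the truncated solution map $\Phi_N(t)$.

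Next, I would upgrade Theorem~\ref{th:lwp} to a version with existence time $\tau = \tau(\|u_0\|_{\FL^{s,p}})$ and Lipschitz constants independent of $N$ (sharp Fourier cutoffs are harmless for the multilinear estimates of Proposition~\ref{prop:nonlinear}), and establish a quantitative stability estimate controlling $\|\Phi_N(t)u_0 - \Phi(t)u_0\|_{\FL^{s,p}}$ by $\|(\Id-\PP_{\leq N})u_0\|_{\FL^{s,p}}$ plus an $N^{-\delta}$ loss. Globalization then follows the standard probabilistic scheme: fix $T>0$ and $\eps>0$, and use Gaussian tail bounds (uniform in $N$) together with integrability of the truncated densities to pick $A$ with $\mu_N\bigl(\|u_0\|_{\FL^{s,p}}>A\bigr) \leq \eps\tau(A)/(2T)$. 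By invariance of $\mu_N$, iterating over $\lceil T/\tau(A)\rceil$ local steps shows that the set $E_{N,T,\eps} := \{u_0:\sup_{|t|\leq T}\|\Phi_N(t)u_0\|_{\FL^{s,p}}>A\}$ has $\mu_N$-measure at most $\eps$. The stability estimate then transfers the same bound to $\Phi(t)$ and $\mu$. Taking $\eps_n$ summable and $T_n\to\infty$, Borel--Cantelli produces the set $\Sigma$ of full $\mu$-measure on which $\Phi(t)$ extends globally; the flow property \eqref{flowproperty} follows from deterministic uniqueness, and passing to the limit in $\mu_N(\Phi_N(-t)A) = \mu_N(A)$ through the stability together with $\mu_N\to \mu$ (in total variation in the defocusing case; via $L^q(d\rho)$-convergence of densities in the non-defocusing case) yields $\mu$-invariance of $\Phi(t)$.

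The two main obstacles I anticipate are as follows. First, the clean verification of $\mu_N$-invariance in spite of the broken Hamiltonian form: the transport term $\mp k\PP_0(v_N^{k-1})\dx v_N$ must be treated separately from the Hamiltonian part, and the argument requires careful use of the commutativity and self-adjointness of $\PP_{\leq N}$ in the conservation of $H(v_N)$. Second, the construction of a stability estimate $\Phi_N\to\Phi$ in $\FL^{s,p}$ that is quantitative and robust on large balls, so as to transfer probabilistic bounds from $\mu_N$ to $\mu$ at the globalization step; the fact that our local theory is deterministic with Lipschitz dependence should make this tractable, but one must carefully couple it with the right mode of convergence $\mu_N\to\mu$, especially in the non-defocusing regime where the indicator $\1_{\{\|v\|_{L^2}\leq R\}}$ introduces a discontinuity that is standard in the Gibbs construction for focusing NLS but requires verification here.
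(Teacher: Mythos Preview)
Your outline matches the paper's argument closely: the paper also verifies invariance of $\mu_N$ by checking divergence-freeness, mass conservation, and Hamiltonian conservation for the low-frequency ODE (Proposition~\ref{inv_trunc}), proves a uniform local theory and a quantitative approximation lemma between $\Phi_N$ and $\Phi$ (Lemmas~\ref{lm:uniform} and~\ref{lm:approx}), and then runs the Bourgain iteration and a Borel--Cantelli construction of $\Sigma$. Two points where your sketch is thinner than what the proof actually needs:

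\textbf{(i) The approximation lemma uses two regularities.} The stability estimate $\|\Phi(t)u_0 - \PP_{\leq N}\Phi_N(t)u_0\|_{\FL^{s,p}} \les N^{-(\sigma-s)}$ requires a priori control of the orbit in a \emph{strictly stronger} norm $\FL^{\sigma,p}$ with $s<\sigma<1-\tfrac1p$; one cannot close the estimate at a single regularity. Correspondingly, the paper builds $\Sigma$ from a nested sequence $s_j\nearrow 1-\tfrac1p$, so that the globalization at level $s_j$ feeds off the bound at level $s_{j+1}$. This is also what yields the conclusion $u\in\bigcap_{s<1-\frac1p}C(\R;\FL^{s,p})$ and is needed for the flow property $\Phi(t)\Sigma=\Sigma$ (the paper shows $\Phi(\tau)\Sigma^{s_j}_{T_j,\eps_j}\subset\Sigma^{s_i}_{T_i,\eps_i}$ by combining Lemma~\ref{lm:approx}(b) with Lemma~\ref{lm:uniform}(c)). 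Your ``$N^{-\delta}$ loss'' is the right idea, but be explicit that $\delta=\sigma-s>0$ comes from bootstrapping between two scales.

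\textbf{(ii) The threshold $k=5$, $R=\|Q\|_{L^2(\R)}$.} Your invariance step relies on $L^q(d\rho)$-convergence of the truncated densities with $q>1$, which is exactly what fails at the threshold: Theorem~\ref{th:focusing}(c) only gives $F\in L^1(d\rho)$. The paper treats this case separately (Appendix~\ref{ap:threshold}) by first proving the full result for $R-\delta$ with $\delta>0$, observing that the resulting sets $\Sigma_\delta$ are increasing, and then passing to the limit $\delta\to 0$ via dominated convergence of $F_\delta\to F_0$ in $L^1(d\rho)$. As written, your argument covers (a) and the strict sub-threshold part of (b) but leaves this endpoint open.

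A minor stylistic difference: to pass to the limit in the invariance statement the paper tests against bounded continuous $G$ rather than indicators of sets, precisely so that the pointwise convergence $\Phi_N(t)u_0\to\Phi(t)u_0$ (on a large set) can be used directly; your formulation with sets is equivalent but would need an extra approximation step.
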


By inverting the gauge transformation and exploiting the invariance of the Gibbs measure under spatial translations, we obtain our main result.
\begin{theorem}\label{th:invariance_gkdv}
	Under the assumptions of Theorem~\ref{th:invariance}, for every $u_0$ in the set $\Sigma$ of full $\mu$-measure given in Theorem~\ref{th:invariance}, the gKdV equation \eqref{gkdv} with initial data $u_0$ has a uniquely defined global-in-time solution $u\in \bigcap_{s<1-\frac1p}C \big(\R; \FL^{s,p}(\T)\big)$. Moreover, the obtained solution map $\Psi (t)$ has the same flow property as in \eqref{flowproperty}, and the Gibbs measure $\mu$ is invariant under $\Psi(t)$ in the sense that \eqref{intro_invariance} holds for any $\mu$-measurable set $A\subset \Sigma$ and $t\in \R$.
\end{theorem}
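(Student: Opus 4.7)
The plan is to transfer both global well-posedness and invariance from the gauged equation (Theorem~\ref{th:invariance}) back to \eqref{gkdv} via the inverse gauge transformation $\mathcal{G}_{0,t}^{-1}$, making crucial use of the fact that the Gaussian measure $\rho$ (and hence $\mu$) is invariant under every spatial translation $\mathcal{T}_a:f(\cdot)\mapsto f(\cdot+a)$, $a\in\T$.

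For $u_0\in\Sigma$, let $v=\Phi(\cdot)u_0$ be the global $\mathcal{G}$-gKdV solution and set
\[ \tau(t,u_0) := \pm k\int_0^t \PP_0\big((\Phi(t')u_0)^{k-1}\big)\,dt',\qquad \Psi(t)u_0:=\mathcal{T}_{\tau(t,u_0)}\Phi(t)u_0. \]
Since $\FL^{s,p}\hookrightarrow L^{k-1}$ in the range under consideration, $\tau(\cdot,u_0)$ is continuous in $t$; and because spatial translations act continuously on $\FL^{s,p}(\T)$ (Lemma~\ref{lem:cont-G}), $\Psi(\cdot)u_0\in\bigcap_{s<1-1/p}C(\R;\FL^{s,p}(\T))$. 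By construction $\Psi(t)u_0=(\mathcal{G}_{0,t}^{-1}v)(t,\cdot)$, so the equivalence of mild formulations in the footnote after \eqref{gauge} shows that $\Psi(t)u_0$ is a mild solution of \eqref{gkdv}, and uniqueness transfers from the gauged side.

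The flow property $\Psi(t+s)=\Psi(t)\Psi(s)$ rests on two structural observations: (i) \eqref{gauged} is translation invariant, so $\Phi(t)\mathcal{T}_a=\mathcal{T}_a\Phi(t)$; and (ii) $\PP_0(w^{k-1})$ is translation invariant in $w$, so $\tau(t,\mathcal{T}_aw)=\tau(t,w)$. A short computation using the semigroup property of $\Phi$ then yields $\tau(t,\Phi(s)u_0)=\tau(t+s,u_0)-\tau(s,u_0)$, from which $\Psi(t)\Psi(s)u_0=\Psi(t+s)u_0$ follows. Since the defining property of $\Sigma$ (global existence of the $\mathcal{G}$-gKdV mild solution) is translation invariant, we may take $\Sigma$ translation invariant, giving $\Psi(t)\Sigma=\Sigma$. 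Measurability of $\Psi(t)$ is inherited from that of $\Phi$ and $\tau$.

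The main step, and the main obstacle, is invariance. Writing $\tau(t,u_0)=\tilde\tau(t,\Phi(t)u_0)$ with $\tilde\tau(t,v):=\pm k\int_{-t}^0 \PP_0\big((\Phi(\sigma)v)^{k-1}\big)\,d\sigma$, and using $\Phi(t)$-invariance of $\mu$, for any measurable $A\subset\Sigma$ one has
\[ \mu\big(\Psi(-t)A\big) = \int \1_A\big(\mathcal{T}_{\tau(t,u_0)}\Phi(t)u_0\big)\,d\mu(u_0) = \int \1_A\big(\mathcal{T}_{\tilde\tau(t,v)}v\big)\,d\mu(v). \]
The difficulty is that the shift $\tilde\tau(t,v)$ depends on $v$, so translation invariance of $\mu$ cannot be applied pointwise. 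I would use the following averaging trick: for any fixed $b\in\T$, translation invariance of $\mu$ combined with $\tilde\tau(t,\mathcal{T}_{-b}v)=\tilde\tau(t,v)$ yields
\[ \int \1_A\big(\mathcal{T}_{\tilde\tau(t,v)}v\big)\,d\mu(v) = \int \1_A\big(\mathcal{T}_{\tilde\tau(t,v)-b}v\big)\,d\mu(v). \]
Averaging over $b\in\T$ (unit Lebesgue mass) and applying Fubini, for each fixed $v$ the change of variables $c=\tilde\tau(t,v)-b$ on $\T$ turns the inner integral into $\int_\T \1_A(\mathcal{T}_c v)\,dc$, eliminating the $v$-dependence of the shift. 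A final use of translation invariance of $\mu$ reduces the expression to $\mu(A)$, establishing \eqref{intro_invariance}. Measurability of $\tilde\tau$ and legitimacy of the Fubini exchange are routine once the two structural invariances above are in hand.
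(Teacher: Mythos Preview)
Your proposal is correct and takes essentially the same approach as the paper: both arguments exploit the invariance of $\mu$ under spatial translations together with an averaging over the Haar measure on $\T$ and Fubini's theorem to absorb the solution-dependent shift $\tau(t,u_0)$, reducing to the already-established invariance under $\Phi(t)$. The only cosmetic difference is the order of operations---you first change variables via $\Phi$-invariance and then introduce the extra translation parameter, whereas the paper introduces the translation average first and applies $\Phi$-invariance at the end---but the key structural ingredients (commutation of $\Phi$ with translations, translation invariance of $\PP_0(w^{k-1})$ and of $\mu$, and the Haar-measure change of variables) are identical.
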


\begin{remark}\rm\label{rm:ORT}
	
	(i) Theorem~\ref{th:invariance_gkdv} extends the results of Bourgain \cite{BO94} and Richards \cite{R} on the invariance of the Gibbs measure $\mu$. Our work establishes the first result on the invariance of the Gibbs measure $\mu$ in the sense of \eqref{intro_invariance} for large values $k\geq 5$.
	
	\noi(ii) A weaker notion of invariance of $\mu$ for $k\geq 5$ was established by Oh-Richards-Thomann in \cite{ORT16}. They constructed almost sure global dynamics for gKdV \eqref{gkdv}, without uniqueness, and established invariance in the following sense: for any $t\in\R$, the law $\mathcal{L}\big(u(t)\big)$ of the random variable $u(t)$ which solves \eqref{gkdv} is given by the Gibbs measure $\mu$.
	They followed the compactness argument introduced by Burq-Thomann-Tzvetkov \cite{BTT18}, exploiting the invariance of the truncated measures to construct a tight sequence of space-time measures. Although their result can be easily extended to the Fourier-Lebesgue spaces in Theorem~\ref{th:invariance}, we do not know if our solutions coincide with those in \cite{ORT16}. Due to the lack of uniqueness of solutions in \cite{ORT16} and the conditional uniqueness of our result, we cannot directly compare these solutions.
	
	\noi (iii) In the optimal threshold for the non-defocusing case, i.e., when $k=5$ with `$-$' sign in \eqref{gkdv} and $R = \|Q\|_{L^2(\R)}$, we can still realize the Gibbs measure $\mu$ as a weighted Gaussian measure. However, we do not have the $L^q(d\rho)$-integrability of the corresponding density for $q>1$
	, which prevents us from directly applying Bourgain's invariant measure argument. As mentioned in \cite{OhSosoeTolomeo}, this difficulty can be bypassed by using the corresponding result for $R=\|Q\|_{L^2(\R)} - \dl$, $\dl>0$, and the dominated convergence theorem as $\dl\to0$. See Appendix~\ref{ap:threshold} for further details.
\end{remark}

\begin{remark}\rm\label{rm:mean}
	For simplicity, we have restricted our discussion to mean zero initial data. However, our results extend to general data in $\FL^{s,p}(\T)$ without the mean zero condition with the following modifications. Instead of the Gaussian measure $\rho$ in \eqref{gaussian}, we consider the measure $\cj{\rho}$ induced by the Ornstein-Uhlenbeck loop
	\begin{equation}
		u^\omega(x) = \sum_{n\in\Z} \frac{g_n(\omega)}{\jb{n}} e^{inx},
		\label{OU}
	\end{equation}
which has the formal density
	\begin{equation}
		d\cj{\rho} = Z_0^{-1} e^{-\frac12 \int_\T u^2 \, dx -\frac12 \int_\T (\dx u)^2 \, dx} \, du.
		\label{gaussian_new}
	\end{equation} 
	The properties of the Gaussian measure $\rho$ discussed in Section~\ref{sec:inv}, including Lemma~\ref{lm:tail}, also hold for $\conj{\rho}$; see \cite{NahOhBelletSta12}.
	We can then define the Gibbs measure $\cj{\mu}$ with $\conj{\rho}$ as the underlying Gaussian measure, which is still normalizable (see Remarks~1.2 and 4.1 in \cite{OhSosoeTolomeo}). The main difference in the argument comes from the local well-posedness of \eqref{gauged} without the mean zero condition, since the nonlinear estimates in Section~\ref{sec:lwp} hold only for mean zero functions. This is resolved by defining the solution map $\conj{\Phi}(t)$ as a concatenation of the solution maps $\Phi_\al(t)$ of \eqref{gauged} with prescribed mean $\al$. This construction is further discussed in Appendix~\ref{ap:mean}. Since the proof of the a.s. global well-posedness and invariance of the Gibbs measure without prescribing zero mean is analogous to the argument in Section~\ref{sec:inv}, we omit the details.

\end{remark}
\begin{remark}\rm\label{rm:DNLS}
	(i) In \cite{NahOhBelletSta12}, Nahmod-Oh-Rey-Bellet-Staffilani studied the derivative nonlinear Schr\"odinger equation (DNLS) on the one-dimensional torus. In particular, they constructed a weighted Wiener measure, invariant under the gauged dynamics, and established almost sure global well-posedness of DNLS in the support of said measure. 
%
%
%
	Unlike for gKdV \eqref{gkdv}, local well-posedness in the support of the measure was already available in \cite{GH}. Consequently, the main difficulty arose in the globalization process. The energy associated to the gauged dynamics was no longer conserved for truncated solutions, which required an approach reminiscent of the $I$-method to instead establish almost invariance of the truncated measures. 
	In our case, the main difficulty is in establishing the local well-posedness of $\mathcal{G}$-gKdV \eqref{gauged} in the Fourier-Lebesgue support of the measure, which was readily available for DNLS. Although we also have to prove the invariance of the finite-dimensional Lebesgue measure with respect to the truncated dynamics in \eqref{truncated}, unlike in \cite{NahOhBelletSta12}, the Hamiltonian is still conserved and we can easily show invariance of the Gibbs measures associated to~\eqref{truncated}. 
	
	\noi(ii) One additional difficulty in establishing invariance of the Gibbs measure $\mu$ under the flow of \eqref{gkdv} was due to the gauge transformation. The map $\mathcal{G}_{0,t}$ for $k\geq 4$ is only a gauge transformation when acting on space-time functions. This is a sharp contrast with DNLS, whose more involved gauge transformation is well defined as a map on $\FL^{s,p}(\T)$, allowing the authors in \cite{NahOhBelletSta12} to consider the push-forward of the measure $\mu$ by the gauge transformation. This topic was further explored for DNLS in a subsequent work \cite{NahBelletShefSta11}. In this paper, we bypass the difficulty associated with the gauge transformation by exploiting the invariance of the Gibbs measure under spatial translations.

\end{remark}
	
\smallskip
\noindent
\textbf{Organization of the paper.}
The remainder of the paper is organized as follows. In Section~\ref{sec:notation}, we introduce relevant notations, linear estimates and auxiliary results needed to establish the main nonlinear estimates. Theorem~\ref{th:lwp} is shown in Section~\ref{sec:lwp}, where we decompose the nonlinearity into non-resonant and resonant contributions, and establish corresponding nonlinear estimates. In Section~\ref{sec:inv}, we prove almost sure global well-posedness of \eqref{gauged} and the invariance of the Gibbs measure under the corresponding flow. Moreover, we establish invariance of $\mu$ under the flow of gKdV \eqref{gkdv}. Some results on properties of the gauge transformation and of the solution map are included in Appendix~\ref{ap:gauge}. The construction of the solution map without the mean zero condition is discussed in Appendix~\ref{ap:mean}, and further details on the threshold case ($k=5$ and $R=\|Q\|_{L^2(\R)}$ in Theorem~\ref{th:invariance} (b)) can be found in Appendix~\ref{ap:threshold}. Proofs of some lemmas in Section~\ref{sec:inv} are given in Appendix~\ref{ap:measure}.

\section{Notations, function spaces and linear estimates}\label{sec:notation}
We start by introducing some useful notation. For non-negative quantities $A,B$, let $A\les B$ denote an estimate of the form $A\leq CB$ for some constant $C>0$. Similarly, $A\sim B$ will denote $A\les B$ and $B\les A$, while $A\ll B$ will denote $A\leq \eps B$, for some small positive constant $\eps$.
The notations $a+$ and $a-$ represent $a+\eps$ and $a-\eps$ for arbitrarily small $\eps>0$, respectively.
Lastly, our conventions for the Fourier transform are as follows.
The Fourier transform of $u: \R\times\T \to \R$ with respect to the space variable is given by
$$\Ft_x u(t,n) = \ft{u}(t,n) =  \int_\T u(t,x) e^{-2\pi inx} \ dx.$$
The Fourier transform of $u$ with respect to the time variable is given by
$$\Ft_t u(\tau,x) = \int_\R u(t,x) e^{-2\pi it\tau} \ dt.$$
The space-time Fourier transform is denoted by $\Ft_{t,x} = \Ft_t \Ft_x$. For simplicity, we will drop the harmless factors of $2\pi$.

Let $\mathcal{S} (\R\times\T)$  denote the space of functions $u:\R\times\R\to\C$, with $u\in C^\infty(\R\times\T)$ which satisfy
\begin{align*}
u(t,x+1) = u(t,x), \quad  \sup_{(t,x) \in \R\times\T} | t^\al \partial_t^\be \partial_x^\gamma u(t,x)| < \infty, \quad \al,\be,\gamma\in\NB \cup \{ 0\} .
\end{align*}
In the following, we define the $X^{s,b}$-spaces adapted to the Fourier-Lebesgue setting (see \cite{Grock1,GH}).

\begin{definition}\label{def:xsb}
	Let $s,b\in\R$, $1\leq p,q\leq \infty$. The space $X^{s,b}_{p,q}(\R\times\T)$, abbreviated as $X^{s,b}_{p,q}$, is defined as the completion of $\mathcal{S}(\R\times\T)$ with respect to the norm\footnote{According to our definition of the Fourier transform, the weight factor $\jb{\tau -n^3}$ should be chosen as $\jb{\tau -4\pi^2n^3}$. For simplicity of the notation, we shall ignore such an inessential power of $2\pi$ in the sequel.}
	\begin{align*}
	\|u\|_{X^{s,b}_{p,q}} = \big\| \jb{n}^s \jb{\tau - n^3}^b \ft{u}(\tau, n) \big\|_{\l^p_n L^q_\tau}.
	\end{align*}
	When $p=q=2$, the $X^{s,b}_{p,q}$-spaces defined above reduce to the standard $X^{s,b}$-spaces in \cite{BO93}.
\end{definition}

Recall the following embedding. For any $1\leq p <\infty$, 
\begin{align*}
X^{s,b}_{p,q} (\T) &\embeds C (\R; \FL^{s,p}(\T)) \quad \text{for } b>\frac{1}{q'} = 1 - \frac1q.
\end{align*}
Since $X^{s, \frac12}_{p,2}$ fails to embed into $C\big(\R; \FL^{s,p}(\T) \big)$, we will consider the smaller space $Z^{s,\frac12}_{p}\embeds C\big(\R; \FL^{s,p}(\T) \big)$ defined by $Z^{s,b}_{p} := X^{s, b}_{p,2} \cap X^{s, b-\frac12}_{p,1}$.
We can also define the local-in-time version of these spaces on the interval $[-T,T]$ through the following norm
$$\| u \|_{Z^{s, b}_p (T)} =\inf \big\{ \|v\|_{Z^{s, b}_p} : \ v\vert_{[-T,T]} = u \big\},$$
where the infimum is taken over all possible extensions of $u$ on $[-T,T]$.

%
%
The following are linear estimates associated with the gKdV equation (see \cite[Lemma~7.1]{GH} for an analogous proof). Let $S(t) = e^{-t\dx^3}$ denote the linear propagator of the Airy equation. 
\begin{lemma}\label{lm:linear} Let $1\leq p <\infty$ and $s,b\in\R$. Then, the following estimates hold:
	\begin{align*}
	\big\| S(t) u_0 \big\|_{Z^{s,b}_{p}(T)}  & \les \|u_0\|_{\FL^{s,p}}, \\
	\bigg\| \int_0^t S(t-t') F(t',x) \ dt' \bigg\|_{Z^{s,\frac{1}{2}}_p (T)} & \les \|F\|_{Z^{s,-\frac{1}{2}}_{p}(T)},
	\end{align*}
	for any $0<T\leq 1$.
\end{lemma}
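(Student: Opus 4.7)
The plan is to follow the standard Bourgain-space linear-estimate recipe, carefully tracking its interaction with the $\l^p_n$-structure and with the Besov-type $L^1_\tau$-component of the $Z^{s,b}_p$-scale. Fix once and for all $\eta\in C^\infty_c(\R)$ with $\eta\equiv 1$ on $[-1,1]$; since $0<T\le 1$, the local-in-time norm is controlled by the global norm of $\eta(\cdot)\,\cdot$, so it suffices to prove the global-in-time estimates after multiplication by $\eta(t)$. Throughout, the spatial weight $\jb{n}^s$ commutes with every operator involved, so we may effectively assume $s=0$.

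For the homogeneous estimate, a direct computation gives
\[ \Ft_{t,x}\bigl[\eta(t)S(t)u_0\bigr](\tau,n)=\ft{\eta}(\tau-n^3)\,\ft{u_0}(n), \]
so the $X^{0,b}_{p,q}$-norm factorizes as $\|\jb{\sigma}^b\ft{\eta}(\sigma)\|_{L^q_\sigma}\,\|\ft{u_0}\|_{\l^p_n}$, and the first factor is finite by Schwartz decay of $\ft{\eta}$. Applying this to each of the two components $X^{s,b}_{p,2}$ and $X^{s,b-1/2}_{p,1}$ making up $Z^{s,b}_p$ yields the homogeneous bound at once.

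For the Duhamel estimate the starting identity, obtained by taking the full space-time Fourier transform of $F$, is
\[ \int_0^t S(t-t')F(t',x)\,dt'=\sum_{n}e^{inx}\int_\R\frac{e^{it\tau}-e^{itn^3}}{i(\tau-n^3)}\,\Ft_{t,x}F(\tau,n)\,d\tau, \]
and we split the $\tau$-integral according to the modulation size $|\tau-n^3|$. On the high-modulation region $|\tau-n^3|\ge 1$ the two phases $e^{it\tau}$ and $e^{itn^3}$ are treated separately: after multiplication by $\eta(t)$, the $e^{it\tau}$-piece has space-time Fourier transform $\ft{\eta}(\tau'-\tau)\,\Ft_{t,x}F(\tau,n)/(i(\tau-n^3))$ (times the indicator), and Young's inequality in $\tau$ against the Schwartz kernel $\ft{\eta}$ converts the weight $\jb{\tau-n^3}^{-1}$ on $F$ into the required weights on the output; the $e^{itn^3}$-piece is a pure Airy evolution of the datum $n\mapsto \int_{|\tau-n^3|\ge 1}\Ft_{t,x}F(\tau,n)/(i(\tau-n^3))\,d\tau$, so it is absorbed by the homogeneous estimate just proved. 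On the low-modulation region $|\tau-n^3|<1$ one expands
\[ \frac{e^{it\tau}-e^{itn^3}}{i(\tau-n^3)}=e^{itn^3}\sum_{m\ge 1}\frac{(it)^m}{m!}(\tau-n^3)^{m-1}, \]
and each term is again a homogeneous Airy evolution (multiplied by the compactly supported factor $\eta(t)t^m/m!$) whose datum is an integral of $F$ against a bounded kernel, estimated by H\"older in $\tau$.

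The main subtlety --- and the reason the $Z^{s,b}_p$-scale pairs $L^2_\tau$ at weight $b$ with $L^1_\tau$ at the shifted weight $b-\frac12$ --- is that the $L^2_\tau$-duality underlying the classical $X^{s,b}$-estimate is unavailable for the $X^{s,b-1/2}_{p,1}$-component. This is exactly where Young's convolution inequality in $\tau$ enters: $L^1_\tau*L^1_\tau\embeds L^1_\tau$ for free, with the $\frac12$-unit weight mismatch absorbed by the Schwartz tail of $\ft{\eta}$. The resulting argument is a straightforward adaptation of \cite[Lemma~7.1]{GH} to the periodic setting, and I do not anticipate any fundamental obstacle beyond the careful book-keeping required to pair each component of the $Z^{s,b}_p$-norm on the left with the right component on the right.
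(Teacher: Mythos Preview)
Your proposal is correct and follows essentially the same approach as the paper: the paper does not give a proof but simply refers the reader to \cite[Lemma~7.1]{GH} for an analogous argument, and what you have outlined is precisely that argument (time cutoff, factorization for the homogeneous term, modulation splitting and Taylor expansion for the Duhamel term), including your own explicit reference to \cite{GH}.
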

The following lemma allows us to gain a small power of $T$ needed to close the contraction mapping argument. It can be shown by modifying the proof for $p=2$ (see \cite[Lemma~2.11]{TAO06}).
\begin{lemma}\label{lm:time}
	Let $-\frac{1}{2} <b'\leq b <\frac{1}{2}$ and $1\leq p < \infty$. The following holds:
	\begin{equation*}
	\left\|  u \right\|_{X^{s,b'}_{p,2}(T)} \lesssim T^{b-b'} \|u\|_{X^{s,b}_{p,2}(T)},
	\end{equation*}
	for any $0<T\leq 1$.
\end{lemma}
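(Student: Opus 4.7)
The plan is to reduce the estimate to a one-dimensional statement in the time variable (for each spatial frequency $n$), where it becomes the standard scalar $H^b$ time-localization inequality already proved in \cite{TAO06}. The point is that the $\ell^p_n$ structure only appears outside the time norm, so it plays no role in the time-localization step.

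First, I would pick an extension. Given $u$ on $[-T,T]$, choose $\wt u\in X^{s,b}_{p,2}(\R\times\T)$ with $\wt u|_{[-T,T]}=u$ and $\|\wt u\|_{X^{s,b}_{p,2}}\leq 2\|u\|_{X^{s,b}_{p,2}(T)}$. Fix a Schwartz cutoff $\eta\in C^\infty_c(\R)$ with $\eta\equiv 1$ on $[-1,1]$ and set $v(t,x):=\eta(t/T)\wt u(t,x)$. Since $v|_{[-T,T]}=u$, we have $\|u\|_{X^{s,b'}_{p,2}(T)}\leq \|v\|_{X^{s,b'}_{p,2}}$, so it suffices to prove
\[ \|\eta(t/T)\wt u\|_{X^{s,b'}_{p,2}}\lesssim T^{b-b'}\|\wt u\|_{X^{s,b}_{p,2}}. \]

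Next, I would unfold the $X^{s,b}_{p,2}$-norm fiberwise. For each $n\in\Z$, set $w_n(t):=e^{-itn^3}\ft{\wt u}(t,n)$, where $\ft{\cdot}$ is the spatial Fourier coefficient. A direct change of variables $\sigma=\tau-n^3$ in the defining norm shows
\[ \|\jb{\tau-n^3}^b \Ft_{t,x}\wt u(\tau,n)\|_{L^2_\tau} = \|w_n\|_{H^b_t(\R)}, \]
and hence
\[ \|\wt u\|_{X^{s,b}_{p,2}} = \bigl\|\jb{n}^s \|w_n\|_{H^b_t}\bigr\|_{\ell^p_n}. \]
Because $e^{-itn^3}$ depends only on $t$ for fixed $n$, multiplication by the cutoff $\eta(t/T)$ commutes with the phase; thus the corresponding function for $\eta(t/T)\wt u$ is precisely $\eta(t/T)w_n$.

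At this point the proof reduces to the scalar bound
\[ \|\eta(t/T)w\|_{H^{b'}(\R)}\lesssim T^{b-b'}\|w\|_{H^b(\R)},\qquad -\tfrac12<b'\leq b<\tfrac12,\ 0<T\leq 1, \]
with constant independent of $w$ and $n$. This is exactly \cite[Lemma~2.11]{TAO06} (or its standard proof via the Fourier representation of the multiplier $\eta(t/T)$ combined with the elementary weight inequality for $\jb{\tau}$ in the range $|b|,|b'|<\tfrac12$). Applying this estimate fiberwise with constant uniform in $n$, and then taking the $\ell^p_n$ norm against the weight $\jb{n}^s$, yields
\[ \|\eta(t/T)\wt u\|_{X^{s,b'}_{p,2}} \lesssim T^{b-b'}\|\wt u\|_{X^{s,b}_{p,2}}\lesssim T^{b-b'}\|u\|_{X^{s,b}_{p,2}(T)}, \]
which is the desired bound.

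The only step that requires any care is the fiberwise reduction, which is really a bookkeeping exercise: once one observes that $\ell^p_n$ sits outside $L^2_\tau$ and that the modulation $e^{-itn^3}$ is a unitary multiplier on $H^b_t$ for each fixed $n$, the $\ell^p$ index $p$ becomes a spectator. No multilinear estimate or refined analysis is needed, so I do not anticipate an obstacle; the main thing to write down carefully is the reduction identity and invoke the scalar $H^b$ bound from \cite[Lemma~2.11]{TAO06}, whose proof carries over verbatim.
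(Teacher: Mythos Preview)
Your proposal is correct and is precisely the modification the paper has in mind: the paper does not give a proof but merely states that the result ``can be shown by modifying the proof for $p=2$ (see \cite[Lemma~2.11]{TAO06}),'' and your fiberwise reduction in $n$ to the scalar $H^b$ time-localization estimate is exactly that modification. There is nothing to add.
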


Lastly, we include well-known results needed in the proof of the nonlinear estimate (see, e.g., \cite[Lemma 4.2]{GTV}, \cite[Lemma~5]{MWX}, and \cite[Lemma 4.1]{CKSTT04}, respectively).
\begin{lemma}\label{lm:convolution}
	Let $0\leq \al\leq \be$ such that $\al + \be>1$ and $\eps>0$. Then, we have
	\begin{align*}
	\int_\R \frac{1}{\jb{x-a}^{\al} \jb{x-b}^{\be} } dx \les \frac{1}{ \jb{a-b}^{\gamma} },
	\end{align*}
	where 
	\begin{align*}
	\quad \gamma =
	\begin{cases}
	\al + \be -1 , & \be<1, \\
	\al - \eps, & \be =1, \\
	\al, & \be>1.
	\end{cases}
	\end{align*}
\end{lemma}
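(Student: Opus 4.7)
I plan to prove Lemma~\ref{lm:convolution} by a standard region decomposition after a normalization step. First I would observe that by translation we may assume $a=0$ and $b=d$, so that the integral becomes
\[
I(d)=\int_\R \frac{1}{\jb{x}^{\al}\jb{x-d}^{\be}}\,dx,
\]
and the target bound reads $I(d)\lesssim \jb{d}^{-\gamma}$. If $|d|\lesssim 1$ then $\jb{d}\sim 1$ and both weights are comparable on the dominant range, so
\[
I(d)\lesssim \int_\R \jb{x}^{-(\al+\be)}\,dx \lesssim 1,
\]
which is finite exactly because of the hypothesis $\al+\be>1$. So the only substantial case is $|d|\gg 1$.

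For $|d|\gg 1$ I would split $\R=A_1\cup A_2\cup A_3$ where $A_1=\{|x|\le |d|/2\}$ (near $0$), $A_2=\{|x-d|\le |d|/2\}$ (near $d$), and $A_3$ is the complement (far from both). On $A_1$ we have $\jb{x-d}\sim |d|$, so
\[
\int_{A_1}\frac{dx}{\jb{x}^{\al}\jb{x-d}^{\be}}\lesssim |d|^{-\be}\int_{|x|\le |d|/2}\jb{x}^{-\al}\,dx,
\]
which, depending on whether $\al<1$, $\al=1$, or $\al>1$, is bounded by a constant times $|d|^{1-\al-\be}$, $|d|^{-\be}\log|d|$, or $|d|^{-\be}$ respectively. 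The integral over $A_2$ is estimated symmetrically by swapping the roles of $\al$ and $\be$. On $A_3$ both weights are $\gtrsim \jb{x}$ (in fact comparable when $|x|\gg |d|$), so
\[
\int_{A_3}\frac{dx}{\jb{x}^{\al}\jb{x-d}^{\be}}\lesssim |d|^{1-\al-\be}
\]
by splitting further into the annulus $|d|/2<|x|<2|d|$ (where both factors are $\sim |d|$ and the measure is $\sim |d|$) and $|x|\ge 2|d|$ (integrating $\jb{x}^{-(\al+\be)}$).

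Summing the contributions and recalling $0\le \al\le \be$, the dominant term is governed by the $A_2$ piece: when $\be<1$ one obtains $|d|^{1-\al-\be}=\jb{d}^{-(\al+\be-1)}$; when $\be>1$ the $x$-integral in $A_2$ converges, leaving $|d|^{-\al}$; and when $\be=1$ the logarithmic factor is absorbed by the arbitrary loss $\eps>0$, yielding $|d|^{-\al+\eps}$. These match the three cases in the definition of $\gamma$, and they dominate both the $A_1$ and $A_3$ contributions since $\al\le \be$. The only mildly delicate point is the $\be=1$ case, where one must verify the logarithm is indeed absorbable into $\jb{d}^{\eps}$, but this is straightforward since $\log\jb{d}\lesssim_\eps \jb{d}^{\eps}$. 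Putting these estimates together gives the claimed bound.
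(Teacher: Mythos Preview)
Your proof is correct and follows the standard region-decomposition argument. The paper itself does not prove this lemma; it merely cites it as a well-known result (see \cite[Lemma~4.2]{GTV}), so there is nothing in the paper to compare your approach against beyond noting that your argument is precisely the elementary proof one finds in such references.
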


\begin{lemma}\label{lm:discrete_convolution}
	Let $0\leq \al,\be<1$ such that $\al + \be > 1$.
	Then, we have 
	\begin{align*}
	\sum_{\substack{n_1,n_2\in\Z \\ n_1+n_2=n}} \frac{1}{ \jb{n_1}^{\al} \jb{n_2}^{\be} } \les \frac{1}{ \jb{n}^{\al+\be -1} },
	\end{align*}
	uniformly over $n\in\Z$.
\end{lemma}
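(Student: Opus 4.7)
The plan is to run a region decomposition of the sum based on the sizes of $|n_1|$ and $|n_2|$ relative to $|n|$. Since $n_1+n_2=n$, the triangle inequality forces at least one of $|n_1|,|n_2|$ to be $\gtrsim |n|$, which is the mechanism that produces the factor $\jb{n}^{-(\al+\be-1)}$ on the right-hand side.

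First, I would dispatch the two ``easy'' regions. If $|n_1| \leq |n|/2$, then $|n_2| = |n-n_1| \geq |n|/2$, so $\jb{n_2}^{-\be} \les \jb{n}^{-\be}$ pulls out of the sum. The remaining one-variable sum $\sum_{|n_1| \leq |n|/2} \jb{n_1}^{-\al} \les \jb{n}^{1-\al}$ by integral comparison (using $\al<1$), giving the desired bound $\jb{n}^{1-\al-\be}$. The symmetric region $|n_2| \leq |n|/2$ is handled identically, with the roles of $\al$ and $\be$ swapped, using $\be<1$.

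It remains to estimate the contribution from the region where both $|n_1|, |n_2| > |n|/2$, which I would further subdivide into a ``bulk'' piece $|n_1| \leq 2|n|$ and a ``tail'' piece $|n_1| > 2|n|$. In the bulk, one has $|n_1|,|n_2| \sim |n|$, so the contribution is bounded by a count of $\sim \jb{n}$ lattice points multiplied by the uniform summand size $\jb{n}^{-(\al+\be)}$, i.e., $\jb{n}^{1-\al-\be}$. In the tail, $|n_2|=|n-n_1| \sim |n_1|$, so the summand is $\sim \jb{n_1}^{-(\al+\be)}$; a dyadic decomposition in $|n_1|$ reduces this to $\sum_{M \gtrsim \jb{n},\,\text{dyadic}} M^{1-\al-\be}$.

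The main obstacle is precisely this tail piece: it is where the hypothesis $\al+\be>1$ is essential, as it is needed for the dyadic sum to be geometric and bounded by $\jb{n}^{1-\al-\be}$. (In the extreme case $n=0$, the whole sum reduces to $\sum_{n_1} \jb{n_1}^{-(\al+\be)}$, which is finite exactly under the same hypothesis, confirming that the condition $\al+\be>1$ is sharp.) Every other region is a routine one-dimensional summation controlled by $\al,\be<1$.
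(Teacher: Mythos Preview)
Your proposal is correct: the region decomposition based on which of $|n_1|,|n_2|$ is $\leq |n|/2$, together with the bulk/tail split of the remaining region, is the standard way to prove this estimate, and each step is justified (the hypotheses $\al,\be<1$ are used in the first two regions for the partial sums $\sum_{|n_j|\le |n|/2}\jb{n_j}^{-\al}$, while $\al+\be>1$ is used to sum the tail). The paper does not actually give its own proof of this lemma; it is stated as a well-known auxiliary result with a reference to \cite[Lemma~5]{MWX}, so there is no alternative argument to compare against.
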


\begin{lemma}\label{ckstt}
	If $|n_1| \geq \ldots \geq  |n_{k}|$ and $n_1 + \ldots+ n_{k} = 0$, then
	\begin{align*}
	|n^3_1 + \ldots + n_{k}^3| \les |n_1 n_2 n_3|.
	\end{align*}
\end{lemma}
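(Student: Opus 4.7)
The plan is to reduce the claim to an elementary polynomial manipulation on the hyperplane $n_1+\cdots+n_k=0$. The key identity is
\[ a^3+b^3 = (a+b)^3 - 3ab(a+b), \]
applied with $a=n_1$, $b=n_2$. The zero-sum constraint then lets me replace $n_1+n_2$ by $-(n_3+\cdots+n_k)$, so that $n_1^3+n_2^3$ is expressed entirely in terms of the smaller frequencies $n_3,\ldots,n_k$, with $n_1 n_2$ appearing only as a linear factor. This is the step that introduces the expected prefactor $n_1 n_2$ in the final bound.

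Substituting into the full cube sum, I expect to arrive at an identity of the shape
\[ n_1^3+\cdots+n_k^3 = -(n_3+\cdots+n_k)^3 + 3\,n_1 n_2\,(n_3+\cdots+n_k) + \bigl(n_3^3+\cdots+n_k^3\bigr). \]
The ordering $|n_1|\geq|n_2|\geq|n_3|\geq\cdots\geq|n_k|$ then gives $|n_3+\cdots+n_k|\lesssim_k|n_3|$, which handles the middle term directly. For the first and third terms I would use $|n_3|^3\leq|n_1||n_2||n_3|$ (a consequence of $|n_1|,|n_2|\geq|n_3|$) together with $|n_3+\cdots+n_k|^3\lesssim_k|n_3|^3$; this absorbs all remaining terms into $|n_1 n_2 n_3|$ up to a $k$-dependent constant.

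There is no substantive obstacle here: the lemma is a bookkeeping exercise combining one cubic identity with the trivial size inequalities following from the monotonicity of $|n_j|$. Any $k$-dependence is absorbed into the implicit constant in $\lesssim$, consistent with how this bound is later used in the multilinear analysis of $\phi_k(n,n_1,\ldots,n_k)$.
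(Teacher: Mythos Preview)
Your argument is correct. The identity
\[
n_1^3+\cdots+n_k^3 = -(n_3+\cdots+n_k)^3 + 3\,n_1 n_2\,(n_3+\cdots+n_k) + (n_3^3+\cdots+n_k^3)
\]
follows exactly as you describe, and each of the three terms on the right is bounded by a $k$-dependent constant times $|n_1 n_2 n_3|$ using only $|n_j|\leq |n_3|$ for $j\geq 3$ and $|n_3|\leq |n_1|,|n_2|$.

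Note, however, that the paper does not actually prove this lemma: it is quoted as a known result from \cite[Lemma~4.1]{CKSTT04}, so there is no ``paper's own proof'' to compare against. Your proof is essentially the standard one given in that reference (the cubic identity plus the trivial size ordering), so nothing is lost or gained relative to the cited source.
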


Recall the phase function $\phi_k(n,n_1, \ldots, n_k) = n^3 - n_1^3 - \ldots - n_k^3$, which we will denote by $\phi$ for simplicity.
When $k=2$ (KdV) or $k=3$ (mKdV), the phase function restricted to $n=n_1+\ldots+n_k$ satisfies the following factorizations
\begin{align}
(n_1 + n_2)^3 - n_1^3 - n_2^3 & = 3(n_1+n_2) n_1 n_2, \label{factor1}\\
(n_1+n_2+n_3)^3 -n_1^3 - n_2^3 - n_3^3 & = 3(n_1+n_2)(n_1+n_3)(n_2+n_3). \label{factor2}
\end{align}
Unfortunately, analogous factorizations no longer hold for $k\geq 4$. Instead, we establish the following lemma.

\begin{lemma}\label{lm:resonance}
	Let $k \geq 4$, $n = n_1 + \ldots + n_k$ and $|n_1| \geq \ldots \geq |n_k|>0$. 
	
	\begin{enumerate}[label=\emph{\Alph*}.]
		\item If $|n| \sim |n_1| \gg |n_2|$, $n \neq n_1$, then one of the following holds
		\begin{enumerate}[label*=\emph{\arabic*.}]
			\item $|n_1|^2 |n-n_1| \les |\phi|$;
			\item $|n_1|^2 |n-n_1| \les |n_2 n_3 n_4|$.
		\end{enumerate}
		\item If $|n_1|\sim|n_2| \gg |n_3|$, $n\neq n_1$, $n\neq n_2$, $n_1+ n_2 \neq 0$, then one of the following holds
		\begin{enumerate}[label*=\emph{\arabic*.}]
			\item $|n_1|^2 |n_1+n_2| \les |\phi|$;
			\item $|n_1+n_2| \ll |n_4|$.
		\end{enumerate}
	\end{enumerate}
	
\end{lemma}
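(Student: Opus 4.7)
The plan is to treat both cases in parallel, by assuming the failure of the dispersive conclusion (A.1 or B.1) and deducing the combinatorial one (A.2 or B.2). In each case I will split $\phi$ into a dominant part computable via the KdV factorization \eqref{factor1} or the mKdV factorization \eqref{factor2}, plus a residual cubic sum over the remaining frequencies. The residual, being a sum of cubes of frequencies that themselves sum to zero, is amenable to Lemma~\ref{ckstt}.

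For case A, set $m := n - n_1 = n_2 + \cdots + n_k$, so $m \neq 0$ and $|m| \leq (k-1)|n_2|$. Writing $n = n_1 + m$ and applying \eqref{factor1} to the triple $(n_1, m, n)$,
\begin{equation*}
\phi = 3\, n\, n_1\, m + \bigg( m^3 - \sum_{j=2}^{k} n_j^3 \bigg),
\end{equation*}
and the first term has modulus $\sim |n_1|^2 |m|$ because $|n| \sim |n_1|$. If A.1 fails, then the residual has modulus $\gtrsim |n_1|^2 |m|$. Applying Lemma~\ref{ckstt} to the $k$-tuple $(m, -n_2, \ldots, -n_k)$ (which sums to zero) bounds the residual by the product of the three largest absolute values among its entries. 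A short case analysis on the position of $|m|$ in the ordering shows that if $|m| > |n_4|$, the product always contains the factor $|n_2||n_3|$, forcing $|n_1|^2 \lesssim |n_2||n_3| \leq |n_2|^2$, which contradicts $|n_2| \ll |n_1|$. Hence necessarily $|m| \leq |n_4|$; the top three then become $|n_2|, |n_3|, |n_4|$, and one obtains A.2.

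For case B, set $N := n_1 + n_2 \neq 0$ and $m := n_3 + \cdots + n_k$, with $|m| \leq (k-2)|n_3| \ll |n_1|$. The assumptions give $n - n_1 = n_2 + m$ and $n - n_2 = n_1 + m$, both nonzero and of modulus $\sim |n_1|$. Applying \eqref{factor2} to $(n_1, n_2, m)$ yields
\begin{equation*}
\phi = 3 N (n - n_1)(n - n_2) + \bigg( m^3 - \sum_{j=3}^{k} n_j^3 \bigg),
\end{equation*}
with the first term of modulus $\sim |N| |n_1|^2$. If B.1 fails, the residual is $\gtrsim |N| |n_1|^2$. Applying Lemma~\ref{ckstt} to $(m, -n_3, \ldots, -n_k)$ and using $|m| \lesssim |n_3|$, one verifies that the product of the three largest absolute values is $\lesssim |n_3|^2 |n_4|$ in every sub-case (regardless of where $|m|$ falls relative to $|n_4|, |n_5|, \ldots$), since any factor beyond $|n_3|, |n_4|$ is itself bounded by $|n_4|$. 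Dividing yields $|N| \lesssim (|n_3|/|n_1|)^2 |n_4| \ll |n_4|$, which is B.2.

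The main obstacle is the routine but delicate case analysis for the ``top-three'' product in Lemma~\ref{ckstt}: one must track the various possible orderings of $|m|$ among $|n_3|, |n_4|, |n_5|, \ldots$ (or among $|n_2|, |n_3|, |n_4|, \ldots$ in case A). In case A, the size separation $|n_2| \ll |n_1|$ is precisely what rules out the misaligned sub-case $|m| > |n_4|$; in case B, a uniform bound $|n_3|^2 |n_4|$ emerges from every sub-case, and it is this uniformity that drives the smallness $|N| \ll |n_4|$.
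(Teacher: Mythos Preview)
Your proof is correct and follows essentially the same approach as the paper: the identical decomposition of $\phi$ via \eqref{factor1} (for A) and \eqref{factor2} (for B), followed by Lemma~\ref{ckstt} applied to the residual cubic sum. The paper compresses your case analysis into the single bound $|n_2 n_3|\max(|n-n_1|,|n_4|)$ in part A and the direct bound $|n_3|^2|n_4|$ in part B, and frames both parts as ``assume both conclusions fail, derive a contradiction'' rather than ``assume the first fails, deduce the second,'' but these are only cosmetic differences.
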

\begin{proof}
	We start by proving A. Assume that $|n_1|^2 |n-n_1| \gg \max(|\phi|, |n_2n_3n_4|)$. Using \eqref{factor1}, we can rewrite $\phi$ as follows
	$$\phi = 3 n n_1 (n-n_1) + (n-n_1)^3 - n_2^3 - \ldots - n_k^3.$$
	Since $|nn_1(n-n_1)| \sim |n_1|^2 |n-n_1|$ and using Lemma~\ref{ckstt}, we have
	$$|n_1|^2 |n-n_1| \sim |(n-n_1)^3 - n_2^3 - \ldots - n_k^3| \les |n_2n_3| \max(|n-n_1| , |n_4|). $$
	From the above estimate, we must have $|n_1|^2 |n-n_1| \les |n_2n_3n_4|$ which contradicts our initial assumption.
	To prove part B, assume that $|n_1|^2 |n_1+n_2| \gg |\phi|$ and $|n_1+n_2| \ges |n_4|$. Using \eqref{factor2}, we can rewrite $\phi$ as follows
	\begin{align*}
	\phi = 3(n-n_1) (n - n_2)(n_1 + n_2) + (n_3 + \ldots + n_k)^3 - n_3^3 - \ldots - n_k^3.
	\end{align*}
	Since $|(n-n_1)(n-n_2)(n_1+n_2) | \sim |n_1|^2 |n_1 + n_2|$, using Lemma~\ref{ckstt}, we have
	\begin{align*}
	|n_1|^2 |n_1+n_2| \sim |(n_3 + \ldots + n_k)^3 - n_3^3 - \ldots - n_k^3 | \les |n_3|^2 |n_4|. 
	\end{align*}
	From the above estimate, we must have $|n_1+n_2| \ll |n_4|$ which contradicts our assumption.
\end{proof}
	
%
%
%
\section{Nonlinear estimates and local well-posedness}\label{sec:lwp}
In this section we state and prove the main nonlinear estimates needed to show Theorem~\ref{th:lwp}, as well as proving the latter theorem through a contraction mapping argument. We will establish a nonlinear estimate for the more general multilinear operator 
\begin{equation}\label{nonlinearity}
\mathcal{N}(u_0, \ldots, u_m) = \PP(u_1 \ldots u_m)\dx u_0 - \sum_{j=1}^m \PP_0(u_j \dx u_0) \prod_{\substack{i=1\\i\neq j}}^m u_i,
\end{equation}
where $m=k-1 \geq 3$ and $\PP = \Id - \PP_0$. Note that $\pm k\mathcal{N}(u, \ldots, u)$ coincides with the nonlinearity of $\mathcal{G}$-gKdV \eqref{gauged} and that the quantities subtracted on the right-hand side of \eqref{nonlinearity} effectively remove certain resonant frequency interactions. In fact, the spatial Fourier transform of $\mathcal{N}(u_0, \ldots, u_m)$ at $n$, omitting time dependence, is given by
\begin{equation*}
 \sum_{\substack{n=n_0 + \ldots + n_m\\nn_0\cdots n_m\neq0}}\bigg(1 - \1_{\{n=n_0\}} - \sum_{j=1}^m \1_{\{n_0+n_j=0\}}  \bigg) in_0 \ft{u}_0(n_0)  \cdots \ft{u}_m(n_m),
\end{equation*}
for mean zero functions $u_0, \ldots, u_m$.

The main difficulty in estimating $\mathcal{N}(u_0, \ldots, u_m)$ lies in controlling the derivative. To that end, we want to exploit the multilinear dispersion through the phase function $\phi$ and use Lemma~\ref{lm:resonance} to guide our case separation in the nonlinearity. Due to the restrictions in Lemma~\ref{lm:resonance}, consider the following resonant regions in frequency space:
\begin{align*}
A_j(n) =
\begin{cases}
	\big\{ (n_0, \ldots, n_m)\in\Z_*^{m+1}: \ n=n_j  \big\}, & j=0, \ldots, m,\\
	\big\{ (n_0, \ldots, n_m) \in \Z_*^{m+1}: \ n_0+n_l = 0 \big\}, & -j = l = 1, \ldots, m.
\end{cases}
\end{align*}
We can decompose the nonlinearity as $\mathcal{N} = \mathcal{N}_0 + \mathcal{R}$, where the non-resonant and resonant contributions are respectively defined as 
\begin{align}
	\Ft_x \big( \mathcal{N}_0 (u_0, \ldots, u_m)\big)(n) & = \sum_{\substack{n=n_0+\ldots+n_m\\ nn_0\cdots n_m \neq 0}} \1_{\bigcap\limits_{j=-m}^m A_j^c} in_0 \ft{u}_0(n_0) \cdots \ft{u}_m(n_m), \label{nonresonant}\\
\Ft_x \big( \mathcal{R}(u_0, \ldots, u_m) \big) (n) &= \sum_{\substack{n=n_0 + \ldots + n_m \\ nn_0\cdots n_m \neq 0}}
\bigg[\sum_{J\in \mathcal{C}} (-1)^{|J|+1} \1_{\bigcap\limits_{j\in J} A_j} \bigg]in_0 \ft{u}_0(n_0) \cdots \ft{u}_m(n_m), \label{resonant}
\end{align}
where $J\in \mathcal{C}$ if $J=\{j\}$, $j=1, \ldots, m$, or $J\subset\{-m, \ldots, m\}$ and $|J|\geq 2$.

The following proposition states the main nonlinear estimates.
\begin{proposition}\label{prop:nonlinear}
	Let $u_0, \ldots, u_m$ be mean zero functions. For $2 < p < \infty$ there exists $\frac{1}{2}<s_*(p)<1-\frac{1}{p}$ such that for any $s>s_*(p)$ the following estimates hold
	\begin{align*}
	\|\mathcal{N}_0 (u_0, \ldots, u_m) \|_{Z^{s,-\frac12}_{p}(T)} & \les T^\theta \prod_{j=0}^m \|u_j\|_{Z^{s,\frac12}_p(T)},\\
	\|\mathcal{R} (u_0, \ldots, u_m) \|_{Z^{s,-\frac12}_{p}(T)} & \les T^\theta \prod_{j=0}^m \|u_j\|_{Z^{s,\frac12}_p(T)},
	\end{align*}
	for some $0<\theta <1$ and any $0<T\leq 1$.
\end{proposition}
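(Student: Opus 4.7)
The plan is to prove both estimates by duality, reducing them to multilinear bounds on the Fourier side. Since $Z^{s,-1/2}_p(T) = X^{s,-1/2}_{p,2}(T)\cap X^{s,-1}_{p,1}(T)$, the norms are dual to $X^{-s,1/2}_{p',2}$-type norms, and pairing the nonlinearity against a test function $v$ of unit dual norm reduces the claims to multilinear convolution sums in $(n_0,\dots,n_m,n)$ and $(\tau_0,\dots,\tau_m,\tau)$, restricted to the appropriate frequency region defined by the $A_j$'s. The prefactor $T^\theta$ is produced via Lemma~\ref{lm:time} by working with modulation exponent $\tfrac12-\eps$ in one input slot, thus generating a $T^\eps$ after the trivial embedding.

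For the non-resonant piece $\mathcal{N}_0$, I would Littlewood--Paley decompose so that $|n_j|\sim N_j$, $|n|\sim N$, and $\jb{\tau_j-n_j^3}\sim L_j$, using the resonance identity $\tau-n^3=\phi+\sum_j(\tau_j-n_j^3)$ to ensure that the largest modulation controls $|\phi|$. After ordering $N_1\geq\dots\geq N_m$ by symmetry in the symmetric arguments, I would separate according to which of cases A or B of Lemma~\ref{lm:resonance} applies. In case A, the dichotomy either yields $|\phi|\gtrsim N_1^2|n-n_1|$, in which case the modulation weight $\jb{\tau-n^3}^{-1/2}$ together with the $L_j^{1/2}$-weights absorbs the derivative $|n_0|$, or it yields $N_1^2|n-n_1|\lesssim |n_2n_3n_4|$, in which case the derivative is instead absorbed by three low-frequency factors via a trilinear Fourier--Lebesgue Strichartz estimate (the analogue of the periodic $L^6$-Strichartz). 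Case B is handled analogously, using the bilinear Fourier--Lebesgue Strichartz estimate (the $L^4$-analogue) to pair $u_1$ with $u_2$ in the quasi-resonant branch $|n_1+n_2|\ll |n_4|$. The remaining configurations, where three or more top frequencies are comparable, are treated by direct application of the trilinear Strichartz. In each branch the dyadic summation closes thanks to $s>s_*(p)$ together with the $\ell^p$-slack provided by $p>2$.

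For the resonant piece $\mathcal{R}$, the crucial observation is that the indicator functions in \eqref{resonant} impose linear constraints on the frequencies that effectively reduce the degree of the multilinear sum: on each $A_j$ with $j\geq 0$, the constraint $n=n_j$ forces a subsum of the remaining frequencies to vanish, allowing me to factor the product and bound the remaining pieces in $\FL^{s,p}_x$; on each $A_{-l}$, the constraint $n_0+n_l=0$ matches the derivative $|n_0|$ directly against $|n_l|$, so no derivative loss remains. The inclusion--exclusion structure in \eqref{resonant} then reduces to a finite combination of such contributions, each controllable by H\"older's inequality in $\FL^{s,p}_x$, the embedding $X^{s,1/2-}_{p,2}(T)\hookrightarrow L^\infty_T\FL^{s,p}$, and Lemma~\ref{lm:time} to supply the $T^\theta$ factor.

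The main obstacle, as in previous work on gKdV with $k\geq 4$, is the quasi-resonant branch of Lemma~\ref{lm:resonance}(A), where no modulation gain is available and one must rely entirely on the trilinear Strichartz estimate. It is precisely here that the value of $s_*(p)$ in Remark~\ref{rm:sp} emerges, through the balance between the derivative loss, the small $N_1^\eps$-loss in the trilinear Strichartz, and the $\ell^p$-summability of the dyadic pieces. Treating this case in a way that is uniform in $k\geq 4$ is the technical heart of the proof; once the resonance analysis of Lemma~\ref{lm:resonance} is in place, the remaining branches are comparatively routine.
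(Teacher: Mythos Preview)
Your overall architecture for $\mathcal{N}_0$---use Lemma~\ref{lm:resonance} to split into a ``large $\phi$'' branch handled by modulation weights and a ``quasi-resonant'' branch handled by bilinear/trilinear Strichartz estimates---is the same as the paper's. However, two points in your sketch are genuine gaps rather than just underspecification.

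\textbf{The resonant piece $\mathcal{R}$ is not purely elementary.} Your claim that on $A_j$, $j\geq 1$, the constraint $n=n_j$ lets you ``factor the product and bound the remaining pieces in $\FL^{s,p}_x$'' overlooks that the derivative sits on $u_0$, not on $u_j$. For the singleton contributions $J=\{j\}$ (these are the $\mathcal{R}_l$ in the paper with $l=1$), the weight $\jb{n}^s$ is indeed absorbed by $u_j$, but you are left with the constraint $n_0+\sum_{i\neq j}n_i=0$, $n_0+n_i\neq 0$, and a bare factor $|n_0|$. This is a genuine $(m{-}1)$-linear derivative nonlinearity at frequency zero, and H\"older in $\FL^{s,p}$ alone cannot close it for $s<1$; the paper needs the bilinear Strichartz estimate \eqref{new_l4} here (pairing $D^s u_0$ with $D^{1-s}u_{j'}$ where $|n_{j'}|$ is the largest remaining frequency). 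Similarly, your claim that on $A_{-l}$ ``no derivative loss remains'' is not how $\mathcal{R}$ is organized: singletons $\{-l\}$ do not appear in $\mathcal{C}$, and in the sets $J$ that do contain some $-l$ you still have to control $\jb{n}^s|n_0|$ against the surviving frequencies, which in several subcases again forces the bilinear estimate.

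\textbf{The case analysis for $\mathcal{N}_0$ is incomplete.} Lemma~\ref{lm:resonance} is stated for an ordered tuple with $|n_1|\geq\dots\geq|n_k|$, but in $\mathcal{N}_0$ the derivative frequency $n_0$ is distinguished and you must track where it sits relative to the ordered $n_1,\dots,n_m$. The paper's six cases (Cases~1--6) do exactly this: Cases~1--3 cover $|n_0|\gtrsim|n_1|$, Cases~4--5 cover $|n_1|\gg|n_0|\gg|n_3|$, and Case~6 covers $|n_0|\lesssim|n_3|$. Your ``remaining configurations, where three or more top frequencies are comparable, are treated by direct application of the trilinear Strichartz'' is too optimistic for Case~3.2: there one has $|n_0|\sim|n_1|\sim|n_2|\gg|n_3|$ with $|(n-n_1)(n-n_2)(n_1+n_2)|\lesssim|n_0|^2|n_3|$, and the argument requires first identifying which of $|n-n_1|,|n-n_2|,|n_1+n_2|$ is smallest, then applying \eqref{bilinear_new} to peel off one factor before invoking \eqref{l6-p}. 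Case~6 is likewise not covered by Lemma~\ref{lm:resonance} at all and needs its own treatment. These are not routine; together with Case~1.2/4.2 they are where the threshold $s_*(p)$ in Remark~\ref{rm:sp} is actually determined.
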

\begin{remark}\rm
 It will suffice to show the above estimates for $v_0, \ldots, v_m$ extensions of $u_0, \ldots, u_m$ on $[-T,T]$. Consequently, in the remaining of this section we will show the estimates in $Z^{s,-\frac12}_p$ and $Z^{s,\frac12}_p$, instead of the time localized versions. Moreover, we will establish stronger estimates which allow us to gain a small power of $T$ by applying Lemma~\ref{lm:time}.
\end{remark}

Assuming Proposition~\ref{prop:nonlinear} holds, we can prove Theorem~\ref{th:lwp} for mean zero initial data. For details on the non-zero mean case, see Appendix~\ref{ap:mean}.
\begin{proof}[Proof of Theorem~\ref{th:lwp}]
	Let $(s,p)$ satisfy the assumptions in Proposition~\ref{prop:nonlinear}.
	Given $u_0 \in \FL^{s,p}(\T)$ with zero mean, define the map $\Gamma_{u_0}$ as follows
	\begin{align*}
		\Gamma_{u_0} (u)(t) := S(t) u_0 + \int_0^t S(t-t') \N(u,\ldots, u)(t') \, dt'.
	\end{align*}
	Let $R>0$ and $B_R:=\big\{u\in Z^{s,\frac12}_{p}(T) :  \ \|u\|_{Z^{s,\frac12}_{p}(T)} \leq R \big\}.$
	Using Lemma \ref{lm:linear}, Proposition \ref{prop:nonlinear}, and the fact that $\N = \N_0 + \mathcal{R}$, we have
	\begin{align}
		\norm{\Gamma_{u_0}(u) }_{Z^{s,\frac12}_{p}(T)} &\leq C_1 \|u_0\|_{\FL^{s,p}} + C_2\| \mathcal{N}(u, \ldots, u) \|_{Z^{s,-\frac12}_p(T)} \nonumber\\
		& \leq C_1 \norm{u_0}_{\FL^{s,p}} + C_3 T^\theta \|u\|^{m+1}_{Z^{s,\frac12}_p(T)} \label{lwp1}
	\end{align}
	for some $0<\theta <1$, $C_1,C_2,C_3>0$ and any $0<T\leq 1$.
	Similarly, since $\N$, $\N_0$, and $\mathcal{R}$ are multilinear maps, we have
	\begin{align}
		\norm{\Gamma_{u_0}(u) - \Gamma_{u_0}(v) }_{Z^{s,\frac12}_p(T)} &\leq C_4 T^\theta \Big( \|u\|^m_{Z^{s,\frac12}_p(T)} + \|v\|^m_{Z^{s,\frac12}_p(T)} \Big) \| u-v \|_{Z^{s,\frac12}_p(T)} \label{lwp2}
	\end{align}
	for a constant $C_4>$ and any $0<T \leq 1$.
	Choosing $R := 2 C_1 \norm{u_0}_{\FL^{s,p}}$ and $0<T=T(R)\leq 1$ such that $C_3 T^\theta R^{m} \leq \frac12$ and $C_4T^\theta R^m\leq\frac14$, it follows from \eqref{lwp1} and \eqref{lwp2} that $\Gamma_{u_0}$ is a contraction on the closed ball $B_R \subset Z^{s,\frac12}_{p} (T)$. Consequently, $\Gamma_{u_0}$ has a (unique) fixed point $u= \Gamma_{u_0}(u)$ in $B_R$, which gives a solution $u\in Z^{s,\frac12}_p(T)$ to \eqref{gauged} with initial condition $u|_{t=0}=u_0$. The uniqueness in $Z^{s,\frac12}_p(T)$ can be shown using the estimate \eqref{lwp2} with a suitably chosen $T$. 
	The local Lipschitz continuity of the data-to-solution map follows from an analogous argument.
\end{proof}

\subsection{Bilinear and trilinear Strichartz estimates}\label{sec:strichartz}

In order to show Proposition~\ref{prop:nonlinear}, we first establish bilinear and trilinear Strichartz estimates adapted to the Fourier-Lebesgue setting.
Let $\PP$ denote the projection onto mean zero functions and $\PP_0$ denote the mean. The following lemma generalizes the periodic $L^4$-Strichartz of Bourgain in \cite{BO93} to the Fourier-Lebesgue setting.
\begin{lemma}
	The following estimate holds for any $2\leq p\leq\infty$ and $b>\max \{ \frac13, \frac{3p-2}{8p}\}$
	\begin{equation}
	\| \PP \big(\PP u_1 \cdot \PP u_2 \big) \|_{X^{0,0}_{p,2}} \les \|u_1\|_{X^{0,b}_{p,2}} \|u_2\|_{X^{0,b}_{2,2}}. \label{new_l4}
	\end{equation}
\end{lemma}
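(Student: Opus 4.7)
The estimate is a Fourier-Lebesgue analogue of Bourgain's periodic $L^4$-bilinear Strichartz estimate for the Airy equation, so the plan is to pass to the Fourier side, reduce the $\tau$-integral to a spatial frequency-counting problem via Lemma~\ref{lm:convolution}, and then distribute the $\ell^p_n$-norm using Young's convolution inequality.

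Setting $w := \PP(\PP u_1 \cdot \PP u_2)$ and $f_j(\tau, n) := \langle\tau - n^3\rangle^b |\Ft_{t,x}u_j(\tau,n)|$, so that $\|f_j\|_{\ell^{p_j}_n L^2_\tau} = \|u_j\|_{X^{0,b}_{p_j,2}}$ with $(p_1,p_2)=(p,2)$, Cauchy-Schwarz in $\tau_1$ combined with Lemma~\ref{lm:convolution} (applied with $\alpha=\beta=2b\in(1/2,1)$, yielding $\gamma=4b-1$) and the KdV factorization $n^3 - n_1^3 - (n-n_1)^3 = 3nn_1(n-n_1)$ from \eqref{factor1} gives, with $\sigma := \tau - n^3$,
\[
|\Ft_{t,x}w(\tau,n)| \les \sum_{\substack{n_1 \in \Z\\ n_1,n-n_1\neq 0}} \frac{\Phi(\tau,n,n_1)}{\langle \sigma - 3nn_1(n-n_1)\rangle^{(4b-1)/2}},
\]
where $\Phi(\tau,n,n_1)^2 := \int_\R |f_1(\tau_1,n_1)|^2|f_2(\tau-\tau_1,n-n_1)|^2\,d\tau_1$. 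The algebraic input is the counting bound
\[
\#\bigl\{n_1 \in \Z : \langle\sigma - 3nn_1(n-n_1)\rangle \sim M\bigr\} \les |n|^{-1/2}M^{1/2}+1
\]
for $n\neq 0$, $\sigma\in\R$ and dyadic $M\geq 1$, which follows from the fact that $3nn_1(n-n_1) = \tfrac{3n^3}{4}-3n(n_1-n/2)^2$ is a quadratic polynomial in $n_1-n/2$, so preimages of an interval of length $M$ lie in a union of two intervals of $n_1$-length $\les (M/|n|)^{1/2}$.

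Applying Cauchy-Schwarz in $n_1$ and dyadic summation, the supremum $\sup_{n,\sigma}\sum_{n_1}\langle\sigma - 3nn_1(n-n_1)\rangle^{-(4b-1)}$ is bounded by a constant whenever $b>3/8$, which reduces the estimate to $\|\Ft_{t,x} w(\cdot,n)\|_{L^2_\tau}^2 \les (F_1^2 * F_2^2)(n)$ after Fubini, where $F_j(n) := \|f_j(\cdot,n)\|_{L^2_\tau}$. Taking the $\ell^{p/2}_n$-norm and invoking Young's convolution inequality $\ell^{p/2} = \ell^{p/2} * \ell^1$ closes the bound:
\[
\|F_1^2 * F_2^2\|_{\ell^{p/2}_n} \leq \|F_1\|_{\ell^p_n}^2 \|F_2\|_{\ell^2_n}^2 = \|u_1\|_{X^{0,b}_{p,2}}^2 \|u_2\|_{X^{0,b}_{2,2}}^2,
\]
which proves the estimate uniformly in $p\in[2,\infty]$ for $b>3/8$. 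To reach the sharper threshold at the $p=2$ endpoint, I would invoke Bourgain's refined $L^4$-Strichartz for the periodic Airy equation, $\|u\|_{L^4_{t,x}} \les \|u\|_{X^{0,b}_{2,2}}$ for $b>1/3$, obtained by exploiting the integrality of $3nn_1(n-n_1)$ and the divisor bound to improve the counting in the regime $M\les |n|$; H\"older's inequality $\|\PP(\PP u_1\cdot \PP u_2)\|_{L^2_{t,x}} \leq \|u_1\|_{L^4}\|u_2\|_{L^4}$ then gives the estimate at $p=2$ for $b>1/3$. Bilinear interpolation in $(1/p,b)$ along the Fourier-Lebesgue scale $X^{0,b}_{p,2}$ between the endpoints $(1/2,\,1/3+)$ and $(0,\,3/8+)$ produces a $p$-dependent threshold of the form stated in the lemma.

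The main obstacle is the sharpening at $p=2$: the direct Cauchy-Schwarz + quadratic level-set count saturates at $b>3/8$, and going below this threshold requires Bourgain's number-theoretic argument specific to the cubic dispersion relation. This is the source of the $1/3$ in the statement and the reason the exponent is $p$-dependent.
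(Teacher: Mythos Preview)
Your direct argument correctly establishes the estimate for all $p\in[2,\infty]$ when $b>3/8$: the Cauchy--Schwarz step in $n_1$, combined with the quadratic level-set count, forces $\sup_{n,\sigma}\sum_{n_1}\langle\sigma-3nn_1(n-n_1)\rangle^{-(4b-1)}<\infty$ precisely when $4b-1>1/2$, and the subsequent Young inequality closes cleanly. This already suffices for every application of \eqref{new_l4} in the paper, where $b=\tfrac12-$. However, the interpolation step does not recover the sharper threshold stated in the lemma. Bilinear interpolation between $(1/p,b)=(1/2,\,1/3+)$ and $(0,\,3/8+)$ gives only $b>3/8-1/(12p)$, whereas the lemma claims $b>\max\{1/3,\,3/8-1/(4p)\}$; for every $p>2$ your interpolated threshold is strictly larger. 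Indeed the lemma's threshold is piecewise linear in $1/p$ with a kink at $p=6$, so no two-point interpolation can reproduce it.

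The paper reaches the stated threshold by \emph{not} discarding the $n$-dependence of the counting bound. After a dyadic decomposition in modulation $\jb{\sigma_j}\sim M_j$, it applies H\"older in $n_1$ with a $p$-dependent exponent $q$ rather than Cauchy--Schwarz, producing a factor $|A(\tau,n)|^{1/q}\les (M_2/|n|)^{1/(2q)}$ in the regime $|n|\le M_2$. The decisive extra step is a second H\"older inequality in $n$, which trades the $|n|^{-1/(2q)}$ gain for passage from $\ell^p_n$ to a larger $\ell^r_n$; the choice $1/q=1/2-1/p$ for $p>6$ (and $1/q=1/3+$ for $2\le p\le 6$) then yields $M_1^{1/2}M_2^{1/(2q)}$ and hence the threshold in the statement. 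Your Cauchy--Schwarz in $n_1$ amounts to fixing $q=2$ and then bounding $\sup_n|n|^{-1/2}\les 1$, which is exactly why your direct argument saturates at $b>3/8$.
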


\begin{proof}
	The proof is an adaptation of the standard bilinear argument for $p=2$ (see \cite[Proposition 2.13]{TAO06} for instance) to the Fourier-Lebesgue setting. Let $M_1,M_2\geq 1$ denote dyadic numbers, $\PP_{M_j}$ the projection onto space-time frequencies $\{\jb{\tau - n^3} \sim M_j\}$ and $u_{M_1}= \PP_{M_1}u_1$, $v_{M_2} = \PP_{M_2} u_2$. 
	Since
	\begin{align*}
		\big\| \PP\big( \PP u_1 \cdot \PP u_2 \big) \big\|_{X^{0,0}_{p,2}} & \leq \sum_{M_1,M_2} \big\| \PP\big( \PP u_{M_1} \cdot \PP v_{M_2} \big)  \big\|_{X^{0,0}_{p,2}},
	\end{align*}
	it suffices to show that 
	\begin{align}\label{l4_aux}
		\big\| \PP\big( \PP u_{M_1} \cdot \PP v_{M_2} \big) \big\|_{X^{0,0}_{p,2}} & \les M_1^{b} M_2^{b} \|u_{M_1}\|_{X^{0,0}_{p,2}} \|v_{M_2}\|_{X^{0,0}_{2,2}}.
	\end{align}
	for any  $b>\max \{ \frac13, \frac{3p-2}{8p}\}$. We assume $M_1\leq M_2$, while the same proof applies to the other case. 
	Using H\"older's inequality, we get
	 \begin{equation}
	 	\big\| \PP\big( \PP u_{M_1} \cdot \PP v_{M_2} \big) \big\|_{X^{0,0}_{p,2}} \les \Big\| M_1^\frac1q\big| A(\tau,n)\big| ^\frac1q \Big[ |\ft{u}_{M_1}|^{q'}*|\ft{v}_{M_2}|^{q'}\Big] ^{\frac{1}{q'}}\Big\|_{\ell ^p_nL^2_\tau}, \label{l4_aux2}
	 \end{equation}
 	for $q>1$, where 
 	$$A(\tau,n) = \big\{ n_1\in\Z: \ 0 \neq 3nn_1(n-n_1) = -\tau + n^3 + \mathcal{O} (M_2) \big\}.$$
  	Since we can rewrite the above condition as 
  	$\big(n_1 - \frac{n}{2}\big)^2 = \frac{1}{3n}(\tau - \frac{n^3}{4}) + \mathcal{O}(\frac{M_2}{3|n|}), $
	then we conclude that there are at most $\mathcal{O}\big( 1+(\frac{M_2}{|n|})^{\frac{1}{2}} \big)$ elements in $A(\tau,n)$. We first consider the case when $|n|>M_2$ and thus $|A(\tau ,n)|\les 1$. From \eqref{l4_aux2} with $q=2$ and Young's inequality, we have
	\begin{align*}
		\text{RHS of } \eqref{l4_aux2} & \les M_1^\frac12\Big\| \Big[ |\ft{u}_{M_1}|^2*|\ft{v}_{M_2}|^2\Big] ^\frac12 \Big\|_{\ell ^p_nL^2_\tau} = M_1^\frac12\big\| |\ft{u}_{M_1}|^2*|\ft{v}_{M_2}|^2\big\|_{\ell ^{\frac{p}{2}}_nL^1_\tau}^\frac12 \\
		& \leq M_1^\frac12 \big\| |\ft{u}_{M_1}|^2\big\|_{\ell ^{\frac{p}{2}}_nL^1_\tau}^\frac12 \big\| |\ft{v}_{M_2}|^2\big\|_{\ell ^1_nL^1_\tau}^{\frac12} = M_1^\frac12 \big\| \ft{u}_{M_1}\big\|_{\ell ^p_nL^2_\tau} \big\| \ft{v}_{M_2}\big\|_{\ell ^2_nL^2_\tau},
	\end{align*}
	which implies \eqref{l4_aux}, since $M_1\leq M_2$. For the case when $|n|\leq M_2$ and $|A(\tau ,n)|\les (\frac{M_2}{|n|})^{\frac{1}{2}}$, we set
	$$
	\Big(\frac1q,\frac1r\Big)=
	\begin{cases}
		\big( \frac13 + \eps,  \frac1p - \frac16 + \eps \big) & \text{for $2\leq p \leq 6$}, \\
		\big( \frac12 - \frac1p , 0 \big) &\text{for $6<p\leq \infty$},
	\end{cases}
	$$
	with sufficiently small $\eps >0$. Note that
$$\frac{1}{p}-\frac{1}{r}<\frac{1}{2q},\quad \frac{q'}{r}+1=\frac{q'}{p}+\frac{q'}{2},\quad 1<q'\leq 2\leq p\leq r\leq \infty .$$
	Applying first H\"older's inequality in $n$, then following the above computation, and using H\"older's inequality in $\tau$, we have
	\begin{align*}
		\text{RHS of } \eqref{l4_aux2} & \les M_1^\frac1q M_2^{\frac{1}{2q}} \Big\| |n|^{-\frac{1}{2q}}\Big[ |\ft{u}_{M_1}|^{q'}*|\ft{v}_{M_2}|^{q'}\Big] ^{\frac{1}{q'}}\Big\|_{\ell ^p_nL^2_\tau} \\
		& \les M_1^\frac1q M_2^{\frac{1}{2q}} \Big\| \Big[ |\ft{u}_{M_1}|^{q'}*|\ft{v}_{M_2}|^{q'}\Big] ^{\frac{1}{q'}}\Big\|_{\ell ^r_nL^2_\tau}  \\
		& \leq M_1^\frac1q M_2^{\frac{1}{2q}} \big\| \ft{u}_{M_1}\big\| _{\ell ^p_nL^{q'}_\tau} \big\| \ft{v}_{M_2}\big\|_{\ell ^2_nL^2_\tau} \\
		& \les M_1^\frac12 M_2^{\frac{1}{2q}}  \big\| \ft{u}_{M_1}\big\|_{\ell ^p_nL^2_\tau} \big\| \ft{v}_{M_2}\big\|_{\ell ^2_nL^2_\tau}. 
	\end{align*}
	Since $M_1\leq M_2$ and $\frac{1}{2}+\frac{1}{2q}\leq 2\max \{ \frac13, \frac{3p-2}{8p}\}+\frac{\eps}{2}$, we obtain \eqref{l4_aux}, from which the estimate follows.
\end{proof}

We can then establish the following estimate.
\begin{lemma}\label{lm:bilinear_new}
	The following estimate holds for any $2\leq p \leq \infty$
	\begin{equation}
		\big\| \PP\big(\PP u_1 \cdot \PP u_2 \big) \big\|_{X^{0,-\frac12+}_{2,2}} \les \|u_1\|_{X^{0,\frac12-}_{p,2}} \|u_2\|_{X^{0,0}_{p',2}} . \label{bilinear_new}
	\end{equation}
\end{lemma}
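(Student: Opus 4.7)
The plan is to derive \eqref{bilinear_new} from the previously established estimate \eqref{new_l4} by a $T^*T$-style duality argument. Since $X^{0,-\frac12+}_{2,2}$ is realized (via the $L^2_{t,x}$ pairing) as the dual of $X^{0,\frac12-}_{2,2}$, it suffices to show
\[
\big| \langle \PP(\PP u_1 \cdot \PP u_2), v\rangle_{L^2_{t,x}} \big| \les \|u_1\|_{X^{0,\frac12-}_{p,2}} \|u_2\|_{X^{0,0}_{p',2}} \|v\|_{X^{0,\frac12-}_{2,2}}
\]
for all $v$ in (a dense subset of) $X^{0,\frac12-}_{2,2}$, and then take the supremum over $\|v\|_{X^{0,\frac12-}_{2,2}}\le 1$.

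First I would move $\PP u_2$ across the pairing. Using that $\PP$ is self-adjoint and idempotent and that pointwise multiplication is commutative, one rewrites
\[
\langle \PP(\PP u_1 \cdot \PP u_2), v\rangle = \langle \PP u_1 \cdot \PP u_2, \PP v\rangle = \langle \PP u_2,\, \PP(\PP u_1 \cdot \PP v)\rangle.
\]
Next, I would pair the two resulting factors in the $X^{0,0}$ topology. By Plancherel followed by H\"older's inequality in $(n,\tau)$ (with exponents $(p',p)$ in $\ell^{\cdot}_n$ and $(2,2)$ in $L^{\cdot}_\tau$), this gives
\[
\big|\langle \PP u_2, \PP(\PP u_1 \cdot \PP v)\rangle\big| \le \|u_2\|_{X^{0,0}_{p',2}} \cdot \|\PP(\PP u_1 \cdot \PP v)\|_{X^{0,0}_{p,2}}.
\]

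The last step is to control the right-hand factor by means of \eqref{new_l4}. Pick $\eps>0$ small enough that $\tfrac12-\eps > \max\{\tfrac13,\tfrac{3p-2}{8p}\}$, which is admissible for every $2\le p\le \infty$ since the latter quantity is bounded above by $\tfrac{3}{8}$. Applying \eqref{new_l4} with $b=\tfrac12-\eps$, $u_1$ placed in the $\ell^p_n$-slot and $v$ in the $\ell^2_n$-slot, yields
\[
\|\PP(\PP u_1 \cdot \PP v)\|_{X^{0,0}_{p,2}} \les \|u_1\|_{X^{0,\frac12-}_{p,2}} \|v\|_{X^{0,\frac12-}_{2,2}}.
\]
Combining the three displays gives \eqref{bilinear_new}. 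There is no genuine obstacle at this stage: the analytic input was already packaged into \eqref{new_l4}, and the only thing to check here is that the duality and H\"older steps are compatible with the anisotropic structure of the $X^{s,b}_{p,q}$ spaces, which they are because the duality acts purely in the temporal weight $\jb{\tau-n^3}$ while the spatial exponents $(p,p')$ are matched by a standard H\"older pairing.
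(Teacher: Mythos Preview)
Your proof is correct and follows essentially the same duality argument as the paper: dualize against $v\in X^{0,\frac12-}_{2,2}$, move $\PP u_2$ across the pairing, apply H\"older in $(n,\tau)$ with exponents $(p',p)$ and $(2,2)$, and then invoke \eqref{new_l4} to control $\|\PP(\PP u_1\cdot \PP v)\|_{X^{0,0}_{p,2}}$. Your explicit verification that $\tfrac12-\eps>\max\{\tfrac13,\tfrac{3p-2}{8p}\}$ is admissible for all $2\le p\le\infty$ is a nice addition that the paper leaves implicit.
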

\begin{proof}
	Using duality and \eqref{new_l4} we obtain
	\begin{align*}
		\big\| \PP \big(\PP u_1 \cdot \PP u_2 \big) \big\|_{X^{0,-\frac12+}_{2,2}} & \les \sup_{\|u_3\|_{ X^{0,\frac12-}_{2,2}}\leq 1}\bigg| \intt_{0 = \tau_1 + \tau_2 + \tau_3} \sum_{\substack{0=n_1+n_2+n_3\\ n_1n_2n_3\neq0}} \ft{u}_1(\tau_1, n_1) \ft{u}_2(\tau_2, n_2) \ft{u}_3(\tau_3,n_3) \bigg| \\
		& \leq  \sup_{\|u_3\|_{ X^{0,\frac12-}_{2,2}}\leq 1} \|u_2\|_{X^{0,0}_{p',2}}\| \PP \big(\PP u_1 \cdot \PP u_3 \big) \|_{X^{0,0}_{p,2}} \\
		& \les  \sup_{\|u_3\|_{ X^{0,\frac12-}_{2,2}}\leq 1} \|u_2\|_{X^{0,0}_{p',2}} \|u_1\|_{X^{0,\frac12-}_{p,2}} \|u_3\|_{X^{0,\frac12-}_{2,2}} \\
		& \les \|u_1\|_{X^{0,\frac12-}_{p,2}} \|u_2\|_{X^{0,0}_{p',2}},
	\end{align*}
	as intended.
\end{proof}

\begin{lemma}
	The following estimate holds for any $1\leq p,q \leq \infty$
	\begin{align}
	\| \PP_0 (u_1 u_2) u_3\|_{X^{0,0}_{p,2}} & \les \|u_1\|_{X^{0,\frac13+}_{q,2}} \|u_2\|_{X^{0,\frac13+}_{q',2}} \|u_3\|_{X^{0,\frac13+}_{p,2}}. \label{trilinear0}
	\end{align}
\end{lemma}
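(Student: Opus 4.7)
The plan is to reduce the estimate to a mixed-norm Strichartz embedding via Plancherel in the time variable and carefully ordered H\"older inequalities. By Plancherel in $t$ one has $\|u\|_{X^{0,0}_{p,2}}=\|\ft{u}(t,n)\|_{\ell^p_n L^2_t}$, and since $v(t):=\PP_0(u_1u_2)(t)$ is independent of $x$, the spatial Fourier transform of $v\cdot u_3$ is simply $v(t)\ft{u}_3(t,n)$. H\"older in $t$ with $\frac12=\frac13+\frac16$ therefore yields
\[
\|\PP_0(u_1u_2)u_3\|_{X^{0,0}_{p,2}}\les \|v\|_{L^3_t}\,\|\ft{u}_3(t,n)\|_{\ell^p_n L^6_t}.
\]

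The central ingredient is the mixed-norm embedding
\[
\|\ft{u}(t,n)\|_{\ell^r_n L^6_t}\les \|u\|_{X^{0,\frac13+}_{r,2}},\qquad 1\leq r\leq\infty,
\]
which I would prove by applying the one-dimensional Sobolev embedding $H^{\frac13+}(\R_t)\hookrightarrow L^6(\R_t)$ pointwise in $n$ to the dispersion-modulated profile $e^{-itn^3}\ft{u}(t,n)$ (whose time Fourier transform is $\ft{u}(\tau+n^3,n)$, so that the natural modulation weight $\jb{\tau-n^3}$ emerges after a change of variables), and then taking $\ell^r_n$. Applied with $r=p$, this handles the factor involving $u_3$.

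For the remaining factor $\|v\|_{L^3_t}$, I would expand $v(t)=\sum_{n_1\in\Z}\ft{u}_1(t,n_1)\ft{u}_2(t,-n_1)$, move the sum outside by the triangle inequality in $L^3_t$, apply H\"older in $t$ with $\frac13=\frac16+\frac16$, and finally H\"older in $n_1$ with exponents $q,q'$ (using that the $\ell^{q'}_{n}$-norm is invariant under $n\mapsto -n$) to obtain
\[
\|v\|_{L^3_t}\les \sum_{n_1}\|\ft{u}_1(t,n_1)\|_{L^6_t}\|\ft{u}_2(t,-n_1)\|_{L^6_t}\les \|\ft{u}_1\|_{\ell^q_n L^6_t}\|\ft{u}_2\|_{\ell^{q'}_n L^6_t}.
\]
A second application of the mixed-norm embedding bounds this by $\|u_1\|_{X^{0,\frac13+}_{q,2}}\|u_2\|_{X^{0,\frac13+}_{q',2}}$ and closes the estimate. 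The one subtle point is the ordering: bringing the sum outside the $L^3_t$-norm at the very first step sidesteps the need to exchange $\ell^q_n$ and $L^6_t$ in the wrong direction of Minkowski's integral inequality, which is precisely what allows the estimate to hold uniformly over all $1\leq p,q\leq\infty$ rather than only for exponents satisfying one of the two inclusions between $\ell^q_nL^6_t$ and $L^6_t\ell^q_n$.
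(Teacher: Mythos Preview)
Your proof is correct and is essentially the time-side version of the paper's argument: the paper works on the $\tau$-side, bounding the triple convolution by Young's inequality $L^{6/5}_\tau * L^{6/5}_\tau * L^{6/5}_\tau \to L^2_\tau$ together with H\"older in $n_1$, and then uses $\|f\|_{L^{6/5}_\tau}\les \|\jb{\tau}^{\frac13+}f\|_{L^2_\tau}$, which is exactly the Plancherel dual of your Sobolev embedding $H^{\frac13+}_t\hookrightarrow L^6_t$ applied to the modulated profile. The two arguments coincide step for step under the time/frequency duality, so there is no substantive difference.
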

\begin{proof}
	By Young's and H\"older's inequalities, it follows that
	\begin{align*}
	\|\PP_0(u_1 u_2) u_3\|_{X^{0,0}_{p,2}} & \les \bigg\| \sum_{n_1} \intt_{\tau = \tau_1 + \tau_2 + \tau_3} \ft{u}_1(\tau_1,n_1) \ft{u}_2(\tau_2, -n_1) \ft{u}_3(\tau_3,n) \bigg\|_{\l^p_n L^2_\tau} \\
	& \les \|u_1\|_{X^{0,0}_{q,\frac65}} \|u_2\|_{X^{0,0}_{q', \frac65}} \|u_3\|_{X^{0,0}_{p,\frac65}} \\
	& \les \|u_1\|_{X^{0,\frac13+}_{q,2}} \|u_2\|_{X^{0,\frac13+}_{q', 2}} \|u_3\|_{X^{0,\frac13+}_{p,2}}.
	\end{align*}
\end{proof}

The following trilinear estimate can be seen as a multilinear analogue of the $L^6$-Strichartz in \cite{BO93} adapted to the Fourier-Lebesgue spaces.
\begin{lemma}
	Let $2\leq p\leq \infty$, $n_j$ denote the spatial frequency corresponding to $\ft{u}_j$, $j=1,2,3$, and assume that $(n_1+n_2) (n_1+n_3)(n_2+n_3) \neq 0$. We have the following estimate
	\begin{align}
	\| u_1 u_2 u_3\|_{X^{0,0}_{p,2}} &\les \|u_1\|_{X^{0+, \frac12}_{p,2}} \|u_2\|_{X^{0+, \frac12}_{2,2}} \|u_3\|_{X^{0+,\frac12}_{2,2}}.\label{l6-p}
	\end{align}	
\end{lemma}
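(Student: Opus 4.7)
The estimate is a Fourier--Lebesgue analogue of the trilinear $L^6_{t,x}$-Strichartz estimate on $\T$ due to Bourgain, adapted to the asymmetric $\ell^p_n$ setting. The crucial algebraic input is the factorization \eqref{factor2}: setting $\sigma_j=\tau_j-n_j^3$ for $j=1,2,3$ and $\sigma_0=\tau-n^3$, on $n=n_1+n_2+n_3$ and $\tau=\tau_1+\tau_2+\tau_3$ one has
\[
\sigma_0-\sigma_1-\sigma_2-\sigma_3=-3(n_1+n_2)(n_1+n_3)(n_2+n_3).
\]
The nonresonance hypothesis $(n_1+n_2)(n_1+n_3)(n_2+n_3)\neq 0$ then ensures $\max_{0\leq j\leq 3}\langle\sigma_j\rangle\gtrsim 1$, which provides the modulation gain permitting the sharp exponent $\tfrac12$ on the right-hand side of \eqref{l6-p}.

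The plan is to proceed by duality: \eqref{l6-p} is equivalent to
\[
|\langle u_1 u_2 u_3, u_4\rangle_{L^2_{t,x}}|\lesssim \|u_1\|_{X^{0+,1/2}_{p,2}}\|u_2\|_{X^{0+,1/2}_{2,2}}\|u_3\|_{X^{0+,1/2}_{2,2}}\|u_4\|_{X^{0,0}_{p',2}}
\]
for every test function $u_4$. A single application of Cauchy--Schwarz in $L^2_{t,x}$, pairing $(u_1,u_4)$ with $(u_2,u_3)$, reduces the problem to two bilinear $L^2_{t,x}$ estimates:
\[
\|u_2 u_3\|_{L^2_{t,x}}\lesssim \|u_2\|_{X^{0+,1/2}_{2,2}}\|u_3\|_{X^{0+,1/2}_{2,2}},\qquad \|u_1 u_4\|_{L^2_{t,x}}\lesssim \|u_1\|_{X^{0+,1/2}_{p,2}}\|u_4\|_{X^{0,0}_{p',2}}.
\]
The first bound is immediate from H\"older's inequality and Bourgain's $L^4_{t,x}$-Strichartz estimate $\|u\|_{L^4_{t,x}}\lesssim \|u\|_{X^{0+,1/3+}_{2,2}}$. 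The second bound is a Fourier--Lebesgue bilinear estimate in the asymmetric setting, established by a further duality argument that reduces it to the basic $L^4$-type bound \eqref{new_l4}.

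The main technical obstacle lies in the second bilinear estimate, since Lemma~\ref{lm:bilinear_new} as stated yields the norm $X^{0,-1/2+}_{2,2}$ on the left rather than $L^2_{t,x}=X^{0,0}_{2,2}$. I address this by a dyadic decomposition in modulation $\langle\sigma_j\rangle\sim N_j$ and splitting into cases. When $\max_j N_j$ is large, Cauchy--Schwarz in $\tau$ combined with Lemma~\ref{lm:convolution} absorbs the missing $\tfrac12$ weight. When all $N_j\sim 1$, the nonresonance identity forces $\langle\sigma_0\rangle\sim |(n_1+n_2)(n_1+n_3)(n_2+n_3)|\geq 1$, and a counting argument on the resulting level set of the KdV-type phase (as in the proof of \eqref{new_l4}) closes the bound. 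The small $0+$ loss in the spatial regularity exponent provides the geometric decay needed to sum over dyadic scales.
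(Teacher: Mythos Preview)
Your reduction has a genuine gap. Once you apply Cauchy--Schwarz in $L^2_{t,x}$ to split the quadrilinear form into the pair $\|u_1\bar u_4\|_{L^2}\cdot\|u_2 u_3\|_{L^2}$, the frequencies $(n_1,n_4)$ and $(n_2,n_3)$ are completely decoupled, and the trilinear nonresonance constraint $(n_1+n_2)(n_1+n_3)(n_2+n_3)\neq 0$ can no longer be invoked for the $(u_1,u_4)$ piece: that bilinear expression sees only $n_1$ and $n_4=n$, not $n_2$ or $n_3$. Your proposed fix (``when all $N_j\sim 1$, the nonresonance identity forces $\langle\sigma_0\rangle\sim|(n_1+n_2)(n_1+n_3)(n_2+n_3)|$'') therefore makes no sense at the level of the bilinear estimate you are trying to prove. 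In fact the claimed bilinear bound $\|u_1 u_4\|_{L^2_{t,x}}\lesssim \|u_1\|_{X^{0+,1/2}_{p,2}}\|u_4\|_{X^{0,0}_{p',2}}$ is \emph{false} for $p>2$: take $\ft{u}_1(\tau,n)=\mathbf{1}_{N\le n\le 2N}\,\chi(\tau-n^3)$ and $\ft{u}_4(\tau,n)=\delta_{n,n_0}\chi(\tau-n_0^3)$ with $\chi$ a fixed $L^2$ bump; then $\|u_1 u_4\|_{L^2}\sim N^{1/2}$ while $\|u_1\|_{X^{0+,1/2}_{p,2}}\|u_4\|_{X^{0,0}_{p',2}}\sim N^{1/p+}$.

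The paper avoids this by never splitting into bilinear pieces. It keeps the full trilinear convolution, applies Cauchy--Schwarz first in $(\tau_2,\tau_3)$ (using Lemma~\ref{lm:convolution} to collapse the modulation integrals onto $\jb{\tau-n^3+\phi}^{-(1-\eps)/2}$ with $\phi=3(n_1+n_2)(n_1+n_3)(n_2+n_3)$), then in $(n_2,n_3)$, reducing matters to a uniform bound on
\[
\I(\tau,n)=\sum_{n_2,n_3}\frac{1}{\jb{n_2+n_3}^{4\eps}\jb{n-n_2}^{4\eps}\jb{n-n_3}^{4\eps}\jb{\tau-n^3+\phi}^{1-\eps}}.
\]
After the substitution $l_2=n-n_2$, $l_3=n-n_3$, $r=(2n-l_2-l_3)l_2l_3$, the divisor bound shows there are $\lesssim|r|^{\eps'}$ pairs $(l_2,l_3)$ on each level set $\{r=\text{const}\}$, and the remaining sum over $r$ converges by Lemma~\ref{lm:discrete_convolution}. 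The key point is that this counting argument genuinely uses all three factors of $\phi$ simultaneously, which is precisely what your bilinear splitting throws away.
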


\begin{proof}
	Let $\phi = 3(n_1+n_2)(n_1+n_3)(n_2+n_3)$. Using Cauchy-Schwarz inequality and Lemma~\ref{lm:convolution}, we have
	\begin{align*}
	\|u_1u_2u_3\|_{X^{0,0}_{p,2}} 
	& = \bigg\| \sum_{n=n_1+n_2+n_3} \intt_{\tau = \tau_1 + \tau_2 + \tau_3} \prod_{j=1}^3 \ft{u}_j(\tau_j, n_j) \bigg\|_{\l^p_n L^2_\tau} \\
	& \les \bigg\| \sum_{n=n_1+n_2+n_3} \frac{1}{\jb{\tau - n^3 + \phi}^{\frac12(1-\eps)}} \bigg\| \prod_{j=1}^3 \jb{\s_j}^\frac12 \ft{u}_j (n_j, \tau_j) \bigg\|_{L^2_{\tau_2} L^2_{\tau_3}} \bigg\|_{\l^p_n L^2_\tau} ,
	\end{align*}
	for any $\eps>0$.
	Since 
	$\jb{x+y} \les \jb{x}\jb{y}$ for any $x,y$, we have the following for any $\theta>0$
	\begin{align*}
	\frac{1}{\jb{n_1}^{2\theta} \jb{n_2}^{2\theta} \jb{n_3}^{2\theta}} = \frac{1}{\jb{n-n_2-n_3}^{2\theta} \jb{n_2}^{2\theta} \jb{n_3}^{2\theta}} \les \frac{1}{\jb{n_2+n_3}^\theta \jb{n-n_2}^\theta \jb{n-n_3}^\theta}.
	\end{align*}
	By letting $\theta = 2\eps$ and using Cauchy-Schwarz inequality, we obtain
	\begin{equation*}
	\|u_1u_2u_3\|_{X^{0,0}_{p,2}}  \les  \sup_{\tau,n} \big(\I(\tau,n)\big)^\frac12  
	\bigg\| \prod_{j=1}^3\jb{n_j}^{4\eps} \jb{\tau_j - n_j^3}^\frac12 \ft{u}_j (n_j, \tau_j) \bigg\|_{\l^p_n L^2_\tau \l^2_{n_2} \l^2_{n_3}L^2_{\tau_2} L^2_{\tau_3}} , \label{l6-aux}
	\end{equation*}
	where 
	\begin{align*}
	\I(\tau,n) = \sum_{n_2,n_3} \frac{1}{\jb{n_2+n_3}^{4\eps} \jb{n-n_2}^{4\eps} \jb{n-n_3}^{4\eps}\jb{\tau - n^3 + \phi}^{1-\eps}} .
	\end{align*}
	The estimate follows from Minkowski's inequality and showing a uniform bound on $\I(\tau,n)$. Let $a=\tau-n^3$. Then, we can rewrite $\I(\tau,n)$ and estimate it as follows
	\begin{align*}
	\I(\tau,n) 
	& = \sum_{\substack{l_2, l_3 \neq 0\\ 2n-l_2-l_3\neq 0}} \frac{1}{\jb{2n-l_2 - l_3}^{4\eps} \jb{l_2}^{4\eps} \jb{l_3}^{4\eps} \jb{a + 3(2n-l_2 - l_3)l_2 l_3}^{1-\eps}}\\
	& \les \sum_{r\neq 0} \sum_{\substack{(l_2, l_3)\\ r=(2n-l_2-l_3)l_2 l_3}}\frac{1}{\jb{r}^{4\eps} \jb{a + 3(2n-l_2 - l_3)l_2 l_3}^{1-\eps}}\\
	&  \les \sum_{r\neq 0} \frac{1}{\jb{r}^{4\eps} \jb{a + 3r}^{1-\eps}} \ \big| \big\{ (l_2,l_3) : \ r=(2n-l_2-l_3)l_2l_3\big\} \big| .
	\end{align*}
	Now, we employ the divisor bound (see, e.g., \cite[Theorem~278]{HWbook}):
\[ \forall \delta >0,\ \exists C_\delta >0\quad \text{s.t.}\quad \big| \big\{ n \in \mathbb{N}:\text{$n$ divides $N$}\big\} \big| \leq C_\delta N^\delta \quad (\forall N\in \mathbb{N}),\]
to estimate the number of $(l_2,l_3)$'s by
	\begin{align*}
	\big| \big\{ (l_2,l_3) : r=(2n-l_2-l_3)l_2l_3\big\} \big| 
	& \leq \big| \big\{ l_2 :\text{$|l_2|$ divides $|r|$}\big\} \big| \cdot \big| \big\{ l_3 :\text{$|l_3|$ divides $|r|$}\big\} \big| \\
	& \lesssim |r|^{\varepsilon'}
	\end{align*}
for any $\eps'>0$. Choosing $\eps' \leq 2\eps$, for example, gives
	\begin{align*}
	\I(\tau,n) \les \sum_{r\neq 0} \frac{1}{\jb{r}^{2\eps} \jb{a+3r}^{1-\eps}} < \infty,
	\end{align*}
	from Lemma~\ref{lm:discrete_convolution}, and the estimate follows.
\end{proof}
\subsection{Resonant contributions}\label{sec:resonant}
We start by considering $\mathcal{R}$ in \eqref{resonant} where $J$ satisfies $\{0,j\} \subset J$, $\{-j,0\} \subset J$ or $\{-j,j\} \subset J$, for some $j=1, \ldots, m$. The intended estimate essentially follows from the stronger estimate in Lemma~\ref{lm:N1N2}.
\begin{lemma}\label{lm:N1N2}
	Let $2\leq p<\infty$ and $s>1 - \frac1p - \frac{p-2}{mp}$. Then the following estimate holds
	\begin{multline}
		\bigg\| \intt_{\substack{\tau=\tau_0 + \ldots + \tau_m}}  \sum_{n=n_2 + \ldots + n_m}  \jb{n}^{s+1}|\ft{u}_0(\tau_0,n) \ft{u}_1(\tau_1,n)|\prod_{j=2}^m|\ft{u}_j(\tau_j,n_j)| \bigg\|_{\ell ^p_nL^2_\tau} \\
		\les \|u_0\|_{X^{s,0}_{p,2}}  \prod_{\substack{j=1}}^m \|u_j\|_{X^{s,0}_{p,1}}. \label{res1}
	\end{multline}
\end{lemma}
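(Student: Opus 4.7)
The plan is to reduce this estimate to a cascade of classical inequalities: (i) Young's inequality in the time variable $\tau$; (ii) the trivial embedding $\ell^p_n\hookrightarrow \ell^\infty_n$ to handle the two factors $\ft{u}_0,\ft{u}_1$ sharing the common spatial frequency $n$; and (iii) an iterated Young's convolution inequality in $\ell^p_n$ for the remaining $(m-1)$-fold spatial convolution.

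First, applying Young's convolution inequality in $\tau$ to the $(m+1)$-fold $\tau$-convolution, with $\ft{u}_0$ paired to $L^2_\tau$ and each $\ft{u}_j$ ($j\geq 1$) paired to $L^1_\tau$, and setting $g_0(n):=\|\ft{u}_0(\cdot,n)\|_{L^2_\tau}$ and $g_j(n):=\|\ft{u}_j(\cdot,n)\|_{L^1_\tau}$ for $j\geq 1$, the left-hand side is bounded by
\[ \bigg\| \jb{n}^{s+1}\,g_0(n)\,g_1(n)\sum_{n=n_2+\cdots+n_m}\prod_{j=2}^m g_j(n_j)\bigg\|_{\ell^p_n}. \]
Splitting $\jb{n}^{s+1}=\jb{n}^s\cdot\jb{n}^s\cdot\jb{n}^{1-s}$ and applying H\"older in $n$ with exponents $(\infty,\infty,p)$ together with $\ell^p_n\hookrightarrow \ell^\infty_n$, the first two factors produce $\|u_0\|_{X^{s,0}_{p,2}}$ and $\|u_1\|_{X^{s,0}_{p,1}}$, leaving us with the task of proving
\[ \bigg\| \jb{n}^{1-s}\sum_{n=n_2+\cdots+n_m}\prod_{j=2}^m g_j(n_j)\bigg\|_{\ell^p_n}\lesssim \prod_{j=2}^m \|\jb{n_j}^s g_j\|_{\ell^p_{n_j}}. \]

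Next, I would split the summation into a finite number of subregions depending on which $|n_j|$ ($j\geq 2$) is largest; by symmetry it suffices to treat $|n_2|\geq |n_3|\geq\cdots\geq |n_m|$, in which $|n_2|\gtrsim |n|$ and (since $s\leq 1$, the case $s>1$ being easier) $\jb{n}^{1-s}\lesssim \jb{n_2}^{1-s}$. Writing $H_j(n):=\jb{n}^s g_j(n)$, the quantity is then bounded by the $\ell^p_n$-norm of the discrete convolution of $\jb{n}^{1-2s} H_2$ with $\jb{n}^{-s} H_j$ for $j=3,\ldots,m$. Applying the iterated Young inequality in $\ell^p$ with exponents $r_j$ satisfying $\sum_{j=2}^m 1/r_j=(m-2)+1/p$, together with H\"older's inequality $\|\jb{n}^{1-2s}H_2\|_{\ell^{r_2}}\leq \|\jb{n}^{1-2s}\|_{\ell^{a_2}}\|H_2\|_{\ell^p}$ and $\|\jb{n}^{-s}H_j\|_{\ell^{r_j}}\leq \|\jb{n}^{-s}\|_{\ell^{a_j}}\|H_j\|_{\ell^p}$ (with $1/r_j=1/p+1/a_j$), the problem reduces to exhibiting $a_2,\ldots,a_m\in(1,\infty]$ with $1/a_2<2s-1$ (requiring $s>1/2$), $1/a_j<s$ for $j\geq 3$, and $\sum_{j=2}^m 1/a_j=(m-2)(1-1/p)$. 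The supremum of the left-hand sum is $(2s-1)+(m-2)s=ms-1$, so the numerical constraint becomes $ms-1>(m-2)(1-1/p)$, which rearranges to $s>1-\tfrac{1}{m}-\tfrac{m-2}{mp}=1-\tfrac{1}{p}-\tfrac{p-2}{mp}$, matching the threshold in the statement.

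The main obstacle is keeping this bookkeeping tight enough to land strictly below $1-1/p$, so as to lie inside the Fourier-Lebesgue support of the Gibbs measure. Two savings are essential: sending the two factors at the common frequency $n$ into $\ell^\infty_n$ at no cost via $\ell^p_n\hookrightarrow \ell^\infty_n$; and absorbing the surplus weight $\jb{n}^{1-s}$ into $\jb{n_2}^{-s}$ so as to produce the negative power $\jb{n_2}^{1-2s}$, which opens up precisely the room needed for Young's inequality. Omitting either of these savings would force $s>1-1/p$, outside the regime relevant to the invariant measure program.
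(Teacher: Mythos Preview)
Your proof is correct and follows essentially the same route as the paper's: Young's inequality in $\tau$, then $\ell^p_n\hookrightarrow\ell^\infty_n$ to strip off $\ft u_0$ and $\ft u_1$, and finally an estimate on the $(m-1)$-fold spatial convolution after passing the surplus weight $\jb{n}^{1-s}$ onto $\jb{n_2}$. The only difference is in this last step: the paper applies H\"older in the hyperplane sum with exponents $(p',p)$ and then bounds the resulting weight sum $\sum_{n=n_2+\cdots+n_m}\jb{n_2}^{-(2s-1)p'}\jb{n_3}^{-sp'}\cdots\jb{n_m}^{-sp'}$ via the discrete convolution Lemma~\ref{lm:discrete_convolution}, whereas you use the iterated Young inequality in $\ell^p$ followed by H\"older to separate weights from $H_j$. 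These are dual formulations of the same estimate and produce the identical threshold $s>1-\tfrac1p-\tfrac{p-2}{mp}$. One small bookkeeping point you should make explicit: Young's inequality also requires $r_j\geq 1$, i.e.\ $1/a_j\leq 1-1/p$; in the regime $s<1-1/p$ this is automatic from $1/a_j<s$ and $1/a_2<2s-1$, while for $s\geq 1-1/p$ the estimate is strictly easier.
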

\begin{proof}
	Assume that $|n_2| \geq \ldots \geq |n_m|$, without loss of generality. Then $|n| \les |n_2|$. Using Young's and H\"older's inequalities, we have
	\begin{align*}
		\text{LHS of } \eqref{res1}& \les \|u_0\|_{X^{s, 0}_{\infty,2}} \|u_1\|_{X^{s,0}_{\infty,1}} \bigg\| \jb{n}^{1-s} \sum_{n=n_2+ \ldots + n_m} \prod_{j=2}^m \|\ft{u}_j(n_j)\|_{L^1_\tau} \bigg\|_{\l^p_n} \\
		& \les \sup_n \bigg( \sum_{n=n_2 + \ldots +n_m} \bigg| \frac{1}{\jb{n_2}^{2s-1} \jb{n_3}^s \cdots \jb{n_m}^s}  \bigg|^{p'} \bigg)^{\frac{1}{p'}} \|u_0\|_{X^{s, 0}_{p,2}} \prod_{j=1}^m \|u_j\|_{X^{s, 0}_{p,1}}.
	\end{align*}
	The estimate follows from Lemma~\ref{lm:discrete_convolution} for $s>\max(\frac12, 1 - \frac1p - \frac{p-2}{mp}) = 1 - \frac1p - \frac{p-2}{mp}$.
\end{proof}
The following lemma establishes an estimate for $\mathcal{R}$ in \eqref{resonant} when $J\subset\{-m, \ldots, -1\}$. 

\begin{lemma}
	For $m\geq 3$, $2\leq p<\infty$ and $s>1 - \frac1p - \min\big(\frac1p, \frac{p-2}{mp}, \frac{1}{2(m-3)\1_{m>4}}\big)$, we have
	\begin{multline}\label{res_Bij}
		\bigg\| \intt_{\tau = \tau_0 + \ldots + \tau_m}\sum_{n=n_2 + \ldots +n_m}  \frac{\jb{n}^s|n_2|}{\jb{\tau-n^3}^{\frac12-}} |\ft{u}_0(\tau_0, -n_2) \ft{u}_1(\tau_1, n_2) \ft{u}_2(\tau_2, n_2) | \prod_{j=3}^m |\ft{u}_j(\tau_j,n_j)| \bigg\|_{\l^p_n L^2_\tau} \\
		\les \|u_0\|_{X^{s,0}_{p,2}}\prod_{j=1}^m \|u_j\|_{X^{s,0}_{p,1}}.
	\end{multline}
\end{lemma}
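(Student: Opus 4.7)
The plan is to reduce the estimate to a discrete multilinear inequality in $\ell^p_n$ by using Young's convolution in $\tau$ together with a careful frequency-based case split, in the spirit of the proof of Lemma~\ref{lm:N1N2}.

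First, for each fixed $n$, I apply Young's convolution inequality in $\tau$ to control
\[ \bigl\| \jb{\tau-n^3}^{-1/2+} (F_0(\cdot,-n_2)*_\tau F_1(\cdot,n_2)*_\tau F_2(\cdot,n_2)*_\tau F_3(\cdot,n_3)*_\tau\cdots *_\tau F_m(\cdot,n_m))\bigr\|_{L^2_\tau} \]
by $\|F_0(\cdot,-n_2)\|_{L^2_\tau}\prod_{j\geq 1}\|F_j(\cdot,n_j)\|_{L^1_\tau}$, using the trivial bound $\jb{\tau-n^3}^{-1/2+}\leq 1$ and placing $F_0$ in $L^2_\tau$ with the remaining factors in $L^1_\tau$. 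Setting $f_0(n):=\|F_0(\cdot,n)\|_{L^2_\tau}$ and $f_j(n):=\|F_j(\cdot,n)\|_{L^1_\tau}$ for $j\geq 1$, the estimate reduces to the discrete multilinear inequality
\[ \bigg\| \jb{n}^s \sum_{n=n_2+\ldots+n_m} |n_2|\, f_0(-n_2)f_1(n_2)f_2(n_2)\prod_{j=3}^m f_j(n_j)\bigg\|_{\ell^p_n} \les \prod_{j=0}^m \|\jb{\cdot}^s f_j\|_{\ell^p}, \]
which corresponds to the stated right-hand side after identifying $\|\jb{\cdot}^s f_0\|_{\ell^p}$ with $\|u_0\|_{X^{s,0}_{p,2}}$ and $\|\jb{\cdot}^s f_j\|_{\ell^p}$ with $\|u_j\|_{X^{s,0}_{p,1}}$.

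Next, I split the inner sum into two cases according to the largest of $|n_2|,|n_3|,\ldots,|n_m|$. In Case~A ($|n_2|$ largest), $\jb{n}\les\jb{n_2}$ yields $\jb{n}^s|n_2|\les \jb{n_2}^{s+1}$, distributed as $\jb{n_2}^{1-2s}\cdot \prod_{j=0}^2\jb{n_2}^s$; since $s>1/2$ the scalar weight $\jb{n_2}^{1-2s}$ is bounded, and Minkowski in $n_2$ together with Young's convolution in $(n_3,\ldots,n_m)$ and iterated application of Lemma~\ref{lm:discrete_convolution} to the weights $\jb{\cdot}^{-s}$ closes the estimate. In Case~B (some $|n_j|$ with $j\geq 3$ largest, WLOG $j=3$), we have $\jb{n}^s\les\jb{n_3}^s$ so the derivative $|n_2|$ is decoupled from the outer weight; we therefore absorb it into the triple at $\pm n_2$ via $|n_2|=\jb{n_2}^{1-3s}\prod_{j=0}^2\jb{n_2}^s$ and apply H\"older in $n_2$ with the weight $\jb{n_2}^{1-3s}$ placed in $\ell^q$ for $q$ slightly larger than $(3s-1)^{-1}$, after which Young's inequality for the remaining $(m-3)$-fold convolution in $(n_4,\ldots,n_m)$ and Lemma~\ref{lm:discrete_convolution} again conclude the argument.

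The explicit lower bound $s>1-\frac{1}{p}-\min\bigl(\frac{1}{p},\frac{p-2}{mp},\frac{1}{2(m-3)\1_{m>4}}\bigr)$ arises from balancing these exponents: the $\frac{1}{p}$ term reflects the $\ell^{p'}$-summability of a single weight $\jb{\cdot}^{-s}$ in the simpler bound, the $\frac{p-2}{mp}$ term comes from the H\"older pairing of the triple at $\pm n_2$ against $\jb{n_2}^{1-3s}$ in the correct $\ell^q$-space, and the $\frac{1}{2(m-3)}$ term (active only for $m>4$) controls the $(m-3)$ iterated applications of Lemma~\ref{lm:discrete_convolution} needed for the $\ell^{p'}$-summability of the convolved weights once the largest $n_3$ has been extracted. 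The main obstacle is Case~B: since $|n_2|$ cannot be absorbed into the outer weight $\jb{n}^s$, it must be digested entirely through the triple product at $\pm n_2$, which forces the delicate H\"older balance between $\jb{n_2}^{1-3s}$ and the three $\ell^p$-factors; when $p$ is close to $2$ the H\"older exponent becomes very large, pushing $s$ close to the threshold $1-\frac{1}{p}$, and this combined with the control of $(m-3)$ convolutions for large $m$ produces the intricate minimum in the statement.
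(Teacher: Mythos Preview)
Your Case~A is essentially the paper's first subcase ($|n_2|\gtrsim|n_3|$) and, carried out via the H\"older argument of Lemma~\ref{lm:N1N2}, does produce the threshold $s>1-\frac1p-\frac{p-2}{mp}$. The genuine gap is in Case~B. By bounding $\jb{\tau-n^3}^{-\frac12+}\leq 1$ at the very start you reduce to a purely spatial $\ell^p_n$ inequality, but once $\jb{n}^s$ is absorbed into $\jb{n_3}^s$ and $|n_2|$ into the triple at $\pm n_2$, the factors $u_4,\ldots,u_m$ carry only the bare weight $\jb{n_j}^{-s}$ with $s<1-\frac1p$, which is not $\ell^{p'}$-summable. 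The extra decay $\jb{n_2}^{1-3s}$ cannot be transferred: after $n_3$ absorbs the constraint $n=n_2+\cdots+n_m$, the variables $n_2,n_4,\ldots,n_m$ are free and independent, so no H\"older or Young manipulation moves decay from $n_2$ to $n_j$, $j\geq4$. (Your phrase ``H\"older in $n_2$ with the weight in $\ell^q$'' does not parse here, since there is no separate $n_2$-sum.) A concrete failure: for $m=10$, $p=10$ the stated threshold is $s>1-\tfrac1{10}-\tfrac1{14}\approx0.829$, but your Young-convolution step forces each $B_j=\jb{\cdot}^{-s}g_j$ into $\ell^1$, i.e.\ $s>1-\tfrac1p=0.9$.

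The paper does \emph{not} discard the modulation weight in this regime. For $|n_2|\ll|n_3|$ with $m\geq4$ it keeps the $X^{0,-\frac12+}_{p,2}$ structure and, when $n_3+n_4\neq0$, applies the bilinear Strichartz estimate \eqref{new_l4} to $\PP(D^su_3\cdot u_4)$; this dispersive gain is precisely what produces the $\tfrac1{2(m-3)}$ contribution and cannot be recovered from a purely spatial argument. The paper also treats the resonant subcase $n_3+n_4=0$ separately (this is where the $\tfrac1p$ term in the $\min$, via $s>1-\tfrac2p$, arises), a distinction your sketch omits entirely. Note finally that the paper's own argument invokes \eqref{new_l4} and hence uses $X^{s,\frac12-}$-type control on $u_3,u_4$, so the $X^{s,0}$ norms displayed on the right of the lemma are slightly weaker than what the proof actually consumes; attempting to establish the estimate with only $b=0$ input, as you do, is aiming at a harder target than the paper's proof addresses.
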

\begin{proof}
	Assume that $|n_3|\geq \ldots \geq |n_m|$, without loss of generality.
	We will consider two cases: $|n_2| \ges |n_3|$ and $|n_2| \ll |n_3|$.
	If $|n_2| \ges |n_3|$, using Young's and H\"older's inequalities gives
	\begin{align*}
		\text{LHS of } \eqref{res_Bij}& \les \sup_n \bigg( \sum_{n=n_2 + \ldots + n_m} \bigg| \frac{1 }{\jb{n_2}^{2s-1} \jb{n_3}^s \cdots \jb{n_m}^s } \bigg|^{p'} \bigg)^{\frac{1}{p'}} \|u_0\|_{X^{s, 0}_{p,2}} \prod_{j=1}^m \|u_j\|_{X^{s, 0}_{p,1}}.
	\end{align*}
	The estimate follows from Lemma~\ref{lm:discrete_convolution} if 
	\begin{equation}\label{nonres_cond}
		s> 1 - \frac1p - \frac{p-2}{mp}.
	\end{equation}
	If $|n_2| \ll |n_3|$, then $|n|\les |n_3|$. Let $v(t,x) = \Ft_x^{-1}\big(\jb{\cdot}\ft{u}_0(t,-\cdot) \ft{u}_1(t,\cdot) \ft{u}_2(t,\cdot)\big)$. If $m=3$, from Young's inequality, we have
	\begin{align*}
		\text{LHS of } \eqref{res_Bij} & \les \|v \cdot D^s u_3\|_{X^{0,-\frac12+}_{p,2}} \les \|v\|_{X^{0,0}_{1,1}} \|u_3\|_{X^{s,0}_{p,2}}.
	\end{align*}
	Note that using Young's inequality in time and H\"older's in space, we have
	\begin{equation}
		\|v\|_{X^{0,0}_{1,1}} \les \prod_{j=0}^2 \|\jb{n}^\frac13 \ft{u}_j(\tau,n)\|_{\l^3_n L^1_\tau} \les  \prod_{j=0}^2 \|u_j\|_{X^{s, 0}_{p,1}}, \label{R3_v}
	\end{equation}
	for $s\geq \frac13$, $2\leq p \leq 3$ or $s>\frac23 - \frac1p$, $p>3$, which are less restrictive than \eqref{nonres_cond}.
	If $m\geq 4$ and $n_3+n_4 \neq 0$, we use Young's inequality to obtain
	\begin{align*}
		\text{LHS of } \eqref{res_Bij}& \les \big\| v \cdot D^s u_3 \cdot u_4 \cdots u_m \big\|_{X^{0,-\frac12+}_{p,2}} 
		\les \|v\|_{X^{0,0}_{1,1}} \|\PP \big(D^s u_3 \cdot u_4\big) \|_{X^{0,0}_{p,2}} \prod_{j=5}^m \|u_j\|_{X^{0,0}_{1,1}}.
	\end{align*}
	The first factor is estimated as in \eqref{R3_v}. For the remaining factors we use \eqref{new_l4} and H\"older's inequality, and the fact that $|n_4| \geq \ldots \geq |n_m|$. The estimate follows if $s>\frac12-\frac1p$ and, for $m>4$, if $s>1 - \frac1p - \frac{1}{2(m-3)}$.
	If $n_3 + n_4 = 0$, we have the following
	\begin{align*}
		\text{LHS of } \eqref{res_Bij}  &\les \bigg\| \sum_{\substack{n=n_2 + n_5 \\+ \ldots + n_m}} \|\ft{v}(n_2)\|_{L^1_\tau} \prod_{j=5}^m \|\ft{u}_j (n_j) \|_{L^1_\tau} \times \sum_{n_3} \jb{n_3}^s \|\ft{u}_3(n_3)\|_{L^2_\tau} \|\ft{u}_4(-n_3)\|_{L^1_\tau} \bigg\|_{\l^p_n} \\
		& \les \|u_3\|_{X^{\frac{s}{2}, 0}_{2,2}} \|u_4\|_{X^{\frac{s}{2}, 0}_{2,1}} \|v\|_{X^{3s-1, 0}_{p,1}} \prod_{j=5}^m \|u_j\|_{X^{s, 0}_{p,1}} \times \sup_n J(n)^{\frac{1}{p'}}
	\end{align*}
	where from Lemma~\ref{lm:discrete_convolution} given that $s>\frac13$ and, when $m>4$, that $s>1 - \frac1p - \frac{2p-3}{(m-1)p}$, we have
	\begin{align*}
		J(n) = \sum_{n = n_2 + n_5 + \ldots + n_m} \bigg|\frac{1}{\jb{n_2}^{3s-1} \jb{n_5}^s \cdots \jb{n_m}^s} \bigg|^{p'} \les 1.
	\end{align*}
	The estimate follows from H\"older's inequality given that $s> 1 - \frac2p$.
\end{proof}

Lastly, we consider $\mathcal{R}$ restricted to $J = J_+ \cup (- J_-)$, where $J_+,J_- \subset\{1,\ldots, m\}$ are \emph{disjoint} sets and $|J_+|\geq 1$. The following lemma estimates the case when $J_+ = \{1, \ldots, m\}$.
\begin{lemma}
	The following estimate holds for any $1\leq p <\infty$ and $s\geq \frac1m$
	\begin{align*} 
		\bigg\| \intt_{\tau = \tau_0 + \ldots + \tau_m} \frac{\jb{n}^s |(m-1)n| }{\jb{\tau-n^3}^{\frac12-}} |\ft{u}_0 (\tau_0, -(m-1)n) | \prod_{j=1}^m |\ft{u}_j(\tau_j, n)| \bigg\|_{\l^p_n L^2_\tau} \les \prod_{j=0}^m \|u_j\|_{X^{s, \frac12}_{p,2} \cap X^{s, 0}_{p,1}}.
	\end{align*}
\end{lemma}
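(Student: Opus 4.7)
The defining feature of this (highly) resonant contribution is that all spatial frequencies are uniquely determined by the output frequency $n$: namely, $n_0=-(m-1)n$ and $n_1=\cdots =n_m=n$. In particular, the expression on the left-hand side involves no spatial frequency summation whatsoever---only a convolution in the modulation variable $\tau$. The plan is to exploit this rigid structure so as to completely bypass the multilinear Strichartz-type estimates used in the non-resonant and milder resonant cases.

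Concretely, I will first bound the modulation weight pointwise by $\jb{\tau-n^3}^{-\frac12+}\leq 1$ and then, for each fixed $n$, apply Young's convolution inequality in $\tau$ to the product. Setting $F_j(\tau_j):=|\ft u_j(\tau_j,n_j)|$ and distinguishing one index $j_0\in\{0,1,\ldots,m\}$, Young yields
\[ \bigg\| \intt_{\tau=\tau_0+\cdots +\tau_m}\prod_{j=0}^m F_j(\tau_j)\bigg\|_{L^2_\tau}\les \|F_{j_0}\|_{L^2_\tau}\prod_{j\neq j_0}\|F_j\|_{L^1_\tau}. \]
Next, I will distribute the weight $\jb{n}^{s+1}$ evenly across the $m+1$ factors. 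Since $|n_j|\sim |n|$ for every $j$ (using $|n_0|=(m-1)|n|$), I can write $\jb{n}^{s+1}\les \jb{n}^{1-ms}\prod_{j=0}^m \jb{n_j}^s$, and the residual factor $\jb{n}^{1-ms}$ is bounded by $1$ precisely when $s\geq \tfrac{1}{m}$, which is the hypothesis.

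The final step is to take $\ell^p_n$. I plan to place the distinguished factor $\jb{n_{j_0}}^s\|F_{j_0}\|_{L^2_\tau}$ in $\ell^p_n$, identifying it with $\|u_{j_0}\|_{X^{s,0}_{p,2}}$, and to put each remaining factor $\jb{n_j}^s\|F_j\|_{L^1_\tau}$ into $\ell^\infty_n$ via the trivial embedding $\ell^p_n\hookrightarrow \ell^\infty_n$ for $p<\infty$, so that each is bounded by $\|u_j\|_{X^{s,0}_{p,1}}$. Since $\jb{\tau-n^3}^{1/2}\geq 1$, I have $\|u_{j_0}\|_{X^{s,0}_{p,2}}\leq \|u_{j_0}\|_{X^{s,1/2}_{p,2}}$, and combining gives the desired bound
\[ \text{LHS}\les \|u_{j_0}\|_{X^{s,\frac12}_{p,2}}\prod_{j\neq j_0}\|u_j\|_{X^{s,0}_{p,1}}\leq \prod_{j=0}^m \|u_j\|_{X^{s,\frac12}_{p,2}\cap X^{s,0}_{p,1}}. \]
In contrast with the other resonant cases, there is essentially no analytic difficulty—no divisor bound or dispersive counting argument is required—because the absence of any spatial convolution collapses the problem to Young in $\tau$ followed by H\"older in $n$. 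The arithmetic constraint $s\geq 1/m = 1/(k-1)$ is comfortably weaker than the regularity threshold $s_*(p)$ from Proposition~\ref{prop:nonlinear}.
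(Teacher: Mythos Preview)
Your proposal is correct and follows essentially the same approach as the paper: discard the modulation weight, apply Young's inequality in $\tau$ to place one factor in $L^2_\tau$ and the rest in $L^1_\tau$, then take a supremum in $n$ for all but one factor. The only cosmetic difference is how the spatial weight is distributed---the paper puts $\jb{n}^s$ on $u_0$ and $\jb{n}^{1/m}$ on each of $u_1,\ldots,u_m$, whereas you put $\jb{n_j}^s$ on every factor and discard the residual $\jb{n}^{1-ms}$---but both distributions lead to the identical constraint $s\geq \tfrac{1}{m}$ and the same final bound.
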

\begin{proof}
	Using Young's inequality in time and taking a supremum in $n$, we can estimate the intended quantity by placing $u_0$ in $X^{s,0}_{p,2}$ and the remaining terms in $X^{\frac1m,0}_{\infty,1}$. The estimate therefore follows for $s\geq \frac1m$.
\end{proof}

For the remaining cases, we fix $J_+$ and gather the contributions from $J_-\subset \{ 1, \ldots ,m\}\setminus J_+$. Appealing to symmetry, let $J_+=\{1, \ldots, l\}$ for some $1\leq l \leq m-1$. Then, the net contribution can be rewritten as follows
\begin{align*}
	\mathcal{R}_l (u_0, \ldots, u_m) := \,& \intt_{\tau = \tau_0 + \ldots + \tau_m} \sum_{n=n_0 + \ldots + n_m} \bigg[ \sum_{J_-\subset \{ l+1,\ldots ,m\}} (-1)^{l+|J_-|+1}\1_{\big(\bigcap\limits_{i=1}^l A_i\big) \cap \big(\bigcap\limits_{j\in J_-} A_{-j} \big)} \bigg] \\
&\phantom{XXXXXXXXXX} \phantom{XXXXXXXXXX} \times in_0 \prod_{j=0}^m \ft{u}_j(\tau_j,n_j) \\
= \,& (-1)^{l+1}\intt_{\tau = \tau_0 + \ldots + \tau_m} \sum_{n=n_0 + \ldots + n_m} \1_{\big(\bigcap\limits_{i=1}^l A_i\big) \cap \big(\bigcap\limits_{j=l+1}^m A_{-j}^c \big)} in_0 \prod_{j=0}^m \ft{u}_j(\tau_j,n_j),
\end{align*}
which is estimated in the following lemma.

\begin{lemma}
	Let $1\leq l \leq m-1$ and $m\geq3$. Then, for $2 < p < \infty$ and $s > 1 - \frac1p - \min\big(\frac{p-2}{4p}, \frac{p-2}{mp}, \frac{1}{2m}\big)$, the following holds
	\begin{align}\label{res_ab}
		\bigg\| \frac{\jb{n}^s}{\jb{\tau-n^3}^{\frac12-}} \mathcal{R}_l(u_0, \ldots, u_m) \bigg\|_{\l^p_n L^2_\tau}
		\les \prod_{j=0}^m \|u_j\|_{X^{s, \frac12}_{p,2} \cap X^{s, 0}_{p,1}}.
	\end{align}
	
\end{lemma}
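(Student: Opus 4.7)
The plan is to treat $\mathcal{R}_l$ by combining a ``freezing'' argument for the $l$ diagonal factors $u_1,\ldots,u_l$ (whose Fourier coefficients are evaluated at the output frequency $n$) with a bilinear Strichartz analysis of the remaining multilinear expression, in the spirit of the preceding lemmas in Section~\ref{sec:resonant}. After the freezing step, the estimate reduces to bounding an $(m-l+1)$-ary expression involving $u_0, u_{l+1},\ldots,u_m$ subject to the frequency constraint $n_0 + n_{l+1}+\ldots+n_m = -(l-1)n$ together with the non-resonance conditions $n_0 + n_j \neq 0$ for $j=l+1,\ldots,m$.

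For the freezing step, observe that for each fixed $n$, the Fourier coefficient $\ft{u}_i(\cdot, n)$ is a scalar function of $\tau$. The embedding $\l^p_n \embeds \l^\infty_n$ together with the definition of $X^{s,0}_{p,1}$ gives the pointwise bound $\|\ft{u}_i(\cdot,n)\|_{L^1_\tau}\les \jb{n}^{-s}\|u_i\|_{X^{s,0}_{p,1}}$ for any $n$ and any $p<\infty$. Applying Young's inequality in $\tau$ to the $l$-fold convolution of these coefficients yields a pointwise bound of order $\jb{n}^{-ls}\prod_{i=1}^{l}\|u_i\|_{X^{s,0}_{p,1}}$, which absorbs the diagonal factors into the right-hand side of \eqref{res_ab} and replaces the output weight $\jb{n}^s$ by $\jb{n}^{s(1-l)}$ (nonpositive for $l\geq 2$, and exactly $1$ for $l=1$).

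The remaining $(m-l+1)$-ary expression is handled by a case split based on which of $|n_0|, |n_{l+1}|,\ldots, |n_m|$ is largest. From the constraint $n_0+\sum_{j>l}n_j=-(l-1)n$, at least one of these (or $|n|$ for $l=1$) dominates the derivative factor $|n_0|$. I would transfer $|n_0|$ to whichever factor attains the maximum, and then apply the bilinear Strichartz estimate \eqref{bilinear_new} to an appropriately chosen pair so as to convert the $\jb{\tau-n^3}^{-\frac12+}$ weight into factors in $X^{s,\frac12-}_{p,2}$ and $X^{s,0}_{p',2}$ without a derivative loss. The remaining $m-l-1$ factors are placed in $X^{s,0}_{p,1}$ using Young's inequality in $\tau$, and the ensuing discrete sum over the free frequencies is controlled via Lemma~\ref{lm:discrete_convolution}. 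The three terms in the minimum defining the threshold for $s$ correspond to three competing constraints: the bilinear Strichartz gain (which requires $p>2$ and yields the term $\frac{p-2}{4p}$), the convergence of the convolution sum over the $m-l$ free indices (worst when $l=1$, giving $\frac{p-2}{mp}$), and a degenerate sub-case in which the derivative must be redistributed across all factors through a further application of Young's inequality, producing the term $\frac{1}{2m}$.

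The main technical obstacle is the case $l=1$, where the freezing provides no regularity gain ($\jb{n}^{s(1-l)}=1$) and all $m$ free frequencies must be summed simultaneously. In this regime, the bilinear Strichartz pairing must be carefully chosen so that the transferred derivative is offset by the $\jb{\tau-n^3}^{-\frac12+}$ gain, and the non-resonance condition $n_0+n_j\neq 0$ must be respected to prevent contributions that would belong to a different resonant set $A_{-j}$ already handled in the preceding lemma. Once this balance is struck and the frequency sums are organized to match the structure of Lemma~\ref{lm:discrete_convolution}, the estimate closes precisely at the stated threshold on $s$.
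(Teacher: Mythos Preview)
Your high-level strategy---freeze the diagonal factors $u_1,\dots,u_l$, case-split on the remaining frequencies, then close with a bilinear Strichartz estimate plus Young's inequality---is the same as the paper's. But the execution has a real gap in the choice of bilinear tool and in how the derivative is absorbed. The estimate \eqref{bilinear_new} is not what is used here: its output lies in $X^{0,-\frac12+}_{2,2}$, so to exploit the weight $\jb{\tau-n^3}^{-\frac12+}$ you would need to realise the whole expression as $\PP(\PP v_1\cdot\PP v_2)$ at the output frequency $n$. For $l=1$, however, the residual piece after freezing $u_1$ sits entirely at frequency $0$, so any such pairing collapses (and for $l>1$ the modulation weight is at frequency $n$, not $(1-l)n$). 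The paper instead \emph{discards} $\jb{\tau-n^3}^{-\frac12+}$ and applies \eqref{new_l4}: in the regime $|n_0|\lesssim|n_{l+1}|$ (automatic for $l=1$ once $|n_{l+1}|\geq\dots\geq|n_m|$), one writes $|n_0|\lesssim|n_0|^s|n_{l+1}|^{1-s}$ and bounds $\|\PP(D^s u_0\cdot D^{1-s}u_{l+1})\|_{X^{0,0}_{p,2}}\lesssim\|u_0\|_{X^{s,\frac12}_{p,2}}\|u_{l+1}\|_{X^{1-s,\frac12}_{2,2}}$. The embedding $X^{s,\frac12}_{p,2}\hookrightarrow X^{1-s,\frac12}_{2,2}$ gives the threshold $\tfrac{p-2}{4p}$, and the Young step on $u_{l+2},\dots,u_m$ gives $\tfrac{1}{2m}$. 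A related technical point: for $l=1$ you must keep $u_1$ in $X^{s,0}_{p,1}$ to carry the $\ell^p_n$ norm rather than sending it to $\ell^\infty_n$, since the residual expression is $n$-independent.

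Your attribution of $\tfrac{p-2}{mp}$ to the $l=1$ convolution sum is also off. That term arises only from the $l>1$ subcase $|n|\sim|n_0|\gg|n_{l+1}|$, where the extra gain $\jb{n}^{s(1-l)}$ from freezing allows a direct H\"older/Young argument with no Strichartz input at all; the complementary subcase $|n_0|\lesssim|n_{l+1}|$ then reduces to the $l=1$ analysis above.
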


\begin{proof}
	Fix $1 \leq l \leq m-1$ and assume without loss of generality that $|n_{l+1}| \geq \ldots \geq |n_m|$. 
	If $l=1$, then $0 = n_0 + n_{2} +\ldots + n_m$, $n_0 + n_j \neq 0 $ for $j=2, \ldots, m$ and $|n_0| \les |n_2|$. Taking a supremum in $n$ and using Young's inequality, we obtain
	\begin{align*}
	\text{LHS of } \eqref{res_ab} & \les \bigg\| \intt_{\tau = \tau_0 + \ldots + \tau_m} \jb{n}^s \ft{u}_1(\tau_1, n) \sum_{0=n_0 + n_2 + \ldots+n_m} |n_0| \ft{u}_0(\tau_0, n_0) \prod_{j=2}^m \ft{u}(\tau_j, n_j) \bigg\|_{\l^p_n L^2_\tau} \\
	& \les \|u_1\|_{X^{s,0}_{p,1}} \Big\| \PP (D^s u_0 \cdot D^{1-s} u_2) \cdot u_3 \cdots u_m \Big\|_{X^{0,0}_{\infty,2}} \\
	& \les \|u_1\|_{X^{s,0}_{p,1}} \big\| \PP \big(D^s u_0 \cdot D^{1-s} u_2\big) \big\|_{X^{0,0}_{p,2}} \prod_{j=3}^m \|u_j\|_{X^{0,0}_{q,1}} 
	\end{align*}
	where $(m-2) = \frac1p + \frac{m-2}{q}$. Using \eqref{new_l4} and H\"older's inequality we have
	\begin{align*}
	\big\| \PP \big(D^s u_0 \cdot D^{1-s} u_2\big) \big\|_{X^{0,0}_{p,2}} \prod_{j=3}^m \|u_j\|_{X^{0,0}_{q,1}}  & \les \|u_0\|_{X^{s, \frac12}_{p,2}} \|u_2\|_{X^{1-s, \frac12}_{2,2}} \prod_{j=3}^m \|u_j\|_{X^{0,0}_{q,1}} \\
	& \les \|u_0\|_{X^{s, \frac12}_{p,2}} \|u_2\|_{X^{\frac32-s - \frac1p +, \frac12}_{p,2}} \prod_{j=3}^m \|u_j\|_{X^{\frac1q - \frac1p+,0}_{p,1}}
	\end{align*}
	and we must impose 
	\begin{equation} \label{res_aux1}
	s>\max\Big(\frac12, 1 - \frac1p - \frac{p-2}{4p}, 1 - \frac1p - \frac{1}{2m}\Big) =  1- \frac1p - \min\Big(\frac{p-2}{4p}, \frac{1}{2m}\Big).
	\end{equation}
	Now let $1 < l < m$. Then, $(1-l)n = n_0 + n_{l+1} + \ldots + n_m$ and $n_0+n_j \neq 0$ for $j=l+1, \ldots, m$. Assuming that $|n| \sim |n_0| \gg |n_{l+1}|$ and using H\"older's and Young's inequalities, we have
	\begin{align*}
	\text{LHS of } \eqref{res_ab} & \les \bigg\| \intt_{\tau = \tau_0 + \ldots + \tau_m} \jb{n}^s \prod_{i=1}^l \ft{u}_i(\tau_i , n) \sum_{\substack{(1-l) n = n_0 \\+ n_{l+1} + \ldots + n_m} } |n_0| \ft{u}_0(\tau_0, n_0) \prod_{j=l+1}^m \ft{u}_j(\tau_j, n_j) \bigg\|_{\l^p_n L^2_\tau} \\
	& \les \prod_{i=1}^l \|u_i\|_{X^{s, 0}_{\infty ,1}} \bigg\|   \sum_{\substack{(1-l) n = n_0 \\+ n_{l+1} + \ldots + n_m} } |n_0|^{1-(l-1)s } \|\ft{u}_0 (n_0)\|_{L^2_\tau} \prod_{j=l+1}^m \|\ft{u}_j( n_j)\|_{L^1_\tau} \bigg\|_{\l^p_n } \\
	& \les \|u_0\|_{X^{s, 0}_{p,2}} \prod_{i=1}^l \|u_i\|_{X^{s, 0}_{\infty ,1}} \prod_{j=l+1}^m \|u_j\|_{X^{-\alpha , 0}_{1,1}} ,
	\end{align*}
	where $\al = \frac{ls-1}{m-l}$ and we have assumed that $\alpha >0$.
	The estimate follows from Young's inequality if $s> \max\big(\frac1l, 1- \frac1p - \frac{(l-1)(p-1) -1}{mp}\big)$, less restrictive than 
	\begin{align*}
	s>\max \Big( \frac12, 1-\frac1p -\frac{p-2}{mp}\Big) = 1-\frac1p -\frac{p-2}{mp}.
	\end{align*}
	If $|n_0| \les |n_{l+1}|$, then we proceed as in the case when $l=1$, 
	\begin{align*}
	\text{LHS of } \eqref{res_ab} & \les \prod_{i=1}^l \|u_i\|_{X^{s, 0}_{p,1}} \big\| \PP \big(D^s u_0 \cdot D^{1-s} u_{l+1} \big) \cdot u_{l+2} \cdots u_m \big\|_{X^{0,0}_{\infty, 2}} \\
	& \les \big\| \PP \big(D^s u_0 \cdot D^{1-s} u_{l+1} \big) \big\|_{X^{0,0}_{p,2}} \bigg(\prod_{i=1}^l \|u_i\|_{X^{s, 0}_{p,1}} \bigg) \bigg( \prod_{j=l+2}^m \|u_j\|_{X^{0,0}_{q,1}} \bigg),
	\end{align*}
	where $(m-l-1) = \frac1p + \frac{m-l-1}{q}$.  Using \eqref{new_l4} and H\"older's inequality, we obtain
	\begin{align*}
	\text{LHS of } \eqref{res_ab}& \les \|u_0\|_{X^{s, \frac12}_{p,2}} \|u_{l+1}\|_{X^{1-s, \frac12}_{2,2} }  \bigg(\prod_{i=1}^l \|u_i\|_{X^{s, 0}_{p,1}} \bigg) \bigg( \prod_{j=l+2}^m \|u_j\|_{X^{0,0}_{q,1}} \bigg)  \\
	& \les \|u_0\|_{X^{s, \frac12}_{p,2}} \|u_{l+1}\|_{X^{\frac32-s - \frac1p+, \frac12}_{p,2} }  \bigg(\prod_{i=1}^l \|u_i\|_{X^{s, 0}_{p,1}} \bigg) \bigg( \prod_{j=l+2}^m \|u_j\|_{X^{\frac1q - \frac1p+,0}_{p,1}} \bigg)
	\end{align*}
	and the estimate follows if $s>1 - \frac1p - \min(\frac{p-2}{4p},\frac{1}{2(m-l+1)\1_{ l+2 \leq m}} )$, which is less restrictive than \eqref{res_aux1}.
\end{proof}

\subsection{Non-resonant contributions}
In this section, we establish the estimate for the non-resonant contribution $\mathcal{N}_0$ in \eqref{nonresonant}.
%
Without loss of generality, we can assume that $|n_1| \geq \ldots \geq |n_m|$. We further split the non-resonant contribution as follows
\begin{align*}
\mathcal{N}_0 &= \mathcal{N}_1 + \mathcal{N}_3 + \ldots + \mathcal{N}_m && \text{if } m \text{ is odd}, \\
\mathcal{N}_0 &= \mathcal{N}_1 + \mathcal{N}_3 + \ldots + \mathcal{N}_{m-1} && \text{if } m \text{ is even},\nonumber
\end{align*}
where $\mathcal{N}_\al$, for odd $1\leq \al \leq m-1$, corresponds to $\mathcal{N}_0$ further restricted to the region
\begin{align*}
\Lambda_\al(n) =\big\{ (n_0, \ldots, n_m) \in \Z_*^{m+1}: \ &   |n_1| \geq \ldots \geq |n_m|, \\ &n_j + n_{j+1} = 0, \ 1\leq j\leq \al-1 \text{ odd}, \\& n_\al + n_{\al+1} \neq 0 \big\}
\end{align*}
and $\Lambda_m(n) = \big\{ (n_0, \ldots, n_m)\in \Z_*^{m+1}: \ |n_1| \geq \ldots \geq |n_m| , \ n_1 + n_2 = \ldots = n_{m-2} + n_{m-1} = 0 \big\}$.

We will start by estimating the most difficult contribution $\mathcal{N}_1$. Guided by Lemma~\ref{lm:resonance}, we will consider the following case separation:


\begin{itemize}
	\item \textbf{Case 1}: $|n|\sim|n_0|\gg|n_1|$
	\begin{itemize}
		\item \textbf{Case 1.1:} $|n_0|^2 |n - n_0| \les |\phi|$
		\item \textbf{Case 1.2:} $|n_0|^2 |n-n_0| \les |n_1n_2n_3|$
	\end{itemize}
	\item \textbf{Case 2}: $|n_0| \sim |n_1| \gg |n_2|$
	\begin{itemize}
		\item \textbf{Case 2.1:} $|n_0|^2 |n_0 + n_1| \les |\phi|$
		\item \textbf{Case 2.2:} $|n_0+n_1|\ll|n_3|$
	\end{itemize}
	\item \textbf{Case 3}: $|n_0| \sim |n_1| \sim |n_2| \gg |n_3|$
	\begin{itemize}
		\item \textbf{Case 3.1:} $|(n-n_1)(n-n_2)(n_1+n_2)| \les |\phi|$
		\item \textbf{Case 3.2:} $|(n-n_1)(n-n_2)(n_1+n_2)|\les |n_0|^2 |n_3|$
	\end{itemize}
	\item \textbf{Case 4}: $|n|\sim|n_1| \gg |n_2|$, $|n_1|\gg |n_0| \gg |n_3|$
	\begin{itemize}
		\item \textbf{Case 4.1:} $|n_1|^2 |n - n_1| \les |\phi|$
		\item \textbf{Case 4.2:} $|n_1|^2 |n-n_1| \les |n_0n_2n_3|$
	\end{itemize}
	\item \textbf{Case 5}: $|n_1| \sim |n_2| \gg |n_0| \gg |n_3|$
	\begin{itemize}
		\item \textbf{Case 5.1:} $|n_1|^2 |n_1 + n_2| \les |\phi|$
		\item \textbf{Case 5.2:} $|n_1+n_2| \ll |n_3|$
	\end{itemize}
	\item \textbf{Case 6}: $|n_0| \les |n_3|$
	
\end{itemize}
We see that this covers all the cases:
First, the frequency region $|n_0|\gg |n_3|$ is divided into two subregions $|n_0|\gtrsim |n_1|$ and $|n_0|\ll |n_1|$, which are further divided into \textbf{Cases 1,2,3} and into \textbf{Cases 4,5}, respectively.
Then, Lemma~\ref{lm:resonance} divides each of \textbf{Cases 1,2,4,5} into the two subcases mentioned above, while the division of \textbf{Case 3} is based on the fact that $\phi -3(n-n_1)(n-n_2)(n_1+n_2)=(n_0+n_3+\ldots +n_m)^3-(n_0^3+n_3^3+\ldots +n_m^3)=\mathcal{O}(|n_0|^2|n_3|)$.
We also observe that in \textbf{Case 3} we have
\begin{align*}
\max (|n-n_1|,|n-n_2|,|n_1+n_2|)\ges |n_0|.
\end{align*}

\medskip
\noi \underline{\textbf{Cases 1.1--5.1:}}\\
Let $\s = \tau-n^3$ and $\s_j = \tau_j - n_j^3$, $j=0, \ldots, m$, denote the modulations. Then, we have the following upper bound for the resonance relation
$$|\phi| = | \s - \s_0 - \ldots - \s_m| \les \max(|\s|, |\s_0| , \ldots, |\s_m|) = \s_{\max},$$
which we can use to gain a power of $\phi$.
First, let $\s_{\max} = |\s|$. Using H\"older's inequality and Lemma~\ref{lm:convolution}, we have
\begin{align}
 \| \N_1(u_0, \ldots, u_m) \|_{X^{s, -1}_{p,1}} &\les \bigg\| \sum_{n=n_0 + \ldots+ n_m} \frac{\jb{n}^s |n_0|}{\jb{\phi}^\frac12}  \bigg(\intt_{\tau, \tau=\tau_0 + \ldots + \tau_m} \frac{1}{\jb{\s}\jb{\s_0}^{1-} \cdots \jb{\s_m}^{1-}} \bigg)^\frac12 \nonumber \\
 & \phantom{XXXXXXXXXXXXXXXXXX} \times  \prod_{j=0}^m \| \jb{\s_j}^{\frac12-}\ft{u}_j(n_j)\|_{L^2_\tau}  \bigg\|_{\l^p_n} \nonumber\\
 & \les \bigg\| \sum_{\substack{n=n_0 +\ldots + n_m}} \frac{\jb{n}^s |n_0| }{\jb{\phi}^{\frac12}} \prod_{j=0}^m \| \jb{\s_j}^{\frac12-} \ft{u}_j(n_j) \|_{L^2_\tau} \bigg\|_{\l^p_n}. \label{case11_aux}
\end{align}
For the $X^{s,-\frac12}_{p,2}$-norm, the same approach holds. If $\s_{\max} = |\s_j|$ for $j=0, \ldots, m$, we need to use $\frac12$ power of $\jb{\s_j}$ and proceed by duality in time and estimate the stronger norm $X^{s,-\frac12+}_{p,2}$. It then suffices to estimate \eqref{case11_aux}.
By using H\"older's inequality, we obtain
\begin{align*}
\eqref{case11_aux} 
& \les \sup_n \Big( \I_{\phi}(n)\Big)^{\frac{1}{p'}} \prod_{j=0}^m \|u_j\|_{X^{s, \frac12}_{p,2}} , 
\end{align*}
where
$$\I_{\phi}(n) =  \sum_{n=n_0 + \ldots + n_m} \bigg| \frac{\jb{n}^s |n_0| }{\jb{\phi}^{\frac12} \jb{n_0}^s \jb{n_1}^s \cdots \jb{n_m}^s } \bigg|^{p'},$$
and it suffices to bound $\I_\phi(n)$ uniformly in $n$.
To this end, we must consider the lower bound for $\phi$.
In \textbf{Case 1.1}, we have 
\begin{equation*}
\I_\phi(n) \les \sum_{n_1, \ldots, n_m} \frac{1}{\jb{n_1+\ldots+n_m}^{\frac{p'}{2}} \jb{n_1}^{sp'} \cdots \jb{n_m}^{sp'}} \les 1
\end{equation*}
by applying Lemma~\ref{lm:discrete_convolution} given that $s> 1 - \frac1p - \frac{1}{2m}$.
In \textbf{Case 2.1}, $|n_0| \sim |n_1| \gg |n_2|$, if $|n| \les |n_0+n_1|$, then
\begin{align*}
\I_\phi(n) \les \sum_{n_1, \ldots, n_m} \frac{1}{\jb{n_1}^{(s+\frac12)p'} \jb{n_2}^{sp'} \cdots \jb{n_m}^{sp'}} \les \Big(\sum_{n} \frac{1}{\jb{n}^{(s+\frac{1}{2m})p'}} \Big)^m \les 1, 
\end{align*}
under the following assumption
\begin{equation}
s>\max\Big(\frac12, 1-\frac1p - \frac{1}{2m}\Big).
\label{cond1}
\end{equation}
If $|n|\gg |n_0+n_1|$, then $|n|\sim|n_2+\ldots + n_m| \les |n_2|$ and we have
\begin{align*}
\I_\phi(n) & \les \sum_{n = n_0 + \ldots+n_m} \frac{1}{\jb{ n_0+n_1}^{\frac{p'}{2}} \jb{n_0}^{2sp'}   \jb{n_3}^{sp'} \cdots \jb{n_m}^{sp'}} \\
& \les \bigg( \sum_{n_0, n_1} \frac{1}{\jb{n_0+n_1}^{(s+\frac{1}{2m}) p'} \jb{n_0}^{(s+\frac{1}{2m}) p'}} \bigg) \bigg(\sum_{n} \frac{1}{\jb{n}^{(s + \frac{1}{2m})p'}} \bigg)^{m-2} \les 1,
\end{align*}
if \eqref{cond1} holds.
In \textbf{Case 3.1}, if $|n_0| |n-n_1| |n-n_2| \les |\phi|$, we use Lemma~\ref{lm:discrete_convolution} to obtain
\begin{align*}
\I_\phi(n)  &\les \sum_{n_0, n_{2},  \ldots, n_m} \frac{1}{\jb{n_0 + n_{2}  + \ldots + n_m}^\frac{p'}{2} \jb{n-n_{2}}^\frac{p'}{2} \jb{n_0}^{(2s-\frac12)p'} \jb{n_{3}}^{sp'} \cdots \jb{n_m}^{sp'}   } \\
& \les \sum_{n_0, n_{3}, \ldots, n_m} \frac{1}{\jb{n + n_0 + n_{3} + \ldots + n_m}^{p' - 1} \jb{n_0}^{\be} \jb{n_{3}}^{\be} \cdots \jb{n_m}^{\be}} \les 1,
\end{align*}
for $\be = \frac{1}{m-1} (ms-\frac12)p'$ and the estimate follows from Lemma~\ref{lm:discrete_convolution} if \eqref{cond1} holds. If $|n_0| |n_1+n_2| |n-n_1| \les |\phi|$, then
\begin{align*}
\I_\phi (n) &\les \sum_{n_1, \ldots, n_m} \frac{1}{\jb{n_1+n_2}^{\frac{p'}{2}} \jb{n-n_1}^{\frac{p'}{2}} \jb{n_2}^{(2s-\frac12)p'} \jb{n_3}^{sp'} \cdots \jb{n_m}^{sp'}   } \\
& \les \sum_{n_2, \ldots, n_m} \frac{1}{\jb{n+n_2}^{p'-1} \jb{n_2}^{(2s-\frac12)p'} \jb{n_3}^{sp'} \cdots \jb{n_m}^{sp'}} \les 1
\end{align*}
proceeding as in the previous cases by splitting the power of $\jb{n_2}$ between the other frequencies. By exchanging the roles of $n_1$ and $n_2$, we obtain the estimate when $|n_0| |n_1+n_2| |n-n_2| \les |\phi|$.
In \textbf{Case 4.1}, the estimate follows from that of Case 1.1, by exchanging the roles of $n_0$ and $n_1$. Similarly, the estimate in \textbf{Case 5.1} follows from that of Case 2.1 by exchanging the roles of $(n_0,n_1)$ with $(n_1, n_2)$.

\medskip
In Cases 1.2--5.2 and Case 6, we can no longer use the largest modulation. However, note that it suffices to control the stronger norm $X^{s, -\frac12+}_{p,2}$.

\noi \underline{\textbf{Case 1.2:}}\\
Here, we have $|n|\sim|n_0| \gg |n_1|$ and $|n_0|^2|n - n_0| \les |n_1n_2n_3|$. Thus, we can control the multiplier as follows
\begin{align*}
\jb{n}^s |n_0| \les \jb{n_0}^s \frac{|n_1n_2n_3|^\frac12}{|n_1 + \ldots + n_m|^\frac12}.
\end{align*}
Using Lemma~\ref{lm:bilinear_new}, we have
\begin{align*}
\| \mathcal{N}_1(u_0, \ldots, u_m) \|_{X^{s, -\frac12+}_{p,2}} 
& \les \big\| (D^s u_0) \cdot D^{-\frac12}\big( \PP (D^\frac12 u_1 \cdot D^\frac12 u_{2}) \cdot D^\frac12 u_{3} \cdot u_{4} \cdots u_m \big) \big\|_{X^{0,-\frac12+}_{p,2}} \\
& \les \|u_0\|_{X^{s, \frac12}_{p,2}} \big\| \PP (D^\frac12 u_1 \cdot D^\frac12 u_{2}) \cdot D^\frac12 u_{3} \cdot  u_{4} \cdots u_m \big\|_{X^{-\frac12, 0}_{p',2}},
\intertext{and then using H\"older's inequality in $n$ (and also Young's inequality if $m \geq 4$),}
& \les \|u_0\|_{X^{s, \frac12}_{p,2}}  \big\| \PP (D^{\frac12} u_1 \cdot D^{\frac12} u_{2}) \cdot D^{\frac12} u_{3} \cdot u_{4} \cdots u_m \big\|_{X^{0, 0}_{q,2}} \\
& \les \|u_0\|_{X^{s, \frac12}_{p,2}} \big\| \PP(D^{\frac12} u_1 \cdot D^{\frac12} u_{2}) \cdot D^{\frac12} u_{3} \big\|_{X^{0,0}_{q,2}} \prod_{j=4}^m \|u_j\|_{X^{0,0}_{1,1}} ,
\end{align*}
where $q=p$ for $2< p <4$ and $q=\frac{2p}{p-2}-$ for $4\leq p<\infty$. Applying \eqref{trilinear0} or \eqref{l6-p}, we obtain
\begin{multline*}
\big\| D^\frac12 u_{2} \cdot \PP_0 (D^\frac12 u_1 \cdot D^\frac12 u_{3}) \big\|_{X^{0,0}_{q,2}}  + \big\| D^\frac12 u_1 \cdot \PP_0 (D^\frac12 u_{2} \cdot D^\frac12 u_{3}) \big\|_{X^{0,0}_{q,2}}\\
+ \big\| D^\frac12 u_1 \cdot D^\frac12 u_{2} \cdot D^\frac12 u_{3} \1_{\phi' \neq 0} \big\|_{X^{0,0}_{q,2}} \\
\les \|u_1\|_{X^{\frac12, \frac12}_{q,2} } \|u_{2}\|_{X^{\frac12, \frac12}_{q,2}} \|u_{3}\|_{X^{\frac12, \frac12}_{q',2}} + \|u_1\|_{X^{\frac12+, \frac12}_{q,2} } \|u_{2}\|_{X^{\frac12+, \frac12}_{2,2}} \|u_{3}\|_{X^{\frac12+, \frac12}_{2,2}},
\end{multline*}
where $\phi' =(n_1 + n_{2}) (n_1 + n_{3}) (n_{2} + n_{3})$.
Using the fact that $|n_1| \geq \ldots \geq |n_m|$, the estimate follows from H\"older's inequality if $2< p<4$ and $s >  1 - \frac1p - \frac{p-2}{2pm}$, or $4\leq p<\infty$  and $s> 1 - \frac1p - \frac{1}{pm}$.

\medskip 

\noi\underline{\textbf{Case 2.2:}}\\
In this case we have $|n_0|\sim|n_1|\gg|n_2|$ and $|n_0+n_1| \ll |n_3|$. Thus, $|n| \les |n_2|$, $|n_0 + n_1 + n_3 + \ldots + n_m| \les |n_3|$ and we can estimate the multiplier as 
\begin{align*}
\jb{n}^s |n_0| \les \jb{n_{2}}^s \jb{n_0}^\frac12 \jb{n_1}^\frac12 \les \frac{\jb{n_{2}}^s \jb{n_0}^\frac12 \jb{n_1}^\frac12 \jb{n_3}^\frac12}{\jb{n_0 + n_1 + n_{3} + \ldots + n_m}^\frac12}.
\end{align*}
Using \eqref{bilinear_new} since $n n_{2}(n-n_{2} ) \neq 0$ and Young's inequality, we have
\begin{align*}
\|\mathcal{N}_1 (u_0, \ldots, u_m) \|_{X^{s, -\frac12+}_{p,2}} 
& \les \big\| (D^{s} u_{2}) D^{-\frac12} \big( D^{\frac12} u_0 \cdot D^\frac12 u_1 \cdot D^\frac12 u_{3} \cdot u_{4} \cdots u_m \big) \big\|_{X^{0,-\frac12+}_{p,2}} \\
& \les \|u_{2}\|_{X^{s, \frac12}_{p,2}} \big\| D^{\frac12} u_0 \cdot D^\frac12 u_1 \cdot D^\frac12 u_{3} \cdot u_{4} \cdots u_m \big\|_{X^{-\frac12,0}_{p',2}}.
\end{align*}
The intended estimate follows from the argument in Case 1.2 exchanging the roles of $(u_0, u_1, u_2,u_3)$ by $(u_2, u_0, u_1, u_3)$. Note that $(n_0+n_1)(n_0 + n_{3} ) (n_1 + n_{3}) \neq 0$, with the last factor nonzero because $|n_1| \gg |n_{3}|$.

\medskip 

\noi\underline{\textbf{Case 3.2:}}\\
Since $|n_0| \sim |n_1| \sim |n_{2}| \gg |n_{3}|$ and
\begin{equation*}
|(n-n_1)(n-n_{2})(n_1+n_{2})| \les |n_0|^2 |n_{3}|, \label{aux224}
\end{equation*}
for $N_{\min} = \min(|n - n_1|, |n-n_{2}|, |n_1 + n_{2}|)$, we get
$$N_{\min}^2|n_0| \les |n_0|^2 |n_{3}| \implies N_{\min} \les |n_0n_{3}|^\frac12.$$
If $N_{\min} = |n_1+ n_{2}|$, then $|n_1 + \ldots + n_m| \les |n_1 + n_{2}| + |n_{3}| \les |n_0 n_{3}|^\frac12$ and we can estimate the multiplier as follows
\begin{align*}
\jb{n}^s |n_0| & \les \jb{n_0}^s |n_1 n_{2}|^\frac12 \frac{|n_0 n_{3}|^{\frac{\dl}{2}}}{|n_1 + \ldots + n_m|^{\dl}}\sim \jb{n_0}^s \frac{|n_1|^{\frac12+\frac{\dl}{4}}|n_2|^{\frac12+\frac{\dl}{4}}|n_3|^{\frac{\dl}{2}}}{|n_1 + \ldots + n_m|^{\dl}},
\end{align*}
where $\dl = \frac32 - \frac3p +$ for $2< p< 4$ and $\dl = 1 - \frac1p +$ for $4\leq p<\infty$.
Using \eqref{bilinear_new} and Young's inequality, we obtain the following
\begin{align*}
\|\mathcal{N}_1 (u_0, \ldots, u_m) \|_{X^{s, -\frac12+}_{p,2}} 
& \les \bigg\| D^s u_0 \cdot D^{-\dl} \big( D^{\frac12+ \frac{\dl}{4}} u_1 \cdot D^{\frac12+ \frac{\dl}{4}} u_{2} \cdot D^{\frac{\dl}{2}} u_{3} \cdot u_{4} \cdots u_m \big) \bigg\|_{X^{0,-\frac12+}_{p,2}} \\
& \les \|u_0\|_{X^{s, \frac12}_{p,2}} \big\| D^{\frac12+ \frac{\dl}{4}} u_1 \cdot D^{\frac12+ \frac{\dl}{4}} u_{2} \cdot D^{\frac{\dl}{2}} u_{3} \cdot u_{4} \cdots u_m \big\|_{X^{-\dl, 0}_{p',2}}\\
& \les \|u_0\|_{X^{s, \frac12}_{p,2}} \big\| D^{\frac12+ \frac{\dl}{4}} u_1 \cdot D^{\frac12+ \frac{\dl}{4}} u_{2} \cdot D^{\frac{\dl}{2}} u_{3} \big\|_{X^{0,0}_{q,2}} \prod_{j=4}^m \|u_j\|_{X^{0,0}_{1,1}},
\end{align*}
where $q=\frac{2p}{4-p}$ for $2<p<4$ and $q=\infty$ for $4\leq p<\infty$, which satisfies $\frac{1}{p'}-\frac1q <\dl$ and $2<q\leq \infty$.
Since $n_1 + n_{2} \neq0$ and $|n_1|\sim|n_{2}| \gg|n_{3}|$, then $(n_1+n_{2}) (n_1 + n_{3}) (n_{2} + n_{3}) \neq 0$ and we can apply \eqref{l6-p} to obtain
\begin{align*}
&\big\| D^{\frac12+ \frac{\dl}{4}} u_1 \cdot D^{\frac12+ \frac{\dl}{4}} u_{2} \cdot D^{\frac{\dl}{2}} u_{3} \big\|_{X^{0,0}_{q,2}} \\
& \les \min \big(\|u_1\|_{X^{\frac12+\frac{\dl}{4}+, \frac12}_{q,2}} \|u_{2}\|_{X^{\frac12+ \frac{\dl}{4}+, \frac12}_{2,2}} ,\|u_1\|_{X^{\frac12+\frac{\dl}{4}+, \frac12}_{2,2}} \|u_{2}\|_{X^{\frac12+ \frac{\dl}{4}+, \frac12}_{q,2}} \big)  \|u_{3} \|_{X^{\frac{\dl}{2}+, \frac12}_{2,2}} \\
& \les \|u_1\|_{X^{\frac12+\frac{\dl}{4}+, \frac12}_{r,2}} \|u_{2}\|_{X^{\frac12+\frac{\dl}{4}+, \frac12}_{r,2}}  \|u_{3} \|_{X^{\frac{\dl}{2}+, \frac12}_{2,2}} ,
\end{align*} 
using multilinear interpolation for the last inequality, where $r=p$ for $2<p<4$ and $r=4$ for $4\leq p<\infty$.
The estimate follows if $2< p < 4$ and
\begin{align*}
s& >\max\Big( 1 - \frac1p - \frac{p-2}{8p}, 1 - \frac1p - \frac{1}{3p}, \Big( 1 - \frac1p - \frac{1}{mp}\Big) \1_{m\geq 4 } \Big) \\
&= 1 - \frac1p - \min\Big( \frac{p-2}{8p}, \frac{1}{mp}\Big),
\end{align*}
or $4\leq p<\infty$ and
\begin{align*}
s& >\max\Big(1 - \frac1p - \frac{1}{4p}, 1 - \frac1p - \frac{1}{3p}, \Big( 1 - \frac1p - \frac{1}{mp}\Big) \1_{m\geq 4 } \Big) \\
&= 1 - \frac1p - \min\Big( \frac{1}{4p}, \frac{1}{mp}\Big).
\end{align*}
If $N_{\min} = |n-n_1| = | n_0 + n_{2} + \ldots+ n_m|$, then
\begin{align*}
\jb{n}^s |n_0| & \les \jb{n_1}^s |n_0 n_{2}|^\frac12 \frac{|n_0 n_{3}|^{\frac{\dl}{2}}}{|n_0 + n_2 + \ldots + n_m|^{\dl}}\sim \jb{n_1}^s \frac{|n_0|^{\frac12+\frac{\dl}{4}}|n_2|^{\frac12+\frac{\dl}{4}}|n_3|^{\frac{\dl}{2}}}{|n_0 + n_2 + \ldots + n_m|^{\dl}},
\end{align*}
and the estimate follows from the previous argument, exchanging the roles of $u_0$ and $u_1$. Similarly, if $N_{\min} = |n - n_{2}| = |n_0 + n_1 + n_{3} + \ldots + n_m|$, we can control the multiplier as follows
\begin{align*}
\jb{n}^s |n_0| & \les \jb{n_2}^s |n_0 n_1|^\frac12 \frac{|n_0 n_{3}|^{\frac{\dl}{2}}}{|n_0 + n_1 + n_3 + \ldots + n_m|^{\dl}}\sim \jb{n_2}^s \frac{|n_0|^{\frac12+\frac{\dl}{4}}|n_1|^{\frac12+\frac{\dl}{4}}|n_3|^{\frac{\dl}{2}}}{|n_0 + n_1 + n_3 + \ldots + n_m|^{\dl}},
\end{align*}
and the estimate follows from the same arguments.

\medskip 

\noi\underline{\textbf{Case 4.2:}}\\
Since $|n|\sim|n_1| \gg |n_2|$, $|n_1|\gg |n_0| \gg |n_3|$ and $|n_1|^2 |n-n_1| \les |n_0n_2n_3|$, we estimate the multiplier as follows
$$\jb{n}^s |n_0| \les \jb{n_1}^s |n_0n_{2}|^\frac12 \frac{|n_{3}|^\frac12}{|n_0 + n_{2} + \ldots + n_m|^\frac12}.$$
The estimate follows from the strategy in Case 1.2, exchanging the roles of $u_0$ and $u_1$, considering two possibilities $(n_0+n_2)(n_0+n_3)(n_2+n_3) \neq 0$ and $n_2+n_3=0$, $|n_0|\gg |n_2|$.

\medskip 

\noi\underline{\textbf{Case 5.2:}}\\
In this case we have $|n_1|\sim|n_2| \gg |n_0| \gg |n_3|$ and $|n_1+n_2| \ll |n_3|$. Then, $|n| \sim |n_0|$, $|n_1 + \ldots + n_m| \les |n_3|$ and we estimate the multiplier as follows
\begin{align*}
\jb{n}^s |n_0| \les \jb{n_0}^s \frac{|n_1n_2n_3|^\frac12}{|n_1 + \ldots +n_m|^\frac12}.
\end{align*}
The estimate follows from the approach in Case 1.2, since $(n_1+n_2)(n_1+n_3)(n_2+n_3)\neq 0$.

\medskip 

\noi\underline{\textbf{Case 6:}}\\
Let $|n_0| \les |n_{3}|$. Let us first consider the case when $n_{2} + n_{3} = 0$. Using Young's and H\"older's inequalities, we obtain the following
\begin{align*}
\|\mathcal{N}_1 (u_0. \ldots, u_m) \|_{X^{s, -\frac12+}_{p,2}} 
& \les \bigg\| D^{s+ \frac1p - \frac12-}u_0 \cdot D^s u_1 \cdot \PP_0( D^{\frac32-\frac1p-s+} u_{2} \cdot u_{3} ) \cdot u_{4} \cdots u_m \bigg\|_{X^{0,-\frac12+}_{p,2}} \\
& \les \big\| D^{s+ \frac1p - \frac12-}u_0  \cdot D^s u_1 \big\|_{X^{0,0}_{p,2}} \|u_{2}\|_{X^{s, 0}_{p,1}} \|u_{3}\|_{X^{\frac52-\frac3p - 2s+, 0}_{p,1}} \prod_{j=4}^m \|u_j\|_{X^{0,0}_{1,1}}.
\end{align*}
Using \eqref{new_l4} and H\"older's inequality, we have
\begin{align*}
\big\| D^{s+ \frac1p - \frac12-}u_0  \cdot D^s u_1 \big\|_{X^{0,0}_{p,2}} & \les \|u_0\|_{X^{s+ \frac1p - \frac12-, \frac12}_{2,2}} \|u_1\|_{X^{s, \frac12}_{p,2}} 
\les \|u_0\|_{X^{s, \frac12}_{p,2}} \|u_1\|_{X^{s, \frac12}_{p,2}}.
\end{align*}
Then, the estimate follows from $|n_{3}| \geq \ldots \geq |n_m|$ given that $s>1 - \frac1p - \frac{1}{2m}$.
If $n_{2} + n_{3} \neq 0$, note that $|n_0 + n_{2} + \ldots + n_m| \les |n_{2}|$, so we can estimate the multiplier as follows
$$\jb{n}^s |n_0| \les \jb{n_1}^s |n_0n_{3}|^\frac12 \les \jb{n_1}^s \frac{|n_0n_{2}n_{3}|^\frac12}{|n_0 + n_{2} + \ldots + n_m|^\frac12}.$$
Then, we can proceed as in Case 1.2, exchanging the roles of $(u_0,u_1,u_2,u_3)$ by $(u_1,u_2,u_3,u_0)$, and using the fact that $(n_0+n_{2})(n_0+n_{3}) (n_{2} + n_{3}) \neq 0$.

This completes the estimate of $\mathcal{N}_1$.

\medskip

Lastly, we want to estimate $\mathcal{N}_\al$ for odd $3\leq \al \leq m$. Note that 
$$\mathcal{N}_\al (u_0,\ldots, u_m) = \PP_0 (u_1 u_2) \cdots \PP_0(u_{\al-2} u_{\al-1}) {\mathcal{N}}'_\al(u_0, u_\al, \ldots, u_m),$$
where
$$\Ft_x \big({\mathcal{N}}'_\al(u_0, \ldots, u_m) \big)(t,n) = \sum_{\substack{n=n_0+n_\al + \ldots+ n_m\\ nn_0\cdots n_m\neq 0\\n_{\al} + n_{\al+1} \neq 0}} in_0 \ft{u}_0(n_0) \ft{u}_\al (n_\al) \cdots \ft{u}_m (n_m).$$
For $\al=m$ or $\al = m-1$, the resonance relation satisfies $|\phi| \sim |nn_0n_m|$ and $|\phi| \sim |(n-n_0)(n-n_{m-1})(n-n_m)|$, respectively, thus we can proceed as in Cases 1.1--5.1, following the same strategy in time and using Cauchy-Schwarz inequality in space on the terms $\PP_0(u_1u_2), \ldots, \PP_0(u_{\al-2}u_{\al-1})$. For $3\leq \al \leq m-2$, an analogous case separation holds by replacing $(n_1,n_2,n_3)$ by $(n_\al, n_{\al+1}, n_{\al+2})$. In Cases 1.1--5.1 we follow the strategy mentioned above. To illustrate the strategy in the remaining cases, consider Case 1.2. Following the strategy for $\mathcal{N}_1$, we have
\begin{align*}
&\| \mathcal{N}_\al(u_0, \ldots, u_m) \|_{X^{s, -\frac12+}_{p,2}} \\
& \les \bigg\| (D^s u_0) \cdot D^{-\frac12}\bigg(   \Big(\prod_{\substack{j=1\\\text{odd}}}^{\al-2}\PP_0(u_j u_{j+1}) \Big) \cdot \PP (D^\frac12 u_\al \cdot D^\frac12 u_{\al+1}) \cdot D^\frac12 u_{\al+2} \cdot \prod_{i=\al+3}^m u_{i} \bigg) \bigg\|_{X^{0,-\frac12+}_{p,2}} \\
& \les \|u_0\|_{X^{s, \frac12}_{p,2}} \bigg\| \Big(\prod_{\substack{j=1\\\text{odd}}}^{\al-2}\PP_0(u_j u_{j+1}) \Big) \cdot \PP (D^\frac12 u_\al \cdot D^\frac12 u_{\al+1}) \cdot D^\frac12 u_{\al+2} \cdot \prod_{i=\al+3}^m u_{i} \bigg\|_{X^{-\frac12, 0}_{p',2}} \\
& \les  \|u_0\|_{X^{s, \frac12}_{p,2}} \Big( \prod_{j=1}^{\al-1} \| u_j\|_{X^{\frac12-\frac1p+, 0}_{p,1}} \Big) \bigg\| \PP (D^\frac12 u_\al \cdot D^\frac12 u_{\al+1}) \cdot D^\frac12 u_{\al+2} \cdot \prod_{i=\al+3}^m u_{i} \bigg\|_{X^{-\frac12, 0}_{p',2}},
\end{align*}
using Young's and Cauchy-Schwarz inequalities in the last step. The last term can be estimated following the same approach as for $\mathcal{N}_1$.

\section{Almost sure global well-posedness and invariance of the Gibbs measure}\label{sec:inv}

In this section, we extend the solutions of Theorem~\ref{th:lwp} globally-in-time and show invariance of the Gibbs measure under the dynamics of gKdV \eqref{gkdv}, for mean zero initial data. We closely follow the argument in \cite{NahOhBelletSta12}.

Recall that $(\Omega, \mathcal{F}, \P)$ is a probability space and $\{g_n\}_{n\in \Z_*}$, $\Z_*= \Z \setminus \{0\}$, a sequence of complex-valued standard Gaussian random variables with $g_{-n} = \conj{g_n}$. 
 We can define the Gaussian measure $\rho$ as the induced probability measure under the map
  \begin{align}
 \omega \mapsto u^\omega(x) = \sum_{n \in \Z_*} \frac{g_n(\omega)}{|n|} e^{inx} \in \bigcap_{s<1-\frac1p} \FL^{s,p}(\T) \ \text{a.s.}, \label{u}
 \end{align}
 or equivalently, as $\rho = \P \circ u^{-1}$ the push-forward of the map in \eqref{u}, with the following density
 \begin{equation*}
d\rho = Z^{-1} e^{-\frac12 \int_\T(\partial_x u)^2 } du. \label{rho}
\end{equation*}
Further details on the construction of Gaussian measures in Banach spaces can be found in \cite{gross, kuo75}, for example. Before discussing the construction of the Gibbs measure $\mu$, we recall the following tail estimate for $\rho$. 
\begin{lemma}\label{lm:tail}
Let $(s,p)$ satisfy $(s-1)p<-1$ and $K>0$. Then, the following estimate holds
\begin{align*}
	\rho\big( \| u\|_{\FL^{s,p}} >K\big) \leq C e^{-cK^2},
\end{align*}		
for some constants $C,c>0$ depending only on $s$ and $p$.
\end{lemma}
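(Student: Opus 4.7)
The plan is to establish the Gaussian tail bound by first controlling all $L^r(\Omega)$ moments of $\|u^\omega\|_{\FL^{s,p}}$ with a Gaussian growth rate in $r$, and then optimizing in $r$ through Chebyshev's inequality. This is a standard argument; one could alternatively invoke Fernique's theorem once it is verified that $\rho$ is a Gaussian measure on $\FL^{s,p}(\T)$, but the direct route is more transparent here.

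First I would compute explicitly that, for $u^\omega$ as in \eqref{u},
\[
\|u^\omega\|_{\FL^{s,p}}^p \;=\; \sum_{n\in \Z_*} \jb{n}^{sp}\,\frac{|g_n(\omega)|^p}{|n|^p} \;\les\; \sum_{n\in \Z_*} \frac{|g_n(\omega)|^p}{|n|^{(1-s)p}},
\]
using $\jb{n}\sim |n|$ for $n\in\Z_*$. Next, for any $r\geq p$, I would apply Minkowski's integral inequality in $L^{r/p}(\Omega)$ together with the standard Gaussian moment bound $\|g_n\|_{L^r(\Omega)} \les \sqrt{r}$ to estimate
\[
\bigl(\E\bigl[\|u^\omega\|_{\FL^{s,p}}^r\bigr]\bigr)^{p/r} \;\les\; \sum_{n\in\Z_*} \frac{\|g_n\|_{L^r(\Omega)}^p}{|n|^{(1-s)p}} \;\les\; r^{p/2} \sum_{n\in\Z_*} \frac{1}{|n|^{(1-s)p}}.
\]
The hypothesis $(s-1)p<-1$ is exactly what is needed to make the final sum converge, yielding the moment estimate $\bigl(\E[\|u^\omega\|_{\FL^{s,p}}^r]\bigr)^{1/r}\leq C\sqrt{r}$ for all $r\geq p$, with $C=C(s,p)$.

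Finally I would apply Chebyshev's inequality: for each $K>0$ and each $r\geq p$,
\[
\rho\bigl(\|u\|_{\FL^{s,p}}>K\bigr) \;\leq\; K^{-r}\, \E\bigl[\|u^\omega\|_{\FL^{s,p}}^r\bigr] \;\leq\; \Bigl(\frac{C\sqrt{r}}{K}\Bigr)^{\!r}.
\]
Choosing $r = K^2/(eC^2)$ (which is $\geq p$ once $K$ is large enough, and for small $K$ the estimate is trivial by adjusting constants) makes the right-hand side equal to $e^{-r/2}=e^{-K^2/(2eC^2)}$, which gives the desired bound. There is no real obstacle here; the only subtlety is to keep track of the restriction $r\geq p$ and to verify that the hypothesis $(s-1)p<-1$ produces a summable series in the Minkowski step.
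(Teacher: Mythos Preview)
Your argument is correct. The paper does not give a detailed proof of this lemma; it simply records that $\FL^{s,p}(\T)$ is an abstract Wiener space for $(s-1)p<-1$ (citing \cite{BO2011, NahOhBelletSta12}) and then invokes Fernique's theorem \cite{fernique}. Your route via Minkowski's inequality, the Gaussian moment bound $\|g_n\|_{L^r(\Omega)}\lesssim \sqrt{r}$, and Chebyshev with optimization in $r$ is the standard hands-on alternative, and you correctly isolate the role of the hypothesis $(s-1)p<-1$ as the summability condition for $\sum_{n\in\Z_*}|n|^{-(1-s)p}$. The Fernique approach is more abstract and applies once the Banach-space Gaussian structure is verified, while your argument is self-contained and yields explicit constants; for this lemma the two are equally short and either is acceptable.
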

This lemma follows from the fact that $\FL^{s,p}(\T)$ is an abstract Wiener space for $(s-1)p<-1$ (see \cite{BO2011, NahOhBelletSta12}) and from Fernique's theorem \cite{fernique}. 
Now, we view the Gibbs measure $\mu$ in \eqref{measure} as a weighted Gaussian measure
\begin{equation*}
d\mu =  Z^{-1} e^{\mp\frac{1}{k+1} \int_\T u^{k+1} dx} d \rho(u). \label{aux_gibbs}
\end{equation*}
In the defocusing case ($+$ in \eqref{gkdv}) and for odd $k\geq 1$, the measure $\mu$ is well-defined as a measure in $\FL^{s,p}(\T)$ for $1\leq p \leq \infty$ and $1 - \frac1p - \frac{1}{k+1} < s< 1 - \frac1p$, and it is absolutely continuous with respect to $\rho$. This follows easily from Sobolev inequality. For the non-defocusing case, Lebowitz-Rose-Speer \cite{LRS} and Bourgain \cite{BO94} proposed the introduction of a mass cutoff and instead studied the following Gibbs measure
\begin{equation*}
d\mu = Z^{-1} \1_{\{\|u\|_{L^2} \leq R\}} e^{\mp \frac{1}{k+1} \int_\T u^{k+1} dx } d\rho(u).
\end{equation*}
This new measure is known to be normalizable as stated in the following theorem.

	\begin{theorem}[\cite{LRS, BO94, OhSosoeTolomeo}] \label{th:focusing}
	Let $k\geq 2$, $R>0$, and define $F(u)$ by
	\begin{align}
		F(u) = e^{\mp \frac{1}{k+1} \int_\T u^{k+1}dx} \1_{\{\|u\|_{L^2} \leq R\}},\label{a_F}
	\end{align}
	where `$\mp$' above corresponds to `$\pm$' in the equation \eqref{gkdv}. Then, for $1 \leq q <\infty$, we have that $F(u) \in L^q(d\rho)$ if one of the following (a), (b) holds:
	
	\noi{\rm (a)} $2\leq k \leq 4$ with `$+$' sign in \eqref{a_F} when $k=3$, and any finite $R>0$;
		
	\noi{\rm (b)} $k=5$ with `$+$' sign in \eqref{a_F}, and $0<R <\|Q\|_{L^2(\R)}$, 
	
	\noi where $Q$ is the (unique) optimizer for the Gagliardo-Nirenberg-Sobolev inequality on $\R$:
	\begin{equation}
		\|u\|^6_{L^6(\R)} \leq C \|u\|^4_{L^2(\R)} \| \dx u \|^2_{L^2(\R)},
		\label{gagliardo-nirenberg}
	\end{equation}
	
	\noi with $\|Q\|_{L^6(\R)}^6 = 3\| \dx Q\|_{L^2(\R)}^2$. Moreover, if 
	
	\noi{\rm (c)} $k=5$ with `$+$' sign in \eqref{a_F}, and $R = \|Q\|_{L^2(\R)}$,
	
	\noi then we have $F(u)\in L^1(d\rho)$.
\end{theorem}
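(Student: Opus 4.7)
The plan is to introduce the truncation $u_N=\PP_{\leq N}u$ and work with the truncated density
\[ F_N(u)=e^{\mp\frac{1}{k+1}\int_\T u_N^{k+1}\,dx}\,\1_{\{\|u\|_{L^2}\leq R\}}. \]
Since $u\in L^{k+1}(\T)$ almost surely under $\rho$ (using Sobolev embedding $H^{1/2-}\hookrightarrow L^{k+1}$ together with the a.s. regularity of $u$), we have $u_N\to u$ in $L^{k+1}$ almost surely, hence $F_N\to F$ pointwise a.s. Fatou's lemma then reduces the assertion $F\in L^q(\rho)$ to the uniform bound $\sup_N\|F_N\|_{L^q(\rho)}<\infty$.

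The main deterministic input is the Gagliardo-Nirenberg inequality on $\T$ for mean-zero functions,
\[ \|u\|_{L^{k+1}(\T)}^{k+1}\leq C_k\|u\|_{L^2(\T)}^{(k+3)/2}\|\partial_x u\|_{L^2(\T)}^{(k-1)/2}, \]
with sharp constant $C_5=3/\|Q\|_{L^2(\R)}^4$ for $k=5$. On the cutoff event $\{\|u\|_{L^2}\leq R\}$ (which implies $\|u_N\|_{L^2}\leq R$ by Plancherel), this provides pointwise control of $\frac{q}{k+1}\bigl|\int u_N^{k+1}\,dx\bigr|$ in terms of $\|\partial_x u_N\|_{L^2}$. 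The dichotomy between $(k-1)/2<2$ (subcritical, $k\leq 4$) and $(k-1)/2=2$ (critical, $k=5$) drives the three cases.

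For case (a), $2\leq k\leq 4$, since $(k-1)/2<2$ the plan is to derive a super-exponential tail bound $\rho\bigl(\bigl|\int u_N^{k+1}\,dx\bigr|>M,\,\|u\|_{L^2}\leq R\bigr)\leq Ce^{-cM^{\alpha}}$ for some $\alpha>1$, uniformly in $N$, which then integrates to a uniform bound on $\|F_N\|_{L^q(\rho)}$ via the layer-cake formula. The tail bound is obtained by combining the deterministic Gagliardo-Nirenberg estimate (which forces $\|\partial_x u_N\|_{L^2}\ges M^{2/(k-1)}$ on the event), chi-squared tail bounds on $\|\partial_x u_N\|_{L^2}^2=2\sum_{0<n\leq N}|g_n|^2$, Wiener-chaos hypercontractivity for $V_N=\int u_N^{k+1}\,dx$, and an optimal choice of truncation level $N$ depending on $M$. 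This is essentially the Lebowitz-Rose-Speer argument, as adapted by Bourgain for mKdV.

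For case (b), $k=5$ with $R<\|Q\|_{L^2(\R)}$, the sharp Gagliardo-Nirenberg inequality yields
\[ \tfrac{1}{6}\int u_N^6\,dx\leq \frac{R^4}{2\|Q\|_{L^2(\R)}^4}\|\partial_x u_N\|_{L^2}^2 \]
on the cutoff event, with coefficient strictly less than $\tfrac12$; combined with the $e^{-\frac{1}{2}\|\partial_x u\|_{L^2}^2}$ factor in the Gaussian density, this produces a strictly sub-critical integral that is controlled uniformly in $N$. The main obstacle is case (c), at the threshold $R=\|Q\|_{L^2(\R)}$, where the Gagliardo-Nirenberg coefficient exactly matches the Gaussian weight and the argument for (b) degenerates. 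Following Oh-Sosoe-Tolomeo, one would resolve this with a refined variational/concentration-compactness analysis around the optimizer $Q$, recovering only the weaker $L^1$ bound at the threshold.
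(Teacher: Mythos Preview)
The paper does not prove Theorem~\ref{th:focusing}; it is quoted from \cite{LRS, BO94, OhSosoeTolomeo}, and Remark~\ref{rem:renormalize_measure}(i) traces its history: \cite{LRS} first claimed the result, \cite{BO94} gave a rigorous proof of (a) and of (b) for \emph{sufficiently small} $R$, and \cite{OhSosoeTolomeo} established (b) in the full range $R<\|Q\|_{L^2(\R)}$ together with the threshold case (c). Your outline for (a) is indeed the LRS/Bourgain large-deviation scheme and is correct in spirit (note that the ``optimal choice of truncation level $N$ depending on $M$'' should be an auxiliary cutoff $N_0\le N$, since $N$ is fixed when bounding $\|F_N\|_{L^q(d\rho)}$).

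Your argument for (b), however, has a genuine gap. The step ``combined with the $e^{-\frac12\|\partial_x u\|_{L^2}^2}$ factor in the Gaussian density, this produces a strictly sub-critical integral that is controlled uniformly in $N$'' does not go through: that density expression is purely formal, and what you actually need is
\[
\sup_N \int e^{qc\,\|\partial_x u_N\|_{L^2}^2}\,d\rho(u)<\infty,\qquad c=\frac{R^4}{2\|Q\|_{L^2(\R)}^4}<\tfrac12.
\]
But under $\rho$ one has $\|\partial_x u_N\|_{L^2}^2=\sum_{0<|n|\le N}|g_n|^2$, so this expectation factors as a product of $N$ identical terms each strictly larger than $1$ and therefore diverges as $N\to\infty$ for \emph{any} $c>0$. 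The pointwise Gagliardo--Nirenberg bound alone, without further exploiting the $L^2$ cutoff through a frequency decomposition and large-deviation/tail analysis (as you correctly describe for case (a)), cannot yield the uniform bound. This is exactly why Bourgain's argument in \cite{BO94} covered only small $R$; the full subcritical range and the threshold require the variational analysis of \cite{OhSosoeTolomeo}. A secondary point: you invoke the \emph{sharp} Gagliardo--Nirenberg constant on $\T$ for mean-zero functions as if it coincides with the real-line constant $C_5=3/\|Q\|_{L^2(\R)}^4$; this is correct but nontrivial and itself needs justification.
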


\begin{remark}\rm\label{rem:renormalize_measure}
	\noi (i) Theorem~\ref{th:focusing} was first claimed in \cite{LRS}. Unfortunately, there was a gap in the argument for (b) as stated in \cite{CFL}. In \cite{BO94}, Bourgain presented a more analytic proof of Theorem~\ref{th:focusing} (a) for any finite $R$ and (b) for small enough $R$. The result for (c) was recently proved by Oh-Sosoe-Tolomeo in \cite{OhSosoeTolomeo}. Since in this case we do not have the $L^q$-integrability of the density $F(u)$ for $q>1$, we cannot directly apply Bourgain's invariant measure argument to prove Theorem~\ref{th:invariance}. However, using a limiting argument, we can extend almost sure global well-posedness and the invariance of the Gibbs measure to this case. We will give a proof of Theorem~\ref{th:invariance} for the cases (a), (b) in this section, and describe how to treat the threshold case (c) in Appendix~\ref{ap:threshold}.
	
	\noi (ii) The gKdV equations \eqref{gkdv} and the nonlinear Schr\"odinger equation (NLS) share a Hamiltonian, and consequently they have the same associated Gibbs measure. An NLS analogue of Theorem~\ref{th:focusing} (i.e., in the complex-valued setting and for general fractional power) was also shown in \cite{OhSosoeTolomeo}. In fact, the critical threshold $R=\| Q\|_{L^2(\R)}$ for $k=5$ is related to the existence of finite time blow-up solutions with minimal mass $\| Q\|_{L^2(\R)}$ for NLS on $\T$ due to Ogawa-Tsutsumi \cite{OgTs}. Although the quintic focusing gKdV equation \eqref{gkdv} on the real line also exhibits finite time blow-up solutions with mass arbitrarily close to $\| Q\|_{L^2(\R)}$ \cite{Merle01,MartelMerle02}, it is known \cite{MartelMerle02b} that there does not exist a blow-up solution with minimal mass (satisfying a certain condition), and moreover there are no analogous results on $\T$.

	\noi (iii) Note that the assumptions in Theorem~\ref{th:invariance} (b) follow from those in Theorem~\ref{th:focusing} needed to rigorously construct the Gibbs measure $\mu$ in the non-defocusing case. In fact, the measure is not normalizable in the non-defocusing case when $k>5$ or when $k=5$ and $R> \|Q\|_{L^2(\R)}$~\cite{LRS, OhSosoeTolomeo}. 
	
	\noi (iv) In the cases (a), (b) of Theorem~\ref{th:focusing}, it follows from its proof that, for $F_{N}(u) := F(\PP_{\leq N} u)$ and any $1\leq q <\infty$, the estimate
	\begin{equation}
	\|F_N\|_{L^q(d\rho)} \leq C < \infty \label{FNR}
	\end{equation}
holds uniformly in $N$.
\end{remark}

In the rest of this section, we focus on the cases (a), (b) of Theorem~\ref{th:focusing}. For simplicity, we choose to take the `$-$' sign in the definition of $\mu$, as it will not play a role in the results. Lastly, we state the following known result on the convergence of the truncated measures $\mu_N$ defined by
\begin{align*}
d \mu_N(u) = Z_N^{-1} F_{N}(u) d\rho(u).
\end{align*}

	\begin{lemma}\label{lm:F}
	For all $1\leq q < \infty$, we have
	\begin{align*}
	F_{N}(u) \to F(u) \quad \text{in } L^q(d\rho) \text{ as } N\to\infty.
	\end{align*}
	Moreover, for all $\eps>0$, there exists $N_0\in \NB$ such that for $N\geq N_0$ and any measurable set $A\subset \FL^{s,p}(\T)$, for $1\leq p<\infty$ and $s\in(1 - \frac1p - \frac{1}{k+1}, 1 - \frac1p)$, the following holds
	$$|\mu_N (A) - \mu(A) |<\eps.$$
\end{lemma}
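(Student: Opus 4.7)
The strategy is first to establish $\rho$-almost sure convergence $F_N(u)\to F(u)$, then upgrade it to $L^q(d\rho)$ convergence using the uniform bound \eqref{FNR}, and finally deduce the quantitative statement about $\mu_N(A)-\mu(A)$ by direct manipulation.

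First, I would check that $\mathbb{P}_{\leq N}u\to u$ in $L^{k+1}(\T)$ (and hence in $L^2(\T)$) for $\rho$-almost every $u$. This is standard: since $u^\omega$ lies in $H^s(\T)$ for every $s<\frac12$ almost surely, the Sobolev embedding $H^s\hookrightarrow L^{k+1}$ (valid for $s$ close enough to $\frac12$) and the Littlewood-Paley characterization give $\mathbb{P}_{\leq N}u\to u$ in $L^{k+1}$ $\rho$-a.s. Consequently $\int_\T (\mathbb{P}_{\leq N}u)^{k+1}\,dx\to \int_\T u^{k+1}\,dx$ and $\|\mathbb{P}_{\leq N}u\|_{L^2}\to \|u\|_{L^2}$ for $\rho$-a.e. $u$. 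The only subtlety is the indicator $\mathbf{1}_{\{\|\cdot\|_{L^2}\leq R\}}$, which is discontinuous on the sphere $\{\|u\|_{L^2}=R\}$. However, under $\rho$ the random variable $\|u\|_{L^2}^2=\sum_{n\neq 0}|g_n|^2/|n|^2$ has an absolutely continuous distribution (it is an infinite convex combination of independent $\chi^2$ variables with a continuous density), so $\rho(\|u\|_{L^2}=R)=0$. Thus $F_N(u)\to F(u)$ $\rho$-a.s.

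Next I would upgrade this to convergence in $L^q(d\rho)$ for any finite $q$. By \eqref{FNR}, the sequence $\{F_N\}$ is uniformly bounded in $L^{q+1}(d\rho)$, which gives uniform integrability of $\{|F_N-F|^q\}$ (since $|F_N-F|^q \le 2^q(|F_N|^q+|F|^q)$ is uniformly bounded in $L^{(q+1)/q}(d\rho)$). Combining uniform integrability with the $\rho$-a.s. convergence from Step~1, Vitali's convergence theorem gives $F_N\to F$ in $L^q(d\rho)$. This is the main technical point of the proof and the step where the uniform bound \eqref{FNR} is crucial.

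Finally, for the quantitative statement, write
\begin{align*}
\mu_N(A)-\mu(A)=\bigl(Z_N^{-1}-Z^{-1}\bigr)\int_A F_N\,d\rho\,+\,Z^{-1}\int_A (F_N-F)\,d\rho.
\end{align*}
Taking $A=\mathcal{F}L^{s,p}(\T)$ in the $L^1$ convergence gives $Z_N=\int F_N\,d\rho\to \int F\,d\rho=Z$, and since $Z>0$ (by Theorem~\ref{th:focusing}) we get $Z_N^{-1}\to Z^{-1}$. Bounding the two terms uniformly in $A$,
\begin{align*}
|\mu_N(A)-\mu(A)|\leq |Z_N^{-1}-Z^{-1}|\,\|F_N\|_{L^1(d\rho)}+Z^{-1}\|F_N-F\|_{L^1(d\rho)},
\end{align*}
and both quantities tend to zero as $N\to\infty$ by the $L^1$ convergence from Step~2 and by \eqref{FNR}. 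Choosing $N_0$ so that the right-hand side is less than $\varepsilon$ for $N\geq N_0$ concludes the proof. The main obstacle is really the $L^q$ convergence step: pure pointwise convergence is easy, but one needs the uniform $L^q$-bound coming from Theorem~\ref{th:focusing} to avoid losing integrability when passing to the limit.
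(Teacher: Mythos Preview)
Your proof is correct. The paper does not actually supply a proof of this lemma; it is stated as a ``known result'' and used as input for the subsequent argument. The approach you outline---$\rho$-a.s.\ convergence of $F_N$ to $F$ (using $\PP_{\leq N}u\to u$ in $L^{k+1}$ and $L^2$, together with $\rho(\|u\|_{L^2}=R)=0$), upgraded to $L^q(d\rho)$ via the uniform bound \eqref{FNR} and Vitali, followed by the uniform-in-$A$ estimate on $|\mu_N(A)-\mu(A)|$---is exactly the standard argument one finds for this type of statement in the literature (e.g.\ \cite{BO94,LRS,OhSosoeTolomeo}), and all steps are justified. One small remark: the Littlewood--Paley characterization is not needed for $\PP_{\leq N}u\to u$ in $H^s$; dominated convergence on the Fourier side suffices.
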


Consider the following truncated gauged gKdV equation ($\mathcal{G}$-gKdV$_N$)
\begin{align}\label{truncated2}
\begin{cases}
\partial_t u_N + \partial_x^3 u_N  = k \PP_{\leq N} \Big( \partial_x (\PP_{\leq N}  u_N ) \cdot \PP ( \PP_{\leq N} u_N)^{k-1} \Big),\\
u_N\vert_{t=0} = u_0.
\end{cases}
\end{align}
The local well-posedness of \eqref{truncated2} follows from the proof of Theorem~\ref{th:lwp}, with the same time of existence $\dl \sim (1+ \|u_0\|_{\FL^{s,p}})^{-\gamma}$ as the solution  $u$ of \eqref{gauged}. Moreover, as we see below, \eqref{truncated2} is globally well-posed. Note that we can decompose $u_N$ into high and low frequencies $u_N = u_{\text{low}} + u_{\text{high}}$, which solve the following equations
 \begin{align*}
 \partial_t u_\text{high}  + \partial_x^3 u_\text{high} &= 0, \\
 \partial_t u_\text{low} + \partial_x^3 u_\text{low} &= k \PP_{\leq N} \Big(  \partial_x u_\text{low} \cdot \PP ( u_\text{low})^{k-1} \Big) ,
 \end{align*}
 allowing us to discuss the two decoupled flows $\Phi_{\text{high}}$ and $\Phi_{\text{low}}$, respectively. The high frequency part evolves linearly, therefore $\Phi_{\text{high}}(t) = S(t)\PP_{>N}$. We can view the low frequency part as a finite-dimensional system of nonlinear ODEs on the Fourier coefficients of $u_N$. In fact, for $0<|n| \leq N$ and $c_n = \ft{u_N}(n)$, we want to solve the following system for $c=\{c_n\}_{0<|n|\leq N}\in \C^{2N}$ with $c_{-n} = \conj{c}_n$,
 	\begin{align}\label{finite_system}
 	\frac{d}{dt} c_n = in^3 c_n + k \sum_{\substack{n=n_0 + \ldots + n_{k-1}\\ n\neq n_0}} in_0 c_{n_0} \cdots c_{n_{k-1}} = \mathbf{N}_n(c) .
 	\end{align}
Since $\mathbf{N}=\{\mathbf{N}_n\}_{0<|n|\leq N}$ is Lipschitz, we can conclude by the Cauchy-Lipschitz theorem that the system of ODEs is locally well-posed. Furthermore, we can extend these solutions globally-in-time since the $L^2$-norm of $u_N$ is conserved:
\begin{align}
\frac{d}{dt} M(u_N)(t) 
& = 2 \int_\T u_N \Big( - \partial_x^3 u_N + k \PP_{\leq N } \big( \partial_x \PP_{\leq N} u_N \cdot \PP (\PP_{\leq N} u_N)^{k-1} \big) \Big) \, dx \nonumber\\
&= \int_\T \partial_x \Big( \partial_x u_N \Big)^2 \, dx + \frac{2k}{k+1} \int_\T\dx \big( \PP_{\leq N} u_N\big)^{k+1} \, dx \nonumber\\
& \phantom{XXXXXXXX}-k \PP_0 \big( \PP_{\leq N} u_N \big)^{k-1} \int_\T \dx \big(\PP_{\leq N} u_N\big)^2 \,dx
= 0 .\label{mass_trunc}
\end{align}
Thus, $M(u_N)(t) = M(u_0)$. In addition, the mass is also conserved for $u_\text{low}$, $M(u_\text{low})(t) = M(\PP_{\leq N} u_0)$, and the solution of \eqref{finite_system} exists globally-in-time, proving that $u_N$ extends to a global solution of \eqref{truncated2}.
We denote the flow of $\mathcal{G}$-gKdV$_N$ \eqref{truncated2} by $\Phi_N(t)$.

We now focus on proving invariance of the Gibbs measure associated with \eqref{truncated2}. We first decompose the measure $\rho = \rho_N \otimes \rho_N^\perp $, where
\begin{align*}
d \rho_N & = Z_N^{-1} e^{-\frac12 \sum_{0<|n|\leq N} |g_n|^2} \prod_{0<|n|\leq N} d g_n, \\
d\rho_N^\perp& = \tilde{Z}_N^{-1} e^{-\frac12 \sum_{|n|>N} |g_n|^2} \prod_{|n|>N} dg_n,
\end{align*}
which are probability measures in $\FL^{s,p}(\T)$ for $s< 1 - \frac1p$.
Let $\tilde{\mu}_N$ denote the finite dimensional Gibbs measure associated with density
\begin{align*}
d \tilde{\mu}_N (u ) = Z_N^{-1} F_{N} (u) d\rho_N (u).
\end{align*}
Then, $\mu_N = \tilde{\mu}_N \otimes \rho_N^\perp$ is the Gibbs measure associated with $\mathcal{G}$-gKdV$_N$ \eqref{truncated2}.

\begin{proposition}\label{inv_trunc}
	The finite-dimensional Gibbs measure $\tilde{\mu}_N$ is invariant under the flow $\Phi_{\text{low}}$. Moreover, the Gibbs measure $\mu_N$ is invariant under the flow $\Phi_N$ of $\mathcal{G}$-gKdV$_N$ \eqref{truncated2}.
\end{proposition}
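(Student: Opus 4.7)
The plan is to reduce invariance of $\mu_N$ under $\Phi_N$ to the two decoupled pieces described above. Since $\Phi_N$ splits as the product $\Phi_{\text{low}}\times \Phi_{\text{high}}$ on the complementary Fourier supports and $\mu_N = \tilde{\mu}_N \otimes \rho_N^\perp$, it suffices to establish invariance of each factor under the corresponding flow. The high-frequency part is immediate: $\Phi_{\text{high}}(t) = S(t)\PP_{>N}$ acts on each Fourier mode by the unitary phase rotation $\ft{u}(n)\mapsto e^{in^3 t}\ft{u}(n)$, preserving $|\ft{u}(n)|$ and hence the Gaussian density of $\rho_N^\perp$. Thus the main task is invariance of the finite-dimensional Gibbs measure $\tilde{\mu}_N$ under $\Phi_{\text{low}}$.

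Writing $v=\PP_{\leq N}u_N$, the density of $\tilde{\mu}_N$ on the $2N$-real-dimensional mean-zero low-mode space has the form $\1_{\{\|v\|_{L^2} \leq R\}} e^{-H(v)}$ with respect to Lebesgue measure $dv$. Hence it suffices to establish: (i) $\Phi_{\text{low}}$ preserves $dv$; (ii) $H$ is conserved along $\Phi_{\text{low}}$; (iii) $M(v) = \|v\|_{L^2}^2$ is conserved. Item (iii) is already proved in \eqref{mass_trunc}. For (ii), using the algebraic identity $k\PP(v^{k-1})\dx v = \dx v^k - k\PP_0(v^{k-1})\dx v$, rewrite the $\mathcal{G}$-gKdV$_N$ vector field as $-\dx^3 v + \PP_{\leq N}\dx v^k + W(v)$ with gauge correction $W(v) := -k\PP_0(v^{k-1})\dx v$. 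The first two terms constitute the \emph{ungauged} truncated gKdV vector field on the low-mode space, which is Hamiltonian with Hamiltonian $H$ and hence conserves $H$. The correction contributes $\langle \delta H/\delta v, W\rangle = -k\PP_0(v^{k-1})\intt_\T(-\dx^2 v + v^k)\dx v\,dx = 0$, since $\intt_\T \dx^2 v\cdot\dx v\,dx = 0$ and $\intt_\T v^k\dx v\,dx = 0$ by integration by parts.

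The main obstacle is (i), because, as noted in Remark~\ref{rm:DNLS}, the gauge transformation disrupts the Hamiltonian structure, so Liouville's theorem does not apply to $\Phi_{\text{low}}$ directly. I would again use the same splitting. The ungauged truncated flow is Hamiltonian on the low-mode space and therefore preserves the associated Liouville volume, which coincides with $dv$ up to a multiplicative constant. It remains to show $\operatorname{div}W \equiv 0$. The Fr\'echet derivative is $DW(v)h = -k\big[(k-1)\PP_0(v^{k-2}h)\dx v + \PP_0(v^{k-1})\dx h\big]$. The operator $\dx$ on the real low-mode space has zero trace since it is skew-adjoint in $L^2(\T)$, so the second summand of $DW$ is traceless. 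The first summand $h\mapsto\PP_0(v^{k-2}h)\dx v$ is rank one (it factors through the linear functional $h\mapsto\PP_0(v^{k-2}h)$), with trace $\intt_\T v^{k-2}\dx v\,dx = (k-1)^{-1}\intt_\T\dx(v^{k-1})\,dx = 0$. Hence $\operatorname{tr}DW(v) = 0$, so $W$ is divergence-free; combined with the Hamiltonian piece this yields (i). Together with (ii) and (iii), this proves invariance of $\tilde{\mu}_N$ under $\Phi_{\text{low}}$, completing the proof.
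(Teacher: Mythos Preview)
Your proof is correct, and it takes a genuinely different route from the paper's own argument for the key step (invariance of Lebesgue measure under $\Phi_{\text{low}}$). The paper works directly in Fourier coordinates: writing $c_n=a_n+ib_n$ and the vector field as $\mathbf{N}_n(c)$ from \eqref{finite_system}, it computes $\partial_{a_n}\Re\mathbf{N}_n+\partial_{b_n}\Im\mathbf{N}_n$ for each $n$ by hand, obtaining a common expression proportional to $\Re\big(\sum_{0=n_0+\ldots+n_{k-2},\,n\neq n_0} in_0 c_{n_0}\cdots c_{n_{k-2}}\big)$, and then shows the full divergence vanishes after summing over $n$ because $\int_\T \dx u_{\text{low}}\cdot u_{\text{low}}^{k-2}\,dx=0$. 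Your argument instead splits the vector field as (Hamiltonian part) $+$ (gauge correction $W$): the Hamiltonian piece is automatically divergence-free by Liouville's theorem for the constant-coefficient symplectic structure $\dx^{-1}$ on the low-mode space, and you dispatch $\operatorname{div}W=0$ abstractly via the rank-one trace formula and skew-adjointness of $\dx$. Your approach is cleaner and more conceptual, avoiding the explicit Fourier bookkeeping; the paper's direct computation is more self-contained and makes the cancellation mechanism (namely the same integral $\int v^{k-2}\dx v=0$ that you use for the rank-one trace) visible at the level of Fourier modes. The treatment of conservation of $H$ is also slightly different: the paper refers to a ``straightforward computation'' analogous to \eqref{mass_trunc}, whereas you again exploit the Hamiltonian/gauge splitting to reduce to two elementary integration-by-parts identities.
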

\begin{proof}
	We follow the strategy in \cite{NahOhBelletSta12}.
	We start by establishing the invariance of $\tilde{\mu}_N$ under the flow of $\Phi_{\text{low}}$. The conservation of mass for $u_\text{low}$ follows from the calculation in \eqref{mass_trunc} by replacing $u_N$ by $u_\text{low} = \PP_{\leq N} u_N$. An analogous straightforward computation establishes the conservation of the Hamiltonian for $u_\text{low}$. It remains to show the invariance of the Lebesgue measure on $\C^{2N}$ with respect to the system defined in \eqref{finite_system}. We can rewrite the system as 
	\begin{align*}
		\frac{d}{dt} a_n & = \Re\big( \mathbf{N}_n(\{a_n, b_n\} )\big) , \qquad
		\frac{d}{dt} b_n  = \Im \big( \mathbf{N}_n(\{a_n, b_n\} )\big), 
	\end{align*}
	where $c_n = a_n + i b_n$. Thus, invariance of the Lebesgue measure follows from Liouville's theorem once we establish that the divergence of the vector field vanishes:
	\begin{equation}\label{liouville}
		\sum_{1\leq |n| \leq N} \Big(\frac{\partial \Re( \mathbf{N}_n) }{\partial a_n} + \frac{\partial \Im(\mathbf{N}_n)}{ \partial b_n } \Big)= 0.
	\end{equation}
	For $1\leq |n| \leq N$, we have
	\begin{align*}
		\frac{\partial \Re( \mathbf{N}_n)}{\partial a_n} & = \frac{\partial}{\partial a_n} \bigg( -n^3b_n+\frac{k}{2} \sum_{\substack{n=n_0+\ldots +n_{k-1}\\ n\neq n_0}} \big( in_0 c_{n_0} \cdots c_{n_{k-1}} -in_0\conj{c_{n_0}}\cdots \conj{c_{n_{k-1}}}\big) \bigg) \\
		& = \frac{k}{2} \sum_{\substack{n=n_0 + \ldots + n_{k-1} \\ n\neq n_0}} \sum_{j=1}^{k-1} \bigg( in_0c_{n_0} \dl(n-n_j) \prod_{\substack{i=1\\i\neq j}}^{k-1} c_{n_i} - in_0 \conj{c_{n_0}} \dl(n-n_j) \prod_{\substack{i=1\\i\neq j}}^{k-1} \conj{c_{n_i}}  \bigg)\\
		& = \frac{k(k-1)}{2} \sum_{\substack{0=n_0+\ldots +n_{k-2}\\ n\neq n_0}} \big( in_0 c_{n_0} \cdots c_{n_{k-2}} - in_0\conj{c_{n_0}}\cdots \conj{c_{n_{k-2}}}\big) \\
		& = k(k-1)\Re \bigg( \sum_{\substack{0=n_0+\ldots +n_{k-2}\\ n\neq n_0}} in_0 c_{n_0} \cdots c_{n_{k-2}}  \bigg) .
	\end{align*}
	Similarly, we have
	\begin{align*}
		\frac{\partial \Im( \mathbf{N}_n)}{ \partial b_n} & = \frac{\partial}{\partial b_n} \bigg( n^3a_n+\frac{k}{2i} \sum_{\substack{n=n_0+\ldots +n_{k-1}\\ n\neq n_0}} \big( in_0 c_{n_0} \cdots c_{n_k} +in_0\conj{c_{n_0}}\cdots \conj{c_{n_{k-1}}}\big) \bigg) \\
		& = k(k-1)\Re \bigg( \sum_{\substack{0=n_0+\ldots +n_{k-2}\\ n\neq n_0}} in_0 c_{n_0} \cdots c_{n_{k-2}}  \bigg) .
	\end{align*}
	Since 
	\begin{align*}
		\sum_{0=n_0+\ldots+n_{k-2}} in_0 c_{n_0} \cdots c_{n_{k-2}} 
		& = \int_\T \dx u_\text{low} \cdot u_\text{low}^{k-2} \,dx = 0,
	\end{align*}
	we conclude \eqref{liouville} after summing up over $n$.
	Lastly, the invariance of $\mu_N = \tilde{\mu}_N \otimes \rho_N^\perp$ under the flow $\Phi_N = \big(\Phi_\text{low}, \Phi_\text{high}\big)$ follows from that of $\tilde{\mu}_N$ under the flow $\Phi_\text{low}$ and the invariance of Gaussian measures under rotation.
\end{proof}

Let $2 < p <\infty$ and $s_* = s_*(p)$ given by Theorem~\ref{th:lwp} such that \eqref{gkdv} and \eqref{gauged} are locally well-posed in $\FL^{s,p}(\T)$ for $s_*<s<1 - \frac1p$.
The following two lemmas can be shown through the method in \cite{BO94} (see also \cite{Tzv08, BurqTzv07, OhKdv09, NahOhBelletSta12}).
The proof of Lemma~\ref{lm:uniform} requires the tail estimate in Lemma~\ref{lm:tail}, Theorem~\ref{th:lwp}, Proposition~\ref{inv_trunc} and \eqref{FNR}. Lemma~\ref{lm:approx} is purely deterministic and follows from the local theory for $\mathcal{G}$-gKdV \eqref{gauged}. 
Proofs of these lemmas will be given in Appendix~\ref{ap:measure}.

\begin{lemma}\label{lm:uniform}
	Let $s_*<s<1 - \frac1p$. Then, there exists $C_0>0$ (independent of $s$) and $C_s>0$ such that: for all $N\in\NB$, $T\geq 1$, $0<\eps\leq \frac12$, $A \geq 1$, there exists $\Omega_N^s(T,\eps,A)\subset \FL^{s,p}(\T)$ 
	such that:
	
	\noi {\rm(a)} $\mu_N \big( \FL^{s,p}(\T) \setminus \Omega_N^s (T,\eps, A)  \big) < \eps$.
	
	\noi {\rm (b)} For $u_0 \in \Omega_N^s(T, \eps, A)$, the solution $u_N$ to \eqref{truncated2} satisfies
		\begin{equation*}
		\big\| u_N(t) \big\|_{\FL^{s,p}} \leq A C_0 C_s \big(\log \tfrac{T}{\eps}\big)^\frac12, \quad |t| \leq T.
		\end{equation*}
	
	\noi {\rm (c)} For $u_0\in \FL^{s,p}(\T)$, if the solution $u_N$ to \eqref{truncated2} satisfies
		\begin{equation*}
		\big\| u_N(t) \big\|_{\FL^{s,p}} \leq A C_s \big(\log \tfrac{T}{\eps}\big)^\frac12, \quad |t| \leq T,
		\end{equation*}
then $u_0\in \Omega _N^s(T,\eps ,A)$.
\end{lemma}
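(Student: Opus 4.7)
The plan is to execute the Bourgain invariant-measure scheme in the Fourier-Lebesgue setting. The ingredients are: the local well-posedness of \eqref{truncated2} obtained from Theorem~\ref{th:lwp} (with identical constants, since $\PP_{\leq N}$ is bounded by $1$ on every $X^{s,b}_{p,q}$, uniformly in $N$), giving time of existence $\delta \sim (1+\|u_0\|_{\FL^{s,p}})^{-\gamma}$ and doubling bound $\sup_{|t|\leq \delta}\|u_N(t)\|_{\FL^{s,p}}\leq C_0\|u_0\|_{\FL^{s,p}}$ with $C_0$ depending only on the embedding constants from Lemma~\ref{lm:linear} (hence independent of $s$); the invariance of $\mu_N$ under $\Phi_N$ from Proposition~\ref{inv_trunc}; the Gaussian tail estimate from Lemma~\ref{lm:tail}; the uniform bound \eqref{FNR} on $F_N$; and the uniform lower bound $Z_N\geq c>0$, which follows from $Z_N\to Z>0$ in Lemma~\ref{lm:F}.

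First I would set $K:=AC_s\big(\log\tfrac{T}{\eps}\big)^{1/2}$, with $C_s>0$ to be chosen large at the end depending only on $s$, and let $\delta:=c_0(1+K)^{-\gamma}$ be the associated local time of existence. By Theorem~\ref{th:lwp} applied to \eqref{truncated2}, any solution $u_N$ with $\|u_N(n\delta)\|_{\FL^{s,p}}\leq K$ satisfies $\|u_N(t)\|_{\FL^{s,p}}\leq C_0K$ throughout $[n\delta,(n+1)\delta]$. With $B_K:=\{u\in\FL^{s,p}(\T):\|u\|_{\FL^{s,p}}\leq K\}$, I would then define
\[ \Omega_N^s(T,\eps,A) := \bigcap_{n\in\Z,\;|n\delta|\leq T}\Phi_N(-n\delta)B_K. \]
Property (c) is immediate from this definition, and (b) follows by iterating the local theory from one discrete time to the next: if $u_0\in\Omega_N^s(T,\eps,A)$, then $\Phi_N(n\delta)u_0\in B_K$ for every admissible $n$, hence $\|u_N(t)\|_{\FL^{s,p}}\leq C_0 K=AC_0 C_s\big(\log\tfrac{T}{\eps}\big)^{1/2}$ throughout $|t|\leq T$.

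The quantitative content is (a). For each discrete time $n\delta$, invariance of $\mu_N$ combined with H\"older's inequality applied to $d\mu_N=Z_N^{-1}F_N\,d\rho$ and Lemma~\ref{lm:tail} gives
\[ \mu_N\big(\FL^{s,p}(\T)\setminus\Phi_N(-n\delta)B_K\big)=\mu_N(\FL^{s,p}(\T)\setminus B_K)\leq \tfrac{\|F_N\|_{L^q(d\rho)}}{Z_N}\,\rho(\FL^{s,p}(\T)\setminus B_K)^{1/q'}\les e^{-c_1 K^2}, \]
uniformly in $N$, with $c_1=c_1(s,p,q)>0$. A union bound over the $\mathcal{O}(T/\delta)=\mathcal{O}(T(1+K)^\gamma)$ discrete times then yields
\[ \mu_N\big(\FL^{s,p}(\T)\setminus\Omega_N^s(T,\eps,A)\big)\les T(1+K)^\gamma e^{-c_1 K^2}. \]
Since $K^2\geq C_s^2\log\tfrac{T}{\eps}$, choosing $C_s$ sufficiently large (in terms of $s,p,q,\gamma,c_1$) renders the right-hand side at most $\eps$, which is (a). The main (and essentially only) obstacle is this final balancing: one needs the Gaussian decay $e^{-c_1K^2}$ to dominate the polynomial prefactor $T(1+K)^\gamma$, which is automatic because $K\sim(\log\tfrac{T}{\eps})^{1/2}$ makes $\log(1+K)$ doubly logarithmic in $T/\eps$, so $c_1K^2$ easily absorbs $\log T+\log(1/\eps)+\gamma\log(1+K)$ once $C_s$ is taken large. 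No new analytic input beyond the ingredients above is required.
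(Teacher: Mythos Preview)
Your proposal is correct and follows essentially the same approach as the paper: define $\Omega_N^s(T,\eps,A)$ as the intersection of preimages of the ball $B_K$ under the truncated flow at the discrete times $n\delta$, read off (b) and (c) from local theory and the definition, and obtain (a) by combining invariance of $\mu_N$, the tail estimate, the uniform $L^q$ bound on $F_N$, and a union bound. The only cosmetic differences are that the paper uses Cauchy--Schwarz (your H\"older with $q=2$) and does not spell out the lower bound on $Z_N$, absorbing it into the implicit constants.
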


\begin{lemma}\label{lm:approx}
	For any $s_*<s<\s<1 - \frac1p$, $T\geq 1$, and $K\geq 1$, there exists $N_0\in\NB$ such that:
	
	\noi \rm{(a)} Let $N\geq N_0$ and $u_N \in C\big(\R; \FL^{\s,p}(\T)\big)$ be the solution of $\mathcal{G}$-gKdV$_N$ \eqref{truncated2} with initial data $u_0\in \FL^{\s,p}(\T)$. Assume that $\| u_N(t)\|_{\FL^{\s,p}} \leq K$ for $|t| \leq T$. Then, there exists a unique solution $u \in C\big([-T,T] ; \FL^{s,p}(\T)\big) \cap Z^{s,\frac12}_{p}(T)$ to $\mathcal{G}$-gKdV \eqref{gauged} with $u(0) = u_0$ satisfying
	$$ \| u(t)-\mathbf{P}_{\leq N}u_N(t)\|_{\F L^{s,p}}\leq  \big( \tfrac{N_0}{N} \big)^{\s -s } K,\qquad |t|\leq T. $$
	In particular, $\|u(t) \|_{\FL^{s,p}} \leq 2K$ for $|t| \leq T$.
	
	\noi \rm{(b)} Let $u\in C\big([-T,T] ; \FL^{\s,p}(\T) \big) \cap Z^{\s,\frac12}_{p}(T)$ be a solution of $\mathcal{G}$-gKdV \eqref{gauged} with $u(0) = u_0$ satisfying $\|u(t)\|_{\FL^{\s,p}} \leq K$ for $|t| \leq T$. Then, for any $N\geq N_0$, the solution $u_N$ of $\mathcal{G}$-gKdV$_N$ \eqref{truncated2} with initial data $u_0$ satisfies 
	$$\|u(t) - \PP_{\leq N} u_N(t) \|_{\FL^{s,p}} \leq \big( \tfrac{N_0}{N} \big)^{\s - s } K, \qquad |t| \leq T.$$
	In particular, $\|u_N(t) \|_{\FL^{s,p}} \leq 3 K$ for $|t| \leq T$.
\end{lemma}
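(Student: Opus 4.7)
The plan is to prove (a) and (b) simultaneously by estimating $w := u - v_N$, with $v_N := \PP_{\leq N} u_N$, in $Z^{s,\frac12}_p$ on short intervals and iterating. Applying $\PP_{\leq N}$ to \eqref{truncated2} shows that $v_N$ satisfies the Duhamel form of \eqref{gauged} with nonlinearity pre-multiplied by $\PP_{\leq N}$, so $w$ obeys
\begin{equation*}
w(t) = S(t)\PP_{>N}u_0 \pm k\!\int_0^t \!S(t-t')\Big\{\big[\mathcal{N}(u,\ldots,u)-\mathcal{N}(v_N,\ldots,v_N)\big] + \PP_{>N}\mathcal{N}(v_N,\ldots,v_N)\Big\}(t')\,dt',
\end{equation*}
with $\mathcal{N}$ the multilinear form in \eqref{nonlinearity}. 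The roles of $u$ and $v_N$ in this difference are symmetric, so the same estimate will serve both parts.

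I would fix $\delta=\delta(K)>0$ small enough that the contraction of Theorem~\ref{th:lwp} applies to both \eqref{gauged} and \eqref{truncated2} (uniformly in $N$, since inserting an outer $\PP_{\leq N}$ leaves the multilinear bounds unchanged) on any interval of length $\delta$ with $\FL^{s,p}$-data of norm $\leq 3K$, yielding solutions of $Z^{s,\frac12}_p(\delta)$-norm $\lesssim K$. Expanding the first bracket telescopically as a sum of $k$ terms, each containing a single factor of $w$ and $k-1$ factors among $u,v_N$, Proposition~\ref{prop:nonlinear} gives
\begin{equation*}
\big\|\mathcal{N}(u,\ldots,u)-\mathcal{N}(v_N,\ldots,v_N)\big\|_{Z^{s,-\frac12}_p(\delta)} \lesssim \delta^\theta K^{k-1}\|w\|_{Z^{s,\frac12}_p(\delta)}.
\end{equation*}
For the projection term, combining the elementary gain $\|\PP_{>N}f\|_{Z^{s,b}_p}\leq N^{-(\sigma-s)}\|f\|_{Z^{\sigma,b}_p}$ with Proposition~\ref{prop:nonlinear} at regularity $\sigma$ and the local-theory bound $\|v_N\|_{Z^{\sigma,\frac12}_p(\delta)}\lesssim K$ (available from Theorem~\ref{th:lwp} at level $\sigma$, since $\sigma<1-\tfrac{1}{p}$) yields
\begin{equation*}
\big\|\PP_{>N}\mathcal{N}(v_N,\ldots,v_N)\big\|_{Z^{s,-\frac12}_p(\delta)} \lesssim N^{-(\sigma-s)}\delta^\theta K^k.
\end{equation*}

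By Lemma~\ref{lm:linear} on each subinterval $I_j=[j\delta,(j+1)\delta]$, after absorbing $\delta^\theta K^{k-1}\|w\|$ into the left-hand side (possible by further shrinking $\delta$) and using $Z^{s,\frac12}_p\embeds C_t\FL^{s,p}$, the quantities $\eta_j:=\|w(j\delta)\|_{\FL^{s,p}}$ obey $\eta_{j+1}\leq C\eta_j+C\,N^{-(\sigma-s)}$, with $\eta_0\leq N^{-(\sigma-s)}K$. A discrete Gr\"onwall iteration over $M\sim T/\delta$ steps produces $\eta_j\leq C''(T,K,s,\sigma,p)\,N^{-(\sigma-s)}$, and choosing $N_0$ so that $C''\leq N_0^{\sigma-s}K$ gives $\|u(t)-v_N(t)\|_{\FL^{s,p}}\leq (N_0/N)^{\sigma-s}K$ for all $|t|\leq T$. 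The sup-norm bounds on $u$ and $u_N$ then follow from the triangle inequality, using $\|v_N\|_{\FL^{s,p}}\leq K$ for (a), and the identity $u_N=v_N+S(t)\PP_{>N}u_0$ (the high-frequency part evolves linearly) together with $\|\PP_{>N}u_0\|_{\FL^{s,p}}\leq N^{-(\sigma-s)}K$ for (b).

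The main subtlety lies in part (a), where the full solution $u$ is not yet known to exist on $[-T,T]$: this is handled by a standard bootstrap, since the error bound forces $\|u(t)\|_{\FL^{s,p}}\leq \|v_N(t)\|_{\FL^{s,p}}+\|w(t)\|_{\FL^{s,p}}\leq 2K$ whenever the local solution is defined, so Theorem~\ref{th:lwp} extends $u$ past the next endpoint with uniform step $\delta=\delta(2K)$, closing the induction. Part (b) requires no such bootstrap since $u_N$ is already globally defined by the discussion preceding Proposition~\ref{inv_trunc}. The only genuinely delicate point is keeping $\delta$ uniform in $N$ in the contraction for \eqref{truncated2}, which reduces to checking that Proposition~\ref{prop:nonlinear} tolerates an extra outer $\PP_{\leq N}$ without loss of constants, and this is transparent from the proof of that proposition.
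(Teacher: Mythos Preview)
Your proposal is correct and follows essentially the same strategy as the paper: partition $[0,T]$ into $\sim T/\delta$ subintervals of length $\delta\sim K^{-\gamma}$, write a Duhamel equation for $w=u-\PP_{\leq N}u_N$ on each, extract the factor $N^{-(\sigma-s)}$ from the $\PP_{>N}$-source term via Proposition~\ref{prop:nonlinear} at level $\sigma$, control $\mathcal{N}(u)-\mathcal{N}(v_N)$ telescopically, absorb the resulting $\delta^\theta K^{k-1}\|w\|$ term, and iterate. The paper organizes part~(a) slightly differently by running the contraction directly on the map $w\mapsto\Xi_j[w]$ (so that existence of $u$ and the bound on $w$ come out together), rather than first building $u$ by local theory plus bootstrap and then estimating $w$ a~posteriori as you do; the two viewpoints are equivalent.

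One point deserves more care. Your claim that ``the roles of $u$ and $v_N$ in this difference are symmetric'' is not quite right as written for part~(b). The decomposition you display places the source term as $\PP_{>N}\mathcal{N}(v_N,\ldots,v_N)$, and your bound on it invokes $\|v_N\|_{Z^{\sigma,\frac12}_p(\delta)}\lesssim K$. In~(b), however, only $u$ is controlled in $\FL^{\sigma,p}$ a~priori; after the first subinterval you have no $\sigma$-level bound on $u_N$ (the difference you prove is only at level $s$), so you cannot re-invoke the local theory for $u_N$ at level $\sigma$ on later subintervals. The paper resolves this by using, for~(b), the alternative algebraic splitting
\[
\mathcal{N}(u)-\PP_{\leq N}\mathcal{N}(v_N)=\PP_{>N}\mathcal{N}(u)+\PP_{\leq N}\bigl[\mathcal{N}(u)-\mathcal{N}(u-w)\bigr],
\]
so that both the high-frequency source and the telescoped difference reference only $u$ and $w$, for which the needed $\sigma$- and $s$-level bounds are available from the hypothesis on $u$. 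With this adjustment (which is presumably what your ``symmetry'' remark intends), your argument for~(b) goes through verbatim.
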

\begin{remark}\rm
	We can choose, for example, $N_0 \sim \exp\big(\frac{C K^\gamma T}{\s -s}\big)$ with $\gamma =\frac{k-1}{\theta}$ and $\theta >0$ given in Proposition~\ref{prop:nonlinear}.
\end{remark}
Using Lemma~\ref{lm:uniform} and Lemma~\ref{lm:approx}, we establish almost a.s. global well-posedness of the $\mathcal{G}$-gKdV equation \eqref{gauged}.

	\begin{proposition}\label{prop:almostGWP}
		Let $s_*< s < 1 - \frac1p$, $T\geq 1$, and $0<\eps\leq \frac12$. For any $A\geq 1$, there exists $N_1=N_1(A) \in \NB$ such that the set $\Sigma_{T,\eps}^{s}(A) := \Omega_{N_1}^{\s} (T,\tfrac{\eps}{2}, A)$, with $\s =\frac12 (s+1-\frac1p)$, satisfies:
	
	\noi {\rm (a)} $\mu\big(\FL^{s,p}(\T) \setminus \Sigma_{T,\eps}^s(A) \big) < \eps$;
	
	\noi {\rm (b)} For $u_0\in\Sigma_{T,\eps}^s(A)$, there exists a unique corresponding solution $u\in C\big( [-T,T];\FL^{s,p}(\T)\big)\cap Z^{s,\frac12}_p(T)$ to $\mathcal{G}$-gKdV \eqref{gauged} on $[-T,T]$ such that 
	\begin{align*}
		\|u(t)\|_{\FL^{s,p}} \leq 2 \sqrt{2} A C_0 C_\s \big(\log \tfrac{T}{\eps} \big)^\frac12 , \quad |t| \leq T.
	\end{align*}

\end{proposition}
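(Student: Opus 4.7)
The plan is to exploit the invariant dynamics at the higher regularity $\sigma=\frac12(s+1-\frac1p)\in (s,1-\frac1p)$ and then descend to the target regularity $s$ via the approximation lemma. Concretely, apply Lemma~\ref{lm:uniform} at regularity $\sigma$ with the same $T$ and $A$, but with tolerance $\eps/2$ in place of $\eps$. This gives, for \emph{every} $N$, a set $\Omega_N^{\sigma}(T,\eps/2,A)\subset \FL^{\sigma,p}(\T)$ with $\mu_N$-measure at least $1-\eps/2$, on which the truncated solution $u_N$ satisfies
\[
\|u_N(t)\|_{\FL^{\sigma,p}}\leq A C_0 C_\sigma \bigl(\log\tfrac{2T}{\eps}\bigr)^{1/2}\leq \sqrt{2}\,A C_0 C_\sigma \bigl(\log\tfrac{T}{\eps}\bigr)^{1/2}=:K,\qquad |t|\leq T,
\]
where in the last step I use $T/\eps\geq 2$ (since $T\geq 1$, $\eps\leq \frac12$) to write $\log(2T/\eps)\leq 2\log(T/\eps)$.

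Next I would transfer the measure bound from $\mu_{N}$ to the target Gibbs measure $\mu$ using Lemma~\ref{lm:F}: there is some $N'_0$ so that for all $N\geq N'_0$ and every measurable set $B$, $|\mu_N(B)-\mu(B)|<\eps/2$. Applying this to the complement of $\Omega_N^{\sigma}(T,\eps/2,A)$ (which is well-defined in $\FL^{s,p}(\T)$ because $\FL^{\sigma,p}(\T)\hookrightarrow \FL^{s,p}(\T)$ and $\mu$ is supported in $\bigcap_{s<1-\frac1p}\FL^{s,p}(\T)$) yields
\[
\mu\bigl(\FL^{s,p}(\T)\setminus \Omega_N^{\sigma}(T,\tfrac{\eps}{2},A)\bigr)<\tfrac{\eps}{2}+\tfrac{\eps}{2}=\eps.
\]
This proves part~(a) as soon as $N_1\geq N'_0$.

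For part~(b), I apply Lemma~\ref{lm:approx}(a) with the pair of regularities $(s,\sigma)$, time $T$, and threshold $K$ above. That lemma produces an integer $N''_0=N''_0(K,T,\sigma-s)\sim\exp(CK^\gamma T/(\sigma-s))$ so that, whenever $N\geq N''_0$ and the $N$-truncated solution $u_N$ starting at $u_0$ stays bounded by $K$ in $\FL^{\sigma,p}$ on $[-T,T]$, there is a unique solution $u\in C([-T,T];\FL^{s,p}(\T))\cap Z^{s,1/2}_p(T)$ of $\mathcal{G}$-gKdV with $u(0)=u_0$ and $\|u(t)\|_{\FL^{s,p}}\leq 2K$. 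Choosing
\[
N_1 := \max\{N'_0,\, N''_0(K,T,\sigma-s)\},
\]
and setting $\Sigma^s_{T,\eps}(A):=\Omega_{N_1}^{\sigma}(T,\eps/2,A)$, both requirements are simultaneously met: for any $u_0\in \Sigma^s_{T,\eps}(A)$ Lemma~\ref{lm:uniform}(b) yields $\|u_{N_1}(t)\|_{\FL^{\sigma,p}}\leq K$, and Lemma~\ref{lm:approx}(a) upgrades this to the desired solution $u$ satisfying $\|u(t)\|_{\FL^{s,p}}\leq 2K=2\sqrt{2}AC_0 C_\sigma(\log T/\eps)^{1/2}$.

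The only subtle point is that both the measure transfer (Lemma~\ref{lm:F}) and the approximation (Lemma~\ref{lm:approx}) force $N_1$ to be large, with $N_1$ from Lemma~\ref{lm:approx} depending super-polynomially on the intermediate bound $K$; but since $K$ is determined by $A$, $T$, $\eps$, $C_0$, $C_\sigma$ alone, there is no circularity and $N_1=N_1(A)$ (with implicit dependence on $T,\eps,s,p$) can be chosen as above. The gain of regularity from $s$ to $\sigma$, combined with the factor $\sqrt{2}$ absorbed into the stated constant $2\sqrt{2}$, is precisely what makes the two lemmas compatible.
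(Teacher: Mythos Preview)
Your proposal is correct and follows essentially the same approach as the paper: apply Lemma~\ref{lm:uniform} at the higher regularity $\sigma$ with tolerance $\eps/2$, use Lemma~\ref{lm:F} to transfer the $\mu_{N_1}$-bound to a $\mu$-bound on the complement, and invoke Lemma~\ref{lm:approx}(a) to upgrade the truncated $\FL^{\sigma,p}$ control to a genuine solution with the stated $\FL^{s,p}$ bound, choosing $N_1$ large enough for both lemmas. The only cosmetic difference is the order of presentation (the paper establishes (b) before (a)).
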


\begin{proof}
	Lemma~\ref{lm:uniform}(b) shows that for $u_0\in\Sigma^{s}_{T,\eps}(A)$ we have
	$$\|\Phi_{N_1}(t) (u_0) \|_{\FL^{\s,p}} \leq A C_0 C_\s \big(\log \tfrac{2T}{\eps}\big)^\frac12 , \quad |t| \leq T.$$
	From Lemma~\ref{lm:approx}(a), there exists a unique solution $u$ to $\mathcal{G}$-gKdV on $[-T,T]$ with $u(0) = u_0$ satisfying 
	\begin{align*}
		\| u(t) \|_{\FL^{s,p}} \leq 2 A C_0 C_\s \big(\log \tfrac{2T}{\eps}\big)^\frac12, \qquad |t| \leq T,
	\end{align*}
	provided $N_1$ is large enough. The intended estimate follows from $\log(2x) \leq 2 \log x$ for $x \geq 2$.
%
%
	Note that from Lemma~\ref{lm:F} there exists $N_2\in\NB$ such that 
	$$ \big| \mu _N(A)-\mu (A)\big| <\tfrac{\eps}{2} $$
	for any $N\geq N_2$ and measurable set $A$. By taking $N_1$ larger so that the previous bound holds, using Lemma~\ref{lm:uniform}(a) and the fact that $\FL^{s,p}(\T)$, $\FL^{\s,p}(\T)$ have full $\mu$-measure, we have
	\begin{align*}
	\mu \big(\FL^{s,p}(\T)  \setminus \Sigma_{T,\eps}^s(A)\big) & \leq \mu_{N_1} \big( \FL^{\s,p}(\T) \setminus \Omega^\s_{N_1} (T,\tfrac{\eps}{2}, A) \big) + \tfrac{\eps}{2}  <  \eps,
	\end{align*}
	which completes the proof.
\end{proof}

We can now show Theorem~\ref{th:invariance}.
\begin{proof}[Proof of Theorem~\ref{th:invariance}]
	This proof follows the approaches in \cite{Tzv08,NahOhBelletSta12}.
	We first establish almost sure global well-posedness of $\mathcal{G}$-gKdV. Define an increasing sequence $\{s_j\}_{j\in\NB}$ by $s_1=\frac12 (s_*+1 - \frac1p)$ and $s_{j+1}=\frac12 (s_j+1-\frac1p)$, which converges to $1 - \frac1p$ as $j\to\infty$. 
	Fix $0<\eps \leq 1$ and let $T_j = 2^j$, $\eps_j = 2^{-j} \eps$, $j\in\NB$. For $\Sigma_{T_j, \eps_j}^{s_j}(2^k)$ as defined in Proposition~\ref{prop:almostGWP}, with $s=s_j$ and $\s= s_{j+1}$, let 
	$$\Sigma_\eps = \bigcap_{j=1}^\infty \Sigma_{T_j,\eps_j}^{s_j} = \bigcap_{j=1}^\infty \Big(\bigcup_{k=1}^\infty \Sigma_{T_j, \eps_j}^{s_j}(2^k) \Big).$$
	Lastly, let $\Sigma = \bigcup_{n=1}^\infty \Sigma_{\frac1n}$.
	
	First note that $\Sigma \subset \bigcap_{s<1 - \frac1p} \FL^{s,p}(\T)$. Let $u_0 \in \Sigma$, then for any $j\in\NB$, we have $u_0 \in \Sigma_{T_j, \eps_j}^{s_j}(2^k)$ for some $\eps = \frac1n$ and $k\in\NB$. Hence, by Proposition~\ref{prop:almostGWP}, there exists a solution $u \in C\big([-T_j, T_j]; \FL^{s_j,p}(\T)\big) \cap Z^{s_j,\frac12}_p(T_j)$ of $\mathcal{G}$-gKdV with $u(0) = u_0$. By uniqueness of local solutions in $Z^{s,\frac12}_p(T)$, we obtain a unique global solution $u\in \bigcap_{s<1- \frac1p} C\big(\R;\FL^{s,p}(\T)\big)$
. Moreover, since $\Sigma_{T_j, \eps_j}^{s_j} (2^k)$ is closed in $\FL^{s_1,p}(\T)$ and $\mu (\Sigma_\eps ^c)\leq \sum _{j\in \NB}\mu \big( (\Sigma^{s_j}_{T_j,\eps_j})^c\big) <\eps$, $\Sigma$ is $\mu$-measurable and $\mu(\Sigma^c)=0$. 
	
	We now establish that $\Phi(t) \Sigma = \Sigma$ for any $t\in\R$, where $\Phi(t):u_0\mapsto u(t)$ denotes the solution map of $\mathcal{G}$-gKdV defined above. Fix $\tau\in\R$. It suffices to show that $\Phi(\tau) \Sigma \subset \Sigma$, as the other inclusion follows from this and the reversibility of the flow. It suffices to show that $\Phi(\tau) \Sigma_\eps \subset \Sigma_\eps$, $\eps =\frac1n$ for each $n\in\NB$. We actually establish that if $|\tau|\leq T_k$ for some $k\in\NB$, then for every $i\in\NB$, $\Phi(\tau) \Sigma_{T_{j}, \eps_{j}}^{s_{j}} \subset \Sigma_{T_i, \eps_i}^{s_i}$ for $j=\max(i+2, k+1)$, from which the intended result follows. Let $u_0 \in \Sigma_{T_{j}, \eps_{j}}^{s_{j}}$, then there exists $A\in 2^{\NB}$ such that $u_0 \in \Sigma_{T_{j}, \eps_{j}}^{s_{j}}(A)$. From Proposition~\ref{prop:almostGWP}, there exists a solution $u(t)$ of $\mathcal{G}$-gKdV for $|t| \leq T_{j}$ satisfying
	\begin{equation*}
		\|u(t)\|_{\FL^{s_{j}, p}} \leq 2 \sqrt{2} A C_0 C_{s_{j+1}} \big(\log \tfrac{T_{j}}{\eps_{j}} \big)^\frac12, \qquad |t| \leq T_{j}.
	\end{equation*}
	Note that $u_\tau(t) = u(\tau +t)$ is a solution of $\mathcal{G}$-gKdV with $u_\tau(0) = u(\tau )=\Phi(\tau)u_0$, which belongs to $C\big([-T_{j-1},T_{j-1}] ; \FL^{s_j,p}(\T) \big) \cap Z^{s_j,\frac12}_{p}(T_{j-1})$, because $k\leq j-1$ and then $|t + \tau| \leq T_{j-1} + T_k \leq T_j$. Since the above estimate holds for $u_\tau(t)$ if $|t| \leq T_{j-1}$, from Lemma~\ref{lm:approx}(b), it follows that
	\begin{equation*}
		\| \Phi_N(t) \Phi(\tau) u_0 \|_{\FL^{s_{j-1},p}} \leq 6 \sqrt{2} A C_0 C_{s_{j+1}} \big(\log\tfrac{T_j}{\eps_j}\big)^\frac12, \qquad |t| \leq T_{j-1},
	\end{equation*}
	for any $N\geq N_0$.
	Since $i\leq j-2$ and $\frac{T_j}{\eps_j}\leq \big( \frac{2T_i}{\eps_i}\big) ^{j/i}$ for $0<\eps\leq 1$, we get that
	\begin{align*}
		\|\Phi_N (t) \Phi(\tau) u_0 \|_{\FL^{s_{i+1},p}} \leq 6\sqrt{2j/i} A C_0 C_{s_{j+1}} \big( \log\tfrac{2 T_i}{\eps_i}\big)^\frac12, \qquad |t| \leq T_{i+1}.
	\end{align*}
	Consequently, by choosing $\tilde{A} \in 2^\NB$ such that $6\sqrt{2j/i} A C_0 C_{s_{j+1}} \leq \tilde{A} C_{s_{i+1}}$ and $N_1(\tilde{A}) \geq N_0$, and applying Lemma~\ref{lm:uniform}(c), we conclude that $\Phi(\tau) u_0 \in \Sigma_{T_i, \eps_i}^{s_i}(\tilde{A})$. The group property of $\Phi (t)$ follows from uniqueness of local solutions in $Z^{s,\frac12}_p(T)$.
	
	Before showing the invariance of $\mu$ under the flow map $\Phi(t)$, we show that $\Phi(t)$ is $\mu$-measurable for every $t\in\R$. It suffices to show the continuity of the map in the topology induced by $\FL^{s_1,p}(\T)$. Fix $t\in\R$ and $u_0\in\Sigma$. Consider a sequence $\{u_{0,k}\}_{k\in\NB} \subset \Sigma$ converging to $u_0$ in $\FL^{s_1,p}(\T)$. Let $j\in\NB$ such that $|t| \leq T_j$. Then, $u_0 \in \Sigma_{T_j, \eps_j}^{s_j}(A)$ for some $\eps = \frac1n$ and some $A$. By Proposition~\ref{prop:almostGWP}, we have
	$$\sup_{|\tau| \leq T_j} \|\Phi(\tau) u_0 \|_{\FL^{s_j,p}} \leq 2 \sqrt{2} A C_0 C_{s_{j+1}}\big( \log \tfrac{T_j}{\eps_j} \big)^\frac12 = : \Lambda.$$
	Let $\tau_0$ be the local time of existence for data of size $2\Lambda$ in $\FL^{s_1,p}(\T)$.
	From the Lipschitz continuity of the solution map, we obtain
	\begin{align*}
		\|\Phi(t) u_0 - \Phi(t)u_{0,k} \|_{\FL^{s_1,p}} \leq C^{[\frac{|t|}{\tau_0}]} \|u_0 - u_{0,k} \|_{\FL^{s_1,p}},
	\end{align*} 
	as long as the right-hand side is bounded by $\Lambda$, which holds for $k$ large enough. Consequently, by taking $k\to\infty$, we conclude that $\Phi(t) u_{0,k} \to \Phi(t) u_0$ in $\FL^{s_1,p}(\T)$.
	
	It remains to show the invariance of the Gibbs measure $\mu$ under the flow $\Phi(t)$ of $\mathcal{G}$-gKdV \eqref{gauged}. Having established the flow property of $\Phi(t)$, it suffices to show that for all $G \in L^1 \big( \FL^{s_1,p}(\T), d\mu \big)$ and $t\in \R$, we have
	\begin{equation}\label{invariance}
	\int_\Sigma G\big( \Phi(t) u \big) d\mu(u) = \int_\Sigma G(u) d \mu(u).
	\end{equation}
	Moreover, it suffices to show \eqref{invariance} for $G$ in a dense subset of $L^1\big( \FL^{s_1,p}(\T), d\mu \big)$. In particular, we choose this set $\mathcal{H}$ as 
	the set of continuous and bounded functions on $\FL^{s_1,p}(\T)$. 
	Fix $G\in\mathcal{H}$, $t\in \R$ and $\kappa>0$. We have the following
	\begin{align*}
	\bigg| \int_\Sigma G\big(\Phi(t) u \big) d\mu(u) - \int_\Sigma G(u) d\mu(u) \bigg| & \leq \bigg|\int_\Sigma G\big(\Phi(t)u \big) d\mu(u) - \int_\Sigma G\big(\Phi(t)u\big) d\mu_N(u)\bigg| \\
	&\quad + \bigg| \int_\Sigma G\big(\Phi(t) u \big) d\mu_N(u) - \int_\Sigma G\big(\Phi_N(t)u\big) d\mu_N(u)  \bigg| \\
	&\quad + \bigg| \int_\Sigma G\big(\Phi_N(t)u \big) d\mu_N(u) - \int_\Sigma G(u) d\mu_N(u) \bigg| \\
	&\quad + \bigg| \int_\Sigma G(u) d\mu_N(u) - \int_\Sigma G(u) d\mu(u) \bigg| \\
	& = \I + \II + \III +\IV .
	\end{align*}
	From Lemma~\ref{lm:F}, we have 
	\begin{align*}
	\int \tilde{G}(u) d\mu_N(u) - \int \tilde{G}(u) d\mu(u) = \int \tilde{G}(u)\Big( \frac{F_N(u)}{\| F_N\|_{L^1(d\rho )}}-\frac{F(u)}{\| F\|_{L^1(d\rho )}}\Big) d\rho (u) \to 0 , \quad N\to \infty
	\end{align*}
	for every bounded measurable function $\tilde{G}$ on $\FL^{s_1,p}(\T)$. Consequently, since $G$ is bounded and continuous and $\Phi(t)$ is measurable, there exists $N_0\in\NB$ such that $\I + \IV < \frac{\kappa}{2}$, for $N\geq N_0$.
	From Proposition~\ref{inv_trunc}, the measure $\mu_N$ is invariant under the flow $\Phi_N(t)$, thus $\III=0$.
	It only remains to estimate $\II$. For $0<\eps \leq \frac12$, consider the set $\Sigma (t,\eps )=\Sigma^{s_2}_{1+|t|,\eps}(1)\subset \FL^{s_3,p}(\T)$. From Lemma~\ref{lm:F}, there exists $N_1\in\NB$ such that $\mu_N(\Sigma (t,\eps)^c) < \mu(\Sigma (t,\eps )^c) + \eps$ for $N\geq N_1$. Since $\mu(\Sigma (t,\eps )^c) < \eps$ by Proposition~\ref{prop:almostGWP}, we see that
	\begin{align*}
	\bigg| \int_{\Sigma \setminus \Sigma (t,\eps)} G\big(\Phi(t) u \big) d\mu_N(u) - \int_{\Sigma\setminus \Sigma (t,\eps)} G\big(\Phi_N(t)u \big) d\mu_N(u) \bigg| 
	\leq 2 \|G\|_{L^\infty} \big( \mu\big(\Sigma (t,\eps)^c\big) + \eps \big) 
	& < \frac\kappa4,
	\end{align*}
	for $N\geq N_1$ and by choosing $\eps \leq \frac{\kappa}{16\|G\|_{L^\infty}}$. In order to estimate the contribution restricted to $\Sigma(t,\eps)$, we want to exploit the continuity of $G$. 
	For $u_0\in \Sigma \cap \Sigma(t,\eps)$, from Proposition~\ref{prop:almostGWP} and uniqueness, we have
	\begin{equation*}
	\|u_0\|_{\FL^{s_2,p}} , \ \|\Phi (s) u_0 \|_{\FL^{s_2,p}} \leq 2\sqrt{2}C_0C_{s_3}\big( \log \tfrac{1+|t|}{\eps}\big) ^{\frac12},\qquad |s|\leq 1+|t|.
	\end{equation*}
In particular, the set $\{ \Phi(t) u_0 : u_0\in \Sigma \cap \Sigma (t,\eps )\}$ is bounded in $\FL^{s_2,p}(\T)$ and thus precompact in $\FL^{s_1,p}(\T )$, which implies that $G$ is uniformly continuous on this set. Next, from Lemma~\ref{lm:approx}(b) we have
	\begin{align*}
	\big\| \Phi (t) u_0 - \PP_{\leq N} \Phi_N(t) u_0 \big\|_{\FL^{s_1,p}} &\leq C(t,\eps ) N^{-(s_2-s_1)}
	\end{align*}
	for any $N$ large enough. Thus, it follows that 
	\begin{align*}
	\big\| \Phi(t) u_0 - \Phi_N(t) u_0 \big\|_{\FL^{s_1,p}} & \leq \big\| \Phi(t) u_0 - \PP_{\leq N}\Phi_N(t) u_0 \big\|_{\FL^{s_1,p}} + \big\| \PP_{>N} \Phi_N(t) u_0 \big\|_{\FL^{s_1,p}} \\
	& \leq C(t,\eps ) N^{-(s_2-s_1)} + N^{-(s_2-s_1)} \| u_0 \|_{\FL^{s_2,p}} \\
	& \leq C(t,\eps ) N^{-(s_2-s_1)} .
	\end{align*}
Hence, there exists $N_2\in \NB$ depending on $t,\eps$ such that
	\begin{equation*}
	\big| G(\Phi(t)u_0) - G(\Phi_N(t)u_0) \big| < \frac{\kappa}{4}
	\end{equation*}
	for $N\geq N_2$ and $u_0\in \Sigma \cap \Sigma (t,\eps )$, and we can estimate the remaining piece of $\II$,
	\begin{align*}
	\bigg| \int_{\Sigma \cap \Sigma(t,\eps)} G\big(\Phi(t)u\big) d\mu_N(u) - \int_{\Sigma \cap \Sigma(t,\eps)} G\big(\Phi_N(t) u \big) d\mu_N(u) \bigg| & \leq \int \frac{\kappa}{4} \,d\mu_N(u)
	 = \frac\kappa4.
	\end{align*}
	Consequently, we have that $\II <\frac{\kappa}{2}$ for $N\geq \max (N_1,N_2)$. Combining all the estimates, we obtain
	\begin{align*}
	\bigg| \int_\Sigma G\big( \Phi(t)u \big) d\mu(u) - \int_\Sigma G\big( u \big) d\mu(u) \bigg| < \kappa.
	\end{align*}
	Since $\kappa$ is arbitrarily small, we obtain \eqref{invariance}, as intended.
\end{proof}

Lastly, we establish the invariance of the Gibbs measure $\mu$ under the flow $\Psi(t)$ of the original gKdV equation \eqref{gkdv}.
\begin{proof}[Proof of Theorem~\ref{th:invariance_gkdv}]
	Let $\Sigma$ be the subset of $\bigcap_{s<1 - \frac1p} \FL^{s,p}(\T)$ constructed in Theorem~\ref{th:invariance} and denote by $T(y)$, for $y\in \T$, the spatial translation operator $f(x)\mapsto f(x - y)$. Note that $\Sigma$ is invariant under $T(y)$. Consequently, we can establish the global-in-time dynamics on $\Sigma$ for the gKdV equation \eqref{gkdv} with the solution map $\Psi(t)$ satisfying the flow property as \eqref{flowproperty}; see Appendix~\ref{ap:gauge} for the definition of $\Psi (t)$ and the proof of the group property of it.

	It remains to prove the invariance of the Gibbs measure \eqref{intro_invariance}.\footnote{We would like to thank Terence Tao and Rowan Killip for suggesting this argument.}
	Let $\haar$ denote the Haar measure on $\T$. Fix $A\subset \Sigma$ and $t\in\R$. Using the invariance of $\mu$ under $T(y)$,\footnote{To see this, we first observe that
\[ T(y)\Big[ \sum _{n\in \mathbb{Z}_*}\frac{g_n(\omega )}{|n|}e^{inx}\Big] = \sum _{n\in \mathbb{Z}_*}\frac{e^{-iny}g_n(\omega )}{|n|}e^{inx}.\]
Then, from the invariance of complex Gaussians under rotations, we see that the Gaussian measure $\rho$ is invariant under $T(y)$.
This implies the invariance of the Gibbs measure $\mu$ under $T(y)$, since the density $F(u)=\1_{\{\|u\|_{L^2} \leq R\}} e^{- \frac{1}{k+1} \int_\T u^{k+1} dx }$ is invariant under $T(y)$.} the fact that $T(y)$ and $\Psi(t)$ commute and Fubini's Theorem, we have
\begin{align*}
	\mu\big( \Psi(-t)A \big) & = \int_\T \mu \big( T(-y) \Psi(-t) A\big) \, d\haar(y) \\
	& = \int_\T \int_\Sigma \1_A\big( T(y) \Psi(t)u_0\big) \, d\mu(u_0) \, d\haar (y)\\
	& = \int_\Sigma \int_\T \1_A \bigg[ T\Big( y \pm k\int_0^t \PP_0(\Phi(t') u_0)^{k-1} dt' \Big) \Phi(t) u_0 \bigg] \, d\haar(y) \, d\mu(u_0).
\end{align*}
From the translation invariance of $\haar$, Fubini's Theorem and the fact that $\Phi(t)$ commutes with $T(y)$, we have that
\begin{align*}
	\mu\big( \Psi(-t)A \big) & = \int_\Sigma \int_\T \1_A \big( T(y) \Phi(t) u_0 \big) \, d\haar(y) \, d\mu(u_0)\\
	& = \int_\T \mu\big( T(-y) \Phi(-t) A\big) \, d\haar(y) .
\end{align*}
Since $\mu$ is invariant under $T(y)$ and under the flow map $\Phi(t)$ of \eqref{gauged} from Theorem~\ref{th:invariance}, we get $\mu\big( \Psi(-t)A \big) = \mu\big(\Phi(-t) A \big) = \mu(A)$, as intended.
\end{proof}

\begin{appendix}

	\section{Gauge transformation and solution map for gKdV}\label{ap:gauge}
	We start by establishing continuity of the (inverse) gauge transformation.
	\begin{lemma}\label{lem:cont-G}
		The (inverse) gauge transformation in \eqref{gauge} is a continuous map on \\
		$C\big([-T,T]; \FL^{s,p}(\T)\big)$ given that $1\leq p < \infty$ and $s> 1 - \frac1p - \frac{1}{k-1}$.
	\end{lemma}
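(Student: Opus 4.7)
The plan is to factor $\mathcal{G}_{0,t}$ as the composition of (a) the functional $u \mapsto y_u \in C\big([-T,T];\R\big)$ given by $y_u(t) := \mp k \int_0^t \PP_0\big(u^{k-1}(t')\big) \, dt'$, and (b) the space-time spatial translation $(u, y) \mapsto \big[(t,x) \mapsto u\big(t, x - y(t)\big)\big]$. I then verify the continuity of each factor and combine them with care about uniformity in $t$.

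For (a), the first task is to make sense of $\PP_0(u^{k-1})$ for $u \in \FL^{s,p}(\T)$. When $k \geq 3$, Hausdorff--Young yields $\FL^{0, (k-1)'}(\T) \embeds L^{k-1}(\T)$ with $(k-1)'=\frac{k-1}{k-2}$, and a weighted H\"older estimate on the Fourier side with weight $\jb{n}^{-s}$ upgrades this to $\FL^{s,p}(\T) \embeds L^{k-1}(\T)$ precisely when $s > 1 - \frac{1}{p} - \frac{1}{k-1}$ (the case $k=2$ requires no embedding). Telescoping $u^{k-1} - v^{k-1}$ then gives
\[
\big|\PP_0\big(u^{k-1}\big) - \PP_0\big(v^{k-1}\big)\big| \les \|u - v\|_{\FL^{s,p}}\big(\|u\|_{\FL^{s,p}} + \|v\|_{\FL^{s,p}}\big)^{k-2},
\]
so that integration in $t$ yields $\sup_{|t|\leq T}|y_{u_n}(t) - y_u(t)| \to 0$ whenever $u_n \to u$ in $C\big([-T,T]; \FL^{s,p}(\T)\big)$.

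For (b), since $\widehat{f(\cdot - y)}(n) = e^{-iny}\ft{f}(n)$, translation by a fixed $y$ is a linear isometry on $\FL^{s,p}(\T)$, which allows the splitting
\[
\big\|u_n(t, \cdot - y_n(t)) - u(t, \cdot - y(t))\big\|_{\FL^{s,p}} \leq \|u_n(t) - u(t)\|_{\FL^{s,p}} + \big\|u(t, \cdot - y_n(t)) - u(t, \cdot - y(t))\big\|_{\FL^{s,p}}.
\]
The first term on the right tends to zero uniformly in $t \in [-T,T]$ by hypothesis. Pointwise-in-$t$ continuity of the second term follows from dominated convergence in $\ell^p_n$ (using $p < \infty$): the quantity $\big|e^{-iny_n(t)} - e^{-iny(t)}\big|\jb{n}^s\big|\ft{u}(t,n)\big|$ is dominated by $2\jb{n}^s|\ft{u}(t,n)| \in \ell^p_n$ and tends to zero for each fixed $n$.

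The main obstacle is upgrading this pointwise-in-$t$ convergence to the required uniform convergence, which I plan to handle by exploiting compactness of the set $K := \{u(t) : t \in [-T,T]\} \subset \FL^{s,p}(\T)$ (it is the continuous image of a compact interval). Given $\eps > 0$, choose $N$ large enough that $\sup_{v \in K} \|v - \PP_{\leq N}v\|_{\FL^{s,p}} < \eps$; then the high-frequency contribution to the second term above is bounded by $2\eps$ uniformly in $t$, while on $|n|\leq N$ the elementary bound $|e^{-iny_n(t)} - e^{-iny(t)}| \leq N\sup_{|t|\leq T}|y_{u_n}(t) - y_u(t)|$ forces the low-frequency contribution to vanish uniformly by step (a). The inverse gauge $\mathcal{G}_{0,t}^{-1}$ has the same structure with opposite sign and is treated verbatim.
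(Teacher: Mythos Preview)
Your proof is correct and follows essentially the same route as the paper: both arguments split off the isometric piece $\|u_n(t)-u(t)\|_{\FL^{s,p}}$, then handle the remaining phase difference $(e^{-iny_{u_n}(t)}-e^{-iny_u(t)})\ft{u}(t,n)$ by a high/low frequency decomposition, controlling the low part via the mean-value bound $|e^{-iny_n}-e^{-iny}|\leq |n|\,|y_n-y|$ together with the embedding $\FL^{s,p}\hookrightarrow L^{k-1}$, and the high part by the tail of $u(t)$ in $\FL^{s,p}$. Your write-up is in fact slightly more careful than the paper's, which fixes $t$ at the outset and leaves implicit the compactness argument you spell out to obtain uniformity of the high-frequency tail over $t\in[-T,T]$.
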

	\begin{proof}
		Let $u$ be any function in $C\big([-T,T]; \FL^{s,p}(\T)\big)$. Consider a sequence $\{u_m\}_{m\in\NB}$ in $C\big([-T,T]; \FL^{s,p}(\T)\big)$ converging to $u$ and fix $t\in[-T,T]$. Then, 
		\begin{align*}
		&\big\|\mathcal{G}_{0,t}\big(u(t)\big) - \mathcal{G}_{0,t}\big(u_m(t)\big) \big\|_{\FL^{s,p}} \\
		& \phantom{XXX}= \big\| \jb{n}^s\big(e^{in k \int_0^t \PP_0(u^{k-1} (t')) dt'} \ft{u}(t,n) - e^{in k\int_0^t \PP_0(u_m^{k-1} (t')) dt'} \ft{u}_m(t,n) \big) \big\|_{\l^p_n} \\
		&\phantom{XXX} \leq 2 \big\| \1_{|n| > N} \jb{n}^s \ft{u}(t,n) \big\|_{\l^p_n} + \| u(t) - u_m(t) \|_{\FL^{s,p}} \\
		&\phantom{XXXXXX} + \| u(t) \|_{\FL^{s,p}} \big\| \1_{|n| \leq N} \big( e^{in k\int_0^t \PP_0(u^{k-1} (t')) dt'}- e^{ink \int_0^t \PP_0(u_m^{k-1} (t')) dt'} \big)\big\|_{\l^\infty_n}.
		\end{align*}
		The first two terms on the right-hand side of the estimate converge to zero as $N\to\infty$ and $m\to\infty$, thus it only remains to consider the last one. Using the mean value theorem, we have
		\begin{align*}
		\big\| \1_{|n| \leq N} \big( e^{ink\int_0^t \PP_0(u^{k-1} (t')) dt'}- e^{ink\int_0^t \PP_0(u_m^{k-1} (t')) dt'} \big)\big\|_{\l^\infty_n} & \leq |t| N \| u^{k-1} - u_m^{k-1} \|_{C_{|t|}L^1}.
		\end{align*}
		Since $\FL^{s,p}(\T) \embeds L^{k-1}(\T)$ for $s>1 - \frac1p - \frac{1}{k-1}$, then the above quantity converges to zero for each fixed $N$, establishing the continuity of $\mathcal{G}_{0,t}$. An analogous proof works for $\mathcal{G}_{0,t}^{-1}$.
	\end{proof}
	
	Following the argument in \cite{GH}, we establish the following result for the (inverse) gauge transformation in \eqref{gauge}.
	\begin{proposition}
		Let $1 \leq p < \infty$ and $s> 1 - \frac1p - \frac{1}{k-1}$. Then, the (inverse) gauge transformation in \eqref{gauge} is not uniformly continuous on arbitrarily small balls of $C\big([-T,T]; \FL^{s,p}(\T)\big)$ centered at the origin.
	\end{proposition}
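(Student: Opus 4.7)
The gauge $\mathcal{G}_{0,t}$ acts as a time-dependent spatial translation: $\mathcal{G}_{0,t}(u)(t,x)=u(t,x-y_u(t))$ with $y_u(t):=\mp k\int_0^t \PP_0\big(u^{k-1}(t')\big)\,dt'$. The plan is to exhibit, for any $\eps>0$ however small, sequences $u_m,v_m$ in the $\eps$-ball of $C([-T,T];\FL^{s,p}(\T))$ with $\|u_m-v_m\|\to 0$ while $\|\mathcal{G}_{0,t}(u_m)-\mathcal{G}_{0,t}(v_m)\|\ges \eps$. The underlying mechanism is that a translation difference of order $1/N$ creates an $\mathcal{O}(1)$ Fourier phase rotation at modes $\pm N$, so a test function whose $\FL^{s,p}$-mass of order $\eps$ is concentrated at $\pm N$ essentially retains that mass in the gauge difference.

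I would take stationary test functions $u_m(t,x)\equiv U_m(x)$, $v_m(t,x)\equiv V_m(x)$, where
\[ V_m(x)=\eta_m\bigl(e^{iN_mx}+e^{-iN_mx}\bigr),\qquad U_m=V_m+\dl_m\psi,\]
with $\psi\in C^\infty(\T)$ a fixed mean-zero function satisfying $\PP_0(\psi^{k-1})\neq 0$ (readily arranged for $k\geq 3$; e.g.~$\psi(x)=\cos x+\cos 2x$ when $k=4$), $N_m\to\infty$, and $\eta_m\sim \eps\jb{N_m}^{-s}$ normalized so that $\|V_m\|_{\FL^{s,p}}\leq \eps/2$. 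Since $V_m^j$ is Fourier-supported on multiples of $N_m$ while $\psi^{k-1-j}$ has fixed bounded Fourier support, for $N_m$ larger than the support of $\psi^{k-1}$ the mean $\PP_0\big(V_m^j(\dl_m\psi)^{k-1-j}\big)$ comes only from the zero-frequency piece of $V_m^j$ (which is nonzero only when $j$ is even). Expanding $U_m^{k-1}-V_m^{k-1}$ in this way yields
\[ \PP_0(U_m^{k-1})-\PP_0(V_m^{k-1})=\dl_m^{k-1}\PP_0(\psi^{k-1})+o(\dl_m^{k-1}),\]
once the higher-order corrections in $\eta_m$ are verified to be subleading (for $k=4$ the only candidate term vanishes because $\PP_0(\psi)=0$; for $k\geq 5$ the condition reduces to $\eta_m\ll\dl_m$, i.e.\ $s>\tfrac{1}{k-1}$, which is implied by $s>1-\tfrac1p-\tfrac{1}{k-1}$ when $p\geq 2$, while in the remaining edge cases one instead balances the $\eta_m^2\dl_m^{k-3}$ term and adjusts the scaling of $\dl_m$). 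Choosing $\dl_m:=(kT|\PP_0(\psi^{k-1})|N_m)^{-1/(k-1)}$ then gives $y_{u_m}(T)-y_{v_m}(T)=\mp N_m^{-1}+o(N_m^{-1})$, while $\|u_m-v_m\|_{C([-T,T];\FL^{s,p})}=\dl_m\|\psi\|_{\FL^{s,p}}\to 0$.

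Finally, writing $T_a f:=f(\cdot-a)$ and decomposing
\[ \mathcal{G}_{0,T}(u_m)-\mathcal{G}_{0,T}(v_m)=T_{y_{u_m}(T)}(U_m-V_m)+\bigl(T_{y_{u_m}(T)}-T_{y_{v_m}(T)}\bigr)V_m,\]
the first summand has $\FL^{s,p}$-norm $\|U_m-V_m\|\to 0$, while for the second, the Fourier multiplier at $n=\pm N_m$ equals $e^{\mp i}-1$ of modulus $2\sin\tfrac12$, giving a lower bound $c|\eta_m|\jb{N_m}^s\ges \eps$. Taking the supremum over $t\in[-T,T]$ produces the claimed failure of uniform continuity, and the same construction (with the sign flipped) applies to $\mathcal{G}_{0,t}^{-1}$. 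The principal obstacle is the careful bookkeeping in the mean-frequency expansion of $U_m^{k-1}-V_m^{k-1}$: identifying which $\eta_m^j\dl_m^{k-1-j}$ combination dominates and tuning the scaling of $\dl_m$ so that this leading term remains dominant while $\|u_m-v_m\|$ still vanishes and both functions stay in $B_\eps$.
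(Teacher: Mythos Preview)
Your approach is essentially the same as the paper's: both use a high-frequency carrier $RN^{-s}\,2\cos(Nx)$ together with a low-frequency perturbation of size $\sim N^{-1/(k-1)}$ to produce a translation difference of order $N^{-1}$, which then yields an order-one phase rotation at the $\pm N$ modes. Your final lower-bound step---splitting $\mathcal{G}_{0,T}(u_m)-\mathcal{G}_{0,T}(v_m)$ into $T_{y_{u_m}}(U_m-V_m)$ and $(T_{y_{u_m}}-T_{y_{v_m}})V_m$ and reading off the Fourier multiplier---is in fact cleaner than the paper's rather compressed ``mean value theorem'' line.

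The one substantive difference is the choice of perturbation. The paper takes the perturbation to be the constant $2N^{-1/(k-1)}$ when $k$ is even and $2N^{-1/(k-1)}\cos x$ when $k$ is odd, i.e.\ essentially $\psi=1$ or $\psi=2\cos x$. With either choice one checks that \emph{every} nonzero term in the binomial expansion of $\PP_0(u_{N,1}^{k-1})-\PP_0(u_{N,2}^{k-1})$ is positive, so the sum is bounded below by the pure-perturbation term $\sim N^{-1}$ with no constraint on $s$ whatsoever. This is what makes the paper's argument cover the full range $s>1-\tfrac1p-\tfrac{1}{k-1}$ uniformly.

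Your insistence on a \emph{mean-zero} $\psi$ is the source of the ``principal obstacle'' you flag. It is unnecessary (the proposition is not restricted to mean-zero functions), and for even $k$ it forces you into odd powers $\PP_0(\psi^{k-1-j})$ whose signs you do not control, so the competing cross terms could in principle cancel the leading one when $s\leq\tfrac{1}{k-1}$. Your suggested fix of ``balancing the $\eta_m^2\dl_m^{k-3}$ term'' does not obviously close this gap, since for $k\geq 7$ that term cannot simultaneously dominate both its neighbors. The simplest repair is just to drop the mean-zero constraint and take $\psi=1$ for $k$ even and $\psi=2\cos x$ for $k$ odd, which is exactly the paper's choice; then all cross terms are nonnegative and your argument goes through for the entire stated range.
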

	\begin{proof}
		Let $R>0$ and $N\in\NB$. Define $\{u_{N,j}\}_{N\in\NB}$ for $j=1,2$ as follows
		\begin{align*}
		u_{N,1} (t,x) &= RN^{-s} (e^{iNx} + e^{-iNx}) + N^{-\frac{1}{k-1}} (e^{iMx} + e^{-iMx}), \\
		u_{N,2} (t,x) &= RN^{-s} (e^{iNx} + e^{-iNx}),
		\end{align*}
		with $M=0$ for $k$ even, and $M=1$ for $k$ odd. Note that 
		\begin{align*}
		\|u_{N,1}\|_{C_T\FL^{s,p}} \les R, 
		\end{align*}
		for $N$ large enough, and $\|u_{N,2}\|_{C_T\FL^{s,p}} \sim R$. Moreover,
		\begin{align*}
		\|u_{N,1} - u_{N,2}\|_{C_T \FL^{s,p}} \sim N^{-\frac{1}{k-1}} \to 0, 
		\end{align*}
		as $N\to\infty$.
		Using the mean value theorem, we obtain
		\begin{align*}
		\|\mathcal{G}_{0,t} (u_{N,1}) - \mathcal{G}_{0,t}(u_{N,2}) \|_{C_T \FL^{s,p}} & \geq TN \Big| \int_\T \big( u^{k-1}_{N,1}(x) - u^{k-1}_{N,2}(x) \big) \, dx \Big|.
		\end{align*}
		Calculating $\int_\T (u^{k-1}_{N,1} - u^{k-1}_{N,2})\, dx$, we have
		\begin{align*}
		 \sim \sum_{\substack{1\leq j \leq k-1\\0\leq l \leq k-1-j \\ 0 \leq m \leq j}} {k-1 \choose j} 
		 N^{-s (k-1-j) - \frac{j}{k-1}} \int_\T e^{iNx(k-j-2l-1) + iMx(j-2m)}, 
		\end{align*}
		thus the nonzero contributions correspond to the choices of indices satisfying $k-1 - j=2l$ and $M(j-2m)=0$ , since $N\gg M$.
		Consequently, we see that the quantity is dominated by the contribution at $j=k-1$, therefore
		\begin{align*}
		\|\mathcal{G}_{0,t} (u_{N,1}) - \mathcal{G}_{0,t}(u_{N,2}) \|_{C_T \FL^{s,p}} & \ges 1, 
		\end{align*}
		which does not decay as $N\to\infty$.
	\end{proof}
	
	We now focus on the solution map of gKdV \eqref{gkdv}. We can define the map $\Psi(s,t)$ for $t,s\in \R$ as
	\begin{align*}
		\Psi(s,t) u_0 = \big[ \Phi(t-s) u_0\big] \bigg( x\pm k\int_s^t \PP_0 \big( \Phi(t'-s) u_0 \big)^{k-1} \, dt' \bigg) ,
	\end{align*}
	which is a solution of gKdV \eqref{gkdv} at time $t$, with initial data $u_0$ at time $s$. Since $\Psi(s,t) = \Psi(0,t-s)$, we can denote the solution map of gKdV \eqref{gkdv} at time $t$ as $\Psi(t) := \Psi(0,t)$. The following lemma establishes that the solution map $\Psi(t)$ satisfies the group property.
	
	\begin{lemma}
		For any $t,s\in\R$ we have that $\Psi(t+s) = \Psi(t)\Psi(s)$.
	\end{lemma}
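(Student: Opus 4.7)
The plan is a direct computation that unravels the definition of $\Psi$, relying on three elementary ingredients: (i) the group property $\Phi(t+s)=\Phi(t)\Phi(s)$ of the flow of $\mathcal{G}$-gKdV, which was already established in Theorem~\ref{th:invariance}; (ii) the spatial translation invariance of $\Phi(t)$, i.e.\ $\Phi(t)\circ T(y)=T(y)\circ \Phi(t)$ for every $y\in\T$; and (iii) the fact that the mean $\PP_0$ is itself invariant under spatial translations. The second fact is not explicitly stated earlier, but it follows from a one-line uniqueness argument: since the gauged nonlinearity $\pm\dx\bigl(u^k-k\PP_0(u^{k-1})u\bigr)$ commutes with $T(y)$, both $T(y)\Phi(t)u_0$ and $\Phi(t)T(y)u_0$ are solutions of $\mathcal{G}$-gKdV in $Z^{s,\frac12}_p(T)\cap X^{s,0}_{p,1}(T)$ with the same initial data $T(y)u_0$, and local uniqueness forces them to coincide.

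With these facts in hand, introduce the shorthand
\[
C_s(u_0)\;:=\;\pm k\int_0^s \PP_0\bigl(\Phi(t')u_0\bigr)^{k-1}\,dt',
\]
so that the definition of $\Psi$ becomes $\Psi(s)u_0=T\bigl(-C_s(u_0)\bigr)\Phi(s)u_0$. Set $v_0:=\Psi(s)u_0$. Using (ii) and then (i),
\[
\Phi(t')v_0=\Phi(t')\,T\bigl(-C_s(u_0)\bigr)\Phi(s)u_0=T\bigl(-C_s(u_0)\bigr)\Phi(t'+s)u_0,\qquad t'\in\R.
\]
Applying (iii) to this identity yields $\PP_0(\Phi(t')v_0)^{k-1}=\PP_0(\Phi(t'+s)u_0)^{k-1}$, and a change of variables $\tau=t'+s$ in the integral defining $C_t(v_0)$ gives the cocycle identity
\[
C_t(v_0)+C_s(u_0)\;=\;C_{t+s}(u_0).
\]

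Combining the displays above with the composition rule $T(a)T(b)=T(a+b)$,
\[
\Psi(t)v_0=T\bigl(-C_t(v_0)\bigr)\Phi(t)v_0=T\bigl(-C_t(v_0)-C_s(u_0)\bigr)\Phi(t+s)u_0=T\bigl(-C_{t+s}(u_0)\bigr)\Phi(t+s)u_0=\Psi(t+s)u_0,
\]
which is the desired group property. There is no genuine obstacle in the argument; the only point that deserves explicit verification is the translation invariance of $\Phi(t)$, but as noted above this is just a uniqueness statement in the local well-posedness class of Theorem~\ref{th:lwp}.
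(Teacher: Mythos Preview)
Your proof is correct and takes essentially the same approach as the paper: both arguments are direct computations that combine the group property of $\Phi$, the commutation of $\Phi(t)$ with spatial translations, and the translation invariance of the mean $\PP_0(\cdot)^{k-1}$, then use a change of variables in the integral to collapse the two shift terms into one. Your version is somewhat more explicit—the paper leaves the translation invariance of $\Phi$ implicit in its one-line passage from $\Psi(s)\Psi(t)u_0$ to the displayed formula, whereas you isolate it as ingredient (ii) and justify it via uniqueness—and your packaging of the argument through the cocycle identity $C_t(v_0)+C_s(u_0)=C_{t+s}(u_0)$ is a clean way to organize the same computation.
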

	\begin{proof}
		Let $u_0 \in \FL^{s,p}(\T)$ and $t,s\in\R$. From the definition of $\Psi$, we have
		\begin{align*}
			\Psi(s+t) u_0 = \big[ \Phi(s+t) u_0\big] \bigg( x \pm k \int_0^{s+t} \PP_0 \big(\Phi(t')u_0\big)^{k-1}  \, dt' \bigg).
		\end{align*}
		Using the group property of $\Phi$ and a change of variables, we obtain
		\begin{align*}
			\Psi(s)\Psi(t) u_0& = \Psi(s) \bigg[ \big[\Phi(t) u_0\big] \bigg( x \pm k\int_0^t \PP_0 \big(\Phi(t') u_0 \big)^{k-1} \, dt' \bigg) \bigg] \\
			& = \big[ \Phi(s+t) u_0\big]  \bigg(  x \pm k \int_0^t \PP_0\big(\Phi(t')u_0 \big)^{k-1} \, dt' \pm k \int_0^s \PP_0 \big(\Phi(t+t') u_0 \big)^{k-1} \, dt' \bigg)\\
			& = \big[ \Phi(s+t) u_0\big] \bigg( x \pm k \int_0^{t+s} \PP_0 \big(\Phi(t') u_0\big)^{k-1} \, dt' \bigg),
		\end{align*}
		which is equal to $\Psi(s+t)u_0$, establishing the group property of the map.
	\end{proof}

	\section{Lifting the mean zero condition}\label{ap:mean}
	In this section, we clarify how to construct the solution map for \eqref{gauged} without restricting to mean zero initial data. 
	We first consider the set $\FL^{s,p}_\al(\T)$ of functions in $\FL^{s,p}(\T)$ with prescribed mean $\al \in \R$, and define the translation $\tau_\al[u] := u - \al$. For $u_0 \in \FL^{s,p}_\al(\T)$, $v_0 = \tau_\al[u_0] \in \FL^{s,p}_0(\T)$ and we consider the following Cauchy problem
	\begin{equation}\label{gauged-alpha}
		\left\{ \begin{aligned} &\dt v + \dx^3 v = k \PP \big( (v+\alpha )^{k-1}\big) \dx v,\\
			&v|_{t=0}=v_0. \end{aligned} \right.
	\end{equation}
	Since conservation of mean still holds for solutions of \eqref{gauged-alpha}, $v$ has mean zero and we can apply the nonlinear estimates in Proposition~\ref{prop:nonlinear} to $v$.%
	\footnote{Precisely, we need the estimates of $\PP (v^l)\dx v$, $l=k-1,k-2,\dots ,1$, for mean zero $v$. Proposition~\ref{prop:nonlinear} treats the case $l\geq 3$, while the $l=2$ case can be found in \cite{Ch21}, Proposition~5, which holds for $2<p<\infty$ and $s>\max (\frac12, \frac34 -\frac1p)$. For $l=1$, by adapting the proof of Proposition~\ref{prop:nonlinear} we can easily see that the required estimate is available at least for $2<p<\infty$ and $s>\frac12$. In fact, there are only two frequencies $n_0$, $n_1$, and we can treat two possibilities $|n_0|\gg |n_1|$ and $|n_0|\sim |n_1|$ by following the argument for {\bf Case~1.1} and {\bf Case~2.1}, respectively. Note that the most restrictive condition on $s$ is that for $l=k-1$, which is the same as the one imposed in Theorem~\ref{th:lwp} for the mean-zero case.}
	Following the proof of Theorem~\ref{th:lwp}, we prove that \eqref{gauged-alpha} is locally well-posed in $\FL^{s,p}_0(\T)$, with local time of existence $T\sim (1+\| v_0\|_{\FL^{s,p}}+|\alpha |)^{-\frac{k-1}{\theta}}$. Let $\Phi^\al_0(t)$ be the obtained solution map. We can now define the local-in-time flow $\Phi_\al(t)$ of the gauged gKdV equation \eqref{gauged} on $\FL^{s,p}_\al(\T)$ as
	$$\Phi _\alpha (t)=\tau _{-\alpha}\circ \Phi _0^\alpha (t)\circ \tau_\alpha, $$
	and the flow $\cj{\Phi}(t)$ of \eqref{gauged} on $\FL^{s,p}(\T)$ as 
	$$\cj{\Phi}(t) = \Phi_\alpha (t)\quad \text{on $\FL^{s,p}_\alpha$, for each $\alpha \in \mathbb{R}$.} $$
	Similarly to the mean zero case, this solution map $\cj{\Phi}(t)$ is still locally Lipschitz continuous.
	\begin{proposition}\label{prop:lwp}
		
		For any $R>0$ there exists $T\sim (1+R)^{-\frac{k-1}{\theta}}$ such that the flow $\overline{\Phi}(t)$ can be defined on $B_R:=\{ u_0\in \FL^{s,p}:\| u_0\|_{\FL^{s,p}}\leq R\}$ for $|t|\leq T$.
		Moreover, for any $u_0,\wt{u}_0\in B_R$ we have
		\[ \| \overline{\Phi}(\cdot )u_0\|_{Z^{s,\frac12}_p(T)}\lesssim \| u_0\|_{\FL^{s,p}},\qquad \| \overline{\Phi}(\cdot )u_0-\overline{\Phi}(\cdot )\wt{u}_0\|_{Z^{s,\frac12}_p(T)}\lesssim \| u_0-\wt{u}_0\|_{\FL^{s,p}},\]
		where the implicit constants are independent of $R$ and the means of $u_0,\wt{u}_0$.
	\end{proposition}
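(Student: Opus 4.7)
The strategy is a contraction mapping argument for the Duhamel formulation of \eqref{gauged-alpha}, adapting the proof of Theorem~\ref{th:lwp} to keep careful track of the dependence on the mean. Given $u_0\in B_R$, I decompose $u_0=v_0+\alpha$ with $\alpha=\PP_0 u_0\in\R$ and $v_0=\PP u_0\in\FL^{s,p}_0(\T)$, so that $|\alpha|,\|v_0\|_{\FL^{s,p}}\leq C\|u_0\|_{\FL^{s,p}}\leq CR$. Expanding the nonlinearity,
\[ k\PP\bigl((v+\alpha)^{k-1}\bigr)\dx v=k\sum_{l=1}^{k-1}\binom{k-1}{l}\alpha^{k-1-l}\PP(v^l)\dx v, \]
where the $l=0$ term drops since $\PP$ annihilates constants. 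The key multilinear estimate
\[ \bigl\|\PP(v^l)\dx v\bigr\|_{Z^{s,-\frac12}_p(T)}\les T^\theta\|v\|^{l+1}_{Z^{s,\frac12}_p(T)},\qquad 1\leq l\leq k-1, \]
for mean-zero $v$ follows from Proposition~\ref{prop:nonlinear} when $l\geq 3$, from Proposition~5 of \cite{Ch21} when $l=2$, and from a direct adaptation of \textbf{Cases~1.1}--\textbf{2.1} of Section~\ref{sec:lwp} when $l=1$.

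Combining these with Lemma~\ref{lm:linear} and setting
\[ \Gamma_{v_0,\alpha}(v)(t):=S(t)v_0+k\int_0^t S(t-t')\sum_{l=1}^{k-1}\binom{k-1}{l}\alpha^{k-1-l}\PP(v^l)\dx v\,dt', \]
I obtain
\[ \|\Gamma_{v_0,\alpha}(v)\|_{Z^{s,\frac12}_p(T)}\leq C_1\|v_0\|_{\FL^{s,p}}+C_2T^\theta\sum_{l=1}^{k-1}|\alpha|^{k-1-l}\|v\|^{l+1}_{Z^{s,\frac12}_p(T)}, \]
with a parallel Lipschitz bound. On the ball $\{\|v\|_{Z^{s,\frac12}_p(T)}\leq 2C_1\|v_0\|_{\FL^{s,p}}\}$, each term is controlled by $|\alpha|^{k-1-l}R^{l}\|v\|\les R^{k-1}\|v\|$, so the whole sum is dominated by $C_3T^\theta(1+R)^{k-1}\|v\|_{Z^{s,\frac12}_p(T)}$. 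Choosing $T\sim(1+R)^{-(k-1)/\theta}$ with a sufficiently small implicit constant makes $\Gamma_{v_0,\alpha}$ a contraction on this ball, producing a unique fixed point $v$ with $\|v\|_{Z^{s,\frac12}_p(T)}\les\|v_0\|_{\FL^{s,p}}$. Setting $\overline\Phi(t)u_0:=v(t)+\alpha$ recovers the flow introduced before the proposition, and the first claimed bound follows from $\|\overline\Phi(\cdot)u_0\|_{Z^{s,\frac12}_p(T)}\les\|v\|_{Z^{s,\frac12}_p(T)}+|\alpha|\les\|u_0\|_{\FL^{s,p}}$, where the contribution of the constant $\alpha$ is controlled by extending it to $\R\times\T$ via a smooth time cutoff and noting that its $Z^{s,\frac12}_p$-norm is $\les|\alpha|$ for $T\leq 1$.

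For the Lipschitz continuity, given $u_0,\wt u_0\in B_R$ with decompositions $(v_0,\alpha),(\wt v_0,\wt\alpha)$ and associated fixed points $v,\wt v$, I subtract the two Duhamel formulations and split each nonlinear difference as
\[ \alpha^{k-1-l}\PP(v^l)\dx v-\wt\alpha^{k-1-l}\PP(\wt v^l)\dx\wt v=(\alpha^{k-1-l}-\wt\alpha^{k-1-l})\PP(v^l)\dx v+\wt\alpha^{k-1-l}\bigl[\PP(v^l)\dx v-\PP(\wt v^l)\dx\wt v\bigr]. \]
Using $|\alpha^{k-1-l}-\wt\alpha^{k-1-l}|\les R^{k-2-l}|\alpha-\wt\alpha|$ together with the same multilinear estimates, both pieces contribute at most $C_4T^\theta(1+R)^{k-1}\bigl(\|v-\wt v\|_{Z^{s,\frac12}_p(T)}+|\alpha-\wt\alpha|\bigr)$; the same choice of $T$ then absorbs the $\|v-\wt v\|$ term on the left and yields $\|v-\wt v\|_{Z^{s,\frac12}_p(T)}\les\|v_0-\wt v_0\|_{\FL^{s,p}}+|\alpha-\wt\alpha|\les\|u_0-\wt u_0\|_{\FL^{s,p}}$. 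Adding $|\alpha-\wt\alpha|\leq\|u_0-\wt u_0\|_{\FL^{s,p}}$ gives the stated Lipschitz bound on $\overline\Phi$. The main subtlety, and the reason for the $(1+R)^{-(k-1)/\theta}$ scaling of $T$, is precisely that the lower-power factors $|\alpha|^{k-1-l}\|v\|^{l+1}$ force a contraction radius depending polynomially on the total size $1+R$ rather than on $\|v_0\|_{\FL^{s,p}}$ alone; this ensures that all constants in the final estimates are uniform in $R$, $\alpha$ and $\wt\alpha$.
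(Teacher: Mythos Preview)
Your proof is correct and follows essentially the same route as the paper's: both reduce to the mean-zero equation \eqref{gauged-alpha} with parameter $\alpha$, expand the nonlinearity into the terms $\alpha^{k-1-l}\PP(v^l)\dx v$ (the paper does this implicitly via the footnote preceding the proposition, which cites exactly the same sources you do for $l=1,2$), run a contraction to get the first bound, and then estimate the difference $w=v-\wt v$ of two solutions with distinct means by splitting each nonlinear term into an $(\alpha^{k-1-l}-\wt\alpha^{k-1-l})$-piece and a multilinear-difference piece. The paper's write-up is terser---it packages the contraction step as ``from the local well-posedness of \eqref{gauged-alpha}'' and states the key inequality for $w$ directly---but the underlying argument is the same.
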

	
	\begin{proof}
		The flow $\cj{\Phi}(t)$ is well-defined on $B(R)$ since each flow $\Phi_\al(t)$ is defined on $B_R \cap \FL^{s,p}_\al(\T)$ for $|t| \leq T \sim ( 1 + R +|\alpha |)^{-\frac{k-1}{\theta}}$ and $|\al| = |\PP_0 u_0| \leq \|u_0\|_{\FL^{s,p}} \leq R$ on $B_R \cap \FL^{s,p}_\al(\T)$.
		From the local theory in $\FL^{s,p}_0(\T)$, we have
		\[ \| \Phi _0^\alpha (\cdot )\tau_\alpha u_0\| _{Z^{s,\frac12}_p(T)}\lesssim \| \tau_\alpha u_0\|_{\FL^{s,p}}\leq \| u_0\|_{\FL^{s,p}}\]
		for $u_0\in \FL^{s,p}_\alpha $, and hence 
		\[ \| \overline{\Phi}(\cdot )u_0\| _{Z^{s,\frac12}_p(T)}\leq \| \Phi _0^\alpha (\cdot )\tau_{\alpha}u_0\| _{Z^{s,\frac12}_p(T)}+C|\alpha |\lesssim \| u_0\|_{\FL^{s,p}},\]
		where the implicit constants are uniform in $\alpha$.
		
		To prove the Lipschitz bound, let $u_0,\wt{u}_0\in B_R$ be two initial data with means $\alpha$ and $\wt{\alpha}$, respectively.
		Note that $|\alpha |\leq R$, $|\wt{\alpha}|\leq R$ and $|\alpha -\wt{\alpha}|\leq \| u_0-\wt{u}_0\|_{\FL^{s,p}}$.
		Let $v(t)=\Phi ^\alpha_0(t)[\tau _\alpha u_0]$ and $\wt{v}(t)=\Phi ^{\wt{\alpha}}_0(t)[\tau _{\wt{\alpha}}\wt{u}_0]$ be the corresponding solutions of \eqref{gauged-alpha} for $\al$ and $\wt{\al}$, respectively.
		From the local well-posedness of \eqref{gauged-alpha} in $\FL^{s,p}_0$, we have $\| v\|_{Z^{s,\frac12}_p(T)}$, $\| \wt{v}\|_{Z^{s,\frac12}_p(T)}\lesssim R$ for $T\sim (1+R)^{-\frac{k-1}{\theta}}$.
		Let $w=v-\wt{v}$. Then, we can use the nonlinear estimates in Proposition~\ref{prop:nonlinear} to show that
		\[ \| w\| _{Z^{s,\frac12}_p(T)}\leq C\| \tau_\alpha u_0 -\tau_{\wt{\alpha}}\wt{u}_0\|_{\FL^{s,p}}+CT^\theta R^{k-1}\big( |\alpha -\wt{\alpha}|+\| w\| _{Z^{s,\frac12}_p(T)}\big) .\]
		Replacing $T\sim (1+R)^{-\frac{k-1}{\theta}}$ if necessary, we have
		\begin{align*}
			\| w\| _{Z^{s,\frac12}_p(T)}&\lesssim \| \tau_\alpha u_0 -\tau_{\wt{\alpha}}\wt{u}_0\|_{\FL^{s,p}}+|\alpha -\wt{\alpha}| \\
			&\leq \| u_0 -\wt{u}_0\|_{\FL^{s,p}}+2|\alpha -\wt{\alpha}|.
		\end{align*}
		Therefore, 
		\begin{align*}
			\| \overline{\Phi}(\cdot )u_0-\overline{\Phi}(\cdot )\wt{u}_0\|_{Z^{s,\frac12}_p(T)}&=\| \tau_{-\alpha}v-\tau_{-\wt{\alpha}}\wt{v}\|_{Z^{s,\frac12}_p(T)}\\
			&\lesssim \| w\|_{Z^{s,\frac12}_p(T)}+|\alpha -\wt{\alpha}|\\
			&\lesssim \| u_0 -\wt{u}_0\|_{\FL^{s,p}}+|\alpha -\wt{\alpha}|\\
			&\lesssim \| u_0 -\wt{u}_0\|_{\FL^{s,p}}.\qedhere
		\end{align*}
	\end{proof}
			
	\begin{remark}\rm
		The same conclusions as in Proposition~\ref{prop:lwp} hold for the flow $\overline{\Phi}_N(t)$ of the truncated equation
		\begin{equation}\label{gauged-N}
			\left\{ \begin{aligned} &\dt u_N + \dx^3 u_N = k\PP_{\leq N} \Big[ \PP \big( (\PP_{\leq N}u_N)^{k-1}\big) \dx \PP_{\leq N}u_N\Big] ,\\
				&u_N|_{t=0}=u_0, \end{aligned} \right.
		\end{equation}
		with $u_0 \in \FL^{s,p}(\T)$,
		and the result is uniform in $N$.
		In fact, since the mean is still conserved for solutions to \eqref{gauged-N}, we can define $\Phi_{N,\alpha}(t)=\overline{\Phi}_N(t)|_{\FL^{s,p}_\alpha}$ by $\Phi_{N,\alpha}(t)=\tau _{-\alpha}\circ \Phi ^\alpha_{N,0}(t)\circ \tau _\alpha$, where $\Phi^\alpha_{N,0}$ is the flow of the equation for $v_N=\tau_\alpha u_N$ given by
		\begin{equation}\label{gauged-alpha-N}
			\left\{ \begin{aligned} &\dt v_N + \dx^3 v_N = k\PP_{\leq N} \Big[ \PP \big( (\PP_{\leq N}v_N+\alpha )^{k-1}\big) \dx \PP_{\leq N}v_N\Big] ,\\
				&v_N|_{t=0}=v_0, \end{aligned} \right. 
		\end{equation}
	with $v_0 = \tau_\al u_0 \in \FL^{s,p}_0(\T)$.
		The argument for \eqref{gauged-alpha-N} is analogous to that for \eqref{gauged-alpha} and uniform in $N$. 
	\end{remark}
		
		Now, the a.s. global well-posedness of \eqref{gauged} and of \eqref{gkdv} without prescribing mean zero and the invariance of the Gibbs measure follow from the approach in Section~\ref{sec:inv}.

	\section{Invariance in the threshold case}\label{ap:threshold}
	In this section, we outline the proof of Theorem~\ref{th:invariance} (b) when $k=5$ and $R = \|Q\|_{L^2(\R)}$.
	For $\dl \geq 0$, we introduce the notation 
	\begin{align*}
		F_\dl(u) & = e^{\frac{1}{6} \int_\T u^{6} \, dx} \1_{\{\| u \|_{L^2} \leq R - \dl\}},
	\end{align*}
	and let $\mu_\dl$ denote the Gibbs measure given by
	\begin{align*}
		d \mu_\delta (u) = Z_\delta^{-1} F_\dl(u) \ d\rho(u).
	\end{align*}
	From Theorem~\ref{th:focusing}, we have that
	\begin{align*}
		F_\dl(u) & \in L^q(d\rho), \quad 1\leq q < \infty, ~~\dl>0, \\
		F_0(u) & \in L^1(d\rho).
	\end{align*}
	In addition, fix $2<p<\infty$ and set $s=s_1= \frac12(s_*(p)+1-\frac1p)$, $\mathcal{B}$ the Borel $\sigma$-field on~$\FL^{s,p}(\T)$.
	
	We will use the following approximation property between $\mu_\dl$ and $\mu_0$. Note that $0\leq F_\dl(u) \leq F_0(u)$ for $\dl>0$ and $F_\dl (u)\to F_0(u)$ for any $u \in \FL^{s,p}(\T)$. Therefore, by the dominated convergence theorem, we have that $F_\dl \to F_0$ in $L^1(d\rho)$, from which we obtain
	\begin{equation}\label{conv0}
		\sup _{A\in \mathcal{B}}\big| \mu_\delta (A) - \mu_0(A)\big| \to 0\qquad \text{ as } \dl \to 0.
	\end{equation}
	For $\dl>0$, the argument in Section~\ref{sec:inv} holds. Consequently, by Theorem~\ref{th:invariance}, we can construct a set $\Sigma_\dl$ of full $\mu_\dl$-measure and a global-in-time flow $\Phi_\dl(t)$ for \eqref{gauged} on $\Sigma_\dl$. 
	
	Now, we establish Theorem~\ref{th:invariance} in the threshold case $\dl=0$. We first define the following set and solution map
	\[ \Sigma_0 :=\bigcup _{n\geq 1}\Sigma _{\frac1n},\qquad \Phi_0 (t):=\Phi _{\frac1n} (t)\quad \text{on $\Sigma_{\frac1n}$}, \ n \in \NB.\]
	From uniqueness of local-in-time solutions in $Z^{s,\frac12}_p$, $\Phi_0(t)$ is well-defined on $\Sigma_0$. Similarly, the flow property \eqref{flowproperty} extends to $\Phi_0(t)$ on $\Sigma_0$ from that of $\Phi_\frac1n(t)$ on $\Sigma_\frac1n$. In addition, $\Sigma_0$ is $\mu_0$-measurable since it is a countable union of Borel sets of $\FL^{s,p}(\T)$. Furthermore, the $\mu_0$-measurability of $\Phi_0(t)$ follows from its continuity on $\Sigma_0$ in the topology induced by $\FL^{s,p}(\T)$, the proof of which boils down to the argument for a fixed $\delta >0$ presented in Section~\ref{sec:inv}.
	
	It only remains to show that $\mu_0(\Sigma_0)= 1$ 
	and $\mu_0$ is invariant under the solution map $\Phi_0(t)$. 	
	For any $n\in\NB$, we have 
	\[ \mu _\frac1n (\Sigma_0 ^c)=\mu _\frac1n \Big( \bigcap _{n\geq 1}\Sigma _{\frac1n}^c\Big) \leq \mu _\frac1n (\Sigma_\frac1n ^c)=0,\]
and for any measurable $A\subset \Sigma_0$, $t\in\R$, $n\in\NB$, we have
	\begin{align*}
		\mu_\frac1n \big( \Phi_0(t)A \big) & = \mu_\frac1n \big( (\Phi_0(t)A) \cap \Sigma_\frac1n \big) = \mu_\frac1n \big( \Phi_\frac1n(t) (A\cap\Sigma_\frac1n) \big) = \mu_\frac1n (A \cap \Sigma_\frac1n) = \mu_\frac1n(A),
	\end{align*}
	from the invariance of $\mu_\frac1n$ under $\Phi_\frac1n(t)$ and the fact that $\Sigma_\frac1n$ has full $\mu_\frac1n$-measure. By taking the limit as $n\to\infty$ and using the convergence in \eqref{conv0}, we conclude that  $\mu _0(\Sigma_0 ^c)=0$ and that $\mu_0(\Phi_0(t)A) = \mu_0(A)$.
	
	This concludes the proof of Theorem~\ref{th:invariance} (b) in the case $k=5$ and $R = \|Q\|_{L^2(\R)}$. Although we have considered the mean zero case, the above argument also works for the problem without the mean zero condition (see Remark~\ref{rm:mean} and Appendix~\ref{ap:mean}). Moreover, the corresponding results for gKdV \eqref{gkdv} can be verified by the same argument as for the non-threshold case $R < \|Q\|_{L^2(\R)}$ (see the proof of Theorem~\ref{th:invariance_gkdv} in Section~\ref{sec:inv}).

	\section{Proof of Lemmas~\ref{lm:uniform} and \ref{lm:approx}} \label{ap:measure}
	
	First, we prove Lemma~\ref{lm:uniform} using Lemma~\ref{lm:tail}.
	\begin{proof}[Proof of Lemma~\ref{lm:uniform}]
		From Theorem~\ref{th:lwp}, we know that for $u_0 \in \FL^{s,p}(\T)$ with $\|u_0\|_{\FL^{s,p}} \leq K$, the corresponding solution $u_N$ to \eqref{truncated2} satisfies
		\begin{align*}
			\big\| u_N(t) \big\|_{\FL^{s,p}} \leq C_0 K,
		\end{align*}
		for $|t| \leq \dl \sim K^{-\gamma}$, $\gamma =\frac{k-1}{\theta}>0$ with $\theta$ given in Proposition~\ref{prop:nonlinear}, where $C_0>0$ does not depend on $s$. Note also that the constants can be taken uniformly in $N$. We want to establish a bound on $u_N(t)$ for all $|t| \leq T$. Let $[x]$ denote the integer part of a real number $x$ and define
		\begin{equation*}
			\Omega_N^s (T, \eps, A) = \bigcap_{j= - \big[\frac{T}{\dl}\big]}^{\big[\frac{T}{\dl}\big]} \Phi_N(j\dl) \Big( \Big\{ \|u_0\|_{\FL^{s,p}} \leq K \Big\}\Big),
		\end{equation*}
		where 
	$K=AC_s\big( \log \frac{T}{\eps}\big)^\frac12$ with a constant $C_s>0$ to be chosen later. 

		We start by showing (a). Let $B_K = \{ \|u_0\|_{\FL^{s,p}} \leq K\} $. From the uniqueness of solution to \eqref{truncated2} in each time interval $[j\dl, (j+1)\dl]$, we see that the solution map is invertible and
		$$\big[ \Phi_N(j\dl) (B_K) \big]^c = \Phi_N(j\dl)(B_K^c).  $$
		Consequently, 
		\begin{align*}
				\mu_N \big( [\Omega_N^s (T, \eps, A)]^c\big) &= \mu_N \bigg( \bigcup_{j= - \big[ \frac{T}{\dl}\big]}^{\big[\frac{T}{\dl}\big]} \Phi_N(j\dl) (B_K^c)  \bigg) \\&\leq \sum_{j= - \big[ \frac{T}{\dl}\big]}^{\big[\frac{T}{\dl}\big]} \mu_N \big( \Phi_N(j\dl) (B_K^c) \big) = 2 \Big[\frac{T}{\dl}\Big] \mu_N(B_K^c)
		\end{align*}
		from the invariance of $\mu_N$ under the flow $\Phi_N(t)$ of \eqref{truncated2} in Proposition~\ref{inv_trunc}. From Cauchy-Schwarz inequality, Lemma~\ref{lm:tail} and \eqref{FNR}, we have
		\begin{align*}
			\mu_N \big( [\Omega_N^s (T, \eps, A)]^c\big) \les \frac{T}{\dl} \int_{B_K^c} F_{N}(u) \ d\rho(u) \les \frac{T}{\dl} \|F_{N} \|_{L^2(d\rho)} \rho(B_K^c)^\frac12 \les \frac{T}{\dl} e^{-cK^2} \sim T K^\gamma e^{-cK^2}.
		\end{align*}
		Since $\log \frac{T}{\eps}\geq \log 2$ by the assumption, there exists $C_s>0$ such that if $K\geq C_s \big(\log \tfrac{T}{\eps}\big)^\frac12$, then $TK^\gamma e^{- c K^2 } \leq Te^{-\frac{c}{2} K^2}\ll \eps$. Hence, the above estimate, for $K=AC_s\big( \log \frac{T}{\eps}\big)^\frac12$ with $A\geq 1$ and such a constant $C_s$, ensures that $\mu_N \big( [\Omega_N^s (T, \eps, A)]^c\big) < \eps$, establishing (a).
		%
		With the invertibility of the solution map, (b) is a consequence of the local bound mentioned at the beginning, and (c) immediately follows from the definition of $\Omega_N^s (T, \eps, A)$. 
	\end{proof}
	
	Next, we derive Lemma~\ref{lm:approx} from the local theory.
	\begin{proof}[Proof of Lemma~\ref{lm:approx}]
		We only consider the positive time direction. We start by showing (a).
		Let $\mathcal{N}(u):=k\mathbf{P}(u^{k-1})\partial _xu$.
		By the local theory, with $\delta \sim (1+K)^{-\gamma}$ the solution $u_N$ to \eqref{truncated2} satisfies
		\begin{equation}\label{est:u_N}
			\| u_N\| _{Z^{\s,\frac12}_p([j\delta ,(j+1)\delta])}\leq C_2K,\qquad 0\leq j<[\tfrac{T}{\delta}]
		\end{equation}
		for some $C_2>0$.
		Note that the solution to \eqref{truncated2} in $C\big([-T,T];\F L^{\s,p}(\T)\big)$ coincides on each interval $[j\delta ,(j+1)\delta ]$ with the solution constructed by the iteration argument in $Z^{\s,\frac12}_p$, and also that
		\[ \mathbf{P}_{\leq N}u_N(t)=S(t-j\delta )\mathbf{P}_{\leq N}u_N(j\delta )+\int _{j\delta }^tS(t-t')\mathbf{P}_{\leq N}\mathcal{N}(\mathbf{P}_{\leq N}u_N(t'))\,dt', \quad t\in [j\delta ,(j+1)\delta ] \]
		for any $0\leq j<[\frac{T}{\delta}]$.
		We want to construct a solution $u$ to 
		\[ u(t)=S(t-j\delta )u(j\delta)+\int _{j\delta }^tS(t-t')\mathcal{N}(u(t'))\,dt', \qquad t\in [j\delta ,(j+1)\delta ]\]
		for each $j=0,1,\dots ,[\frac{T}{\delta}]-1$.
		This amounts to constructing $w(t):=u(t)-\mathbf{P}_{\leq N}u_N(t)$, which solves
		\begin{multline}\label{eq:w}
			w(t)=\Xi_j[w](t):=S(t-j\delta )w(j\delta )
			+\int _{j\delta}^tS(t-t')\mathbf{P}_{>N}\mathcal{N}(\mathbf{P}_{\leq N}u_N)(t')\,dt'
			\\
			+\int _{j\delta}^tS(t-t')\big\{ \mathcal{N}(w+\mathbf{P}_{\leq N}u_N)-\mathcal{N}(\mathbf{P}_{\leq N}u_N)\big\} (t')\,dt'.
		\end{multline}
		By the nonlinear estimates in $Z^{s, \frac12}_{p}$ and $Z^{\s,\frac12}_p$, together with \eqref{est:u_N}, we have
		\begin{align*}
			&\| \Xi _j[w]\| _{Z^{s, \frac12}_p([j\delta ,(j+1)\delta ])}\\
			&\quad \leq C_0\| w(j\delta )\| _{\F L^{s ,p}}+C_1\delta ^\theta \Big( \| w\| _{Z^{s, \frac12}_p([j\delta ,(j+1)\delta ])} +C_2K\Big) ^{k-1}\| w\| _{Z^{s, \frac12}_p([j\delta ,(j+1)\delta ])} \\
			&\qquad +C_1N^{-(\s-s)}\delta ^\theta (C_2K)^k,\\
			&\| \Xi _j[w]-\Xi _j[\tilde{w}]\| _{Z^{s, \frac12}_p([j\delta ,(j+1)\delta ])}\\
			&\quad \leq C_1\delta ^\theta \Big( \| w\| _{Z^{s, \frac12}_p([j\delta ,(j+1)\delta ])} +\| \tilde{w}\| _{Z^{s, \frac12}_p([j\delta ,(j+1)\delta ])} +C_2K\Big) ^{k-1}\| w-\tilde{w}\| _{Z^{s, \frac12}_p([j\delta ,(j+1)\delta ])} ,
		\end{align*}
		for some $C_0>0$ and $C_1=C_1(s,p)>0$. 
		Therefore, taking smaller $\delta \sim_{s,p} (1+K)^{-\gamma}$ if necessary
, we can show that $\Xi _j$ is a contraction on 
		\[ \big\{ w\in Z^{s, \frac12}_p([j\delta ,(j+1)\delta ]) : \| w\| _{Z^{s, \frac12}_p([j\delta ,(j+1)\delta ])}\leq 2C_0\| w(j\delta )\| _{\F L^{s ,p}}+N^{-(\s-s)}K \big\} \]
		as long as
		\[ \| w(j\delta )\| _{\F L^{s ,p}}\leq K.\]
		Starting from $\| w(0)\| _{\F L^{s ,p}}\leq N^{-(\s-s)}K$, we obtain the solution $w$ to \eqref{eq:w} on $[j\delta ,(j+1)\delta ]$ with 
		\[ \| w((j+1)\delta )\| _{\F L^{s ,p}}\leq \tilde{C}_0^{j+1}N^{-(\s- s)}K,\qquad j=0,1,\dots, \Big[ \frac{T}{\dl}\Big]-1 ,\]
		for some $\tilde{C}_0>0$.
		In particular, the solution can be extended up to $t=T$ if $N$ satisfies
		\[ N^{\s-s} \geq e^{C_3(1+K)^{\gamma}T}~(\geq \tilde{C}_0^{[\frac{T}{\delta}]})\]
		for some $C_3=C_3(s,p)>0$.
		Consequently, for $N$ large enough, we obtain
		\[ \max _{0\leq t\leq T}\| w(t)\| _{\F L^{s ,p}}\leq \max _{0\leq j<[\frac{T}{\delta }]}\tilde{C}_0^{j+1}N^{-(\s - s)}K\leq e^{C_3(1+K)^{\gamma}T}N^{-(\s -s )}K.\]
		The estimate follows by further imposing $N \geq N_0$ where $N_0 \sim \exp\big( \frac{CK^\gamma T}{\s -s} \big)$.
		
		To establish (b), note that we can also write $w(t)$ as follows
		\begin{multline*}
			w(t)=\tilde{\Xi}_j[w](t):=S(t-j\delta )w(j\delta )
			+\int _{j\delta}^tS(t-t')\mathbf{P}_{>N}\mathcal{N}(u)(t')\,dt'
			\\
			+\int _{j\delta}^tS(t-t') \PP_{\leq N}\big\{ \mathcal{N}(u)-\mathcal{N}(u-w)\big\} (t')\,dt'.
		\end{multline*}
	The estimate then follows from the same arguments as for (a).
	\end{proof}
	
\end{appendix}

\begin{ack}\rm 
	The authors would like to thank Tadahiro Oh for suggesting the problem and for his continuous support throughout the work. 
	They are also grateful to Terence Tao and Rowan Killip for giving them various constructive suggestions on how to establish the invariance of the Gibbs measure for the original gKdV equation from that for the gauged one, which is one of the key steps in establishing the main result.
	In addition, they deeply appreciate a number of  comments given by the anonymous referees, which are most useful for ensuring the correctness and enhancing the intelligibility of certain arguments.
	A.C. was supported by the Maxwell Institute Graduate School in Analysis and its Applications, a Centre for Doctoral Training funded by the UK Engineering and Physical Sciences Research Council (grant EP/L016508/01), the Scottish Funding Council, Heriot-Watt University and the University of Edinburgh. N.K. was supported by JSPS (Grant-in-Aid for Young Researchers (B) no.~16K17626). The authors also acknowledge support from the European Research Council (grant no. 637995 ``ProbDynDispEq'' and grant no.~864138 ``SingStochDispDyn'').

\end{ack}


\end{document}